\numberwithin{equation}{section}
\DeclareMathOperator{\Span}{span}
\DeclareMathOperator{\supp}{supp}
\DeclareMathOperator*{\esssup}{ess\,sup}
\newtheorem{theorem}{Theorem}[section]
\newtheorem{lemma}[theorem]{Lemma}
\newtheorem{proposition}[theorem]{Proposition}
\newtheorem{corollary}[theorem]{Corollary}
\theoremstyle{definition}
\newcommand{\tc}{\,:\,}
\newcommand\TT{{\mathbb{T}}}
\newcommand\RR{{\mathbb{R}}}
\newcommand\CC{{\mathbb{C}}}
\newcommand\NN{{\mathbb{N}}}
\newcommand\ZZ{{\mathbb{Z}}}
\newcommand{\cI}{\mathcal{I}}          
\newcommand{\cS}{\mathcal{S}}          
\newcommand{\vell}{l}                 
\newcommand{\sfera}{\mathbb{S}}   
\newcommand{\equat}{\mathbb{E}}   
\newcommand{\opL}{\mathcal{L}}   
\newcommand{\opG}{\mathcal{G}}   
\newcommand{\dist}{\varrho}   
\newcommand{\weight}{\varpi}  
\newcommand{\meas}{\sigma}    
\newcommand{\gr}{\mathrm{gr}}
\newcommand{\bigO}{\mathcal{O}}    
\newcommand{\IS}{I_\sfera}      
\newcommand{\bE}{\mathbf{E}}    
\newcommand{\bM}{\mathbf{M}}
\newcommand{\Kern}{\mathcal{K}}  
\newcommand{\Sob}[2]{L^{#1}_{#2}}  
\newcommand{\LegF}{\mathrm{P}}     
\newcommand{\SpH}{Y}               
\newcommand{\tSpH}{\widetilde{Y}}  
\DeclareMathOperator{\sgn}{sgn}
\DeclareFontFamily{U}{matha}{\hyphenchar\font45}
\DeclareFontShape{U}{matha}{m}{n}{
      <5> <6> <7> <8> <9> <10> gen * matha
      <10.95> matha10 <12> <14.4> <17.28> <20.74> <24.88> matha12
      }{}
\DeclareSymbolFont{matha}{U}{matha}{m}{n}
\DeclareFontFamily{U}{mathx}{\hyphenchar\font45}
\DeclareFontShape{U}{mathx}{m}{n}{
      <5> <6> <7> <8> <9> <10>
      <10.95> <12> <14.4> <17.28> <20.74> <24.88>
      mathx10
      }{}
\DeclareSymbolFont{mathx}{U}{mathx}{m}{n}
\DeclareMathDelimiter{\vvvert}{0}{matha}{"7E}{mathx}{"17}
\newcommand{\spnt}[2]{\left\lfloor #1, #2 \right\rceil}
\begin{document}

\title[A sharp multiplier theorem on the Grushin sphere]{From refined estimates for spherical harmonics to a sharp multiplier theorem on the Grushin sphere}
\author{Valentina Casarino}
\address{Universit\`a degli Studi di Padova\\Stradella san Nicola 3 \\I-36100 Vicenza \\ Italy}
\email{valentina.casarino@unipd.it}
\author{Paolo Ciatti}
\address{Universit\`a degli Studi di Padova\\Via Marzolo 9 \\I-35100 Padova \\ Italy}
\email{paolo.ciatti@unipd.it}
\author{Alessio Martini}
\address{School of Mathematics \\ University of Birmingham \\ Edgbaston \\ Birmingham \\ B15 2TT \\ United Kingdom}
\email{a.martini@bham.ac.uk}

\newcommand{\acknowledgments}{\section*{Acknowledgments}
The authors are very grateful to Fulvio Ricci for bringing this problem to their attention.

A substantial part of this work was developed while the third-named author was visiting the DTG of the Universit\`a di Padova as a recipient of a ``Visiting Scientist 2016'' grant; he gratefully thanks the Universit\`a di Padova for the support and ospitality. This work was partially supported by the EPSRC Grant ``Sub-Elliptic Harmonic Analysis'' (EP/P002447/1), the Progetto GNAMPA 2017 ``Analisi armonica e teoria spettrale di Laplaciani", the Progetto GNAMPA 2016 ``Calcolo funzionale per operatori subellittici su variet\`a'', and the Progetto PRIN 2015 ``Variet\`a reali e complesse: geometria, topologia e analisi armonica''. The authors are members of the Gruppo Nazionale per l'Analisi Matematica, la Probabilit\`a e le loro Applicazioni (GNAMPA) of the Istituto Nazionale di Alta Matematica (INdAM).}

\keywords{Grushin sphere, spectral multipliers, spherical harmonics, associated Legendre functions, sub-Laplacian, sub-Riemannian geometry.}
\subjclass[2010]{
33C55, 
42B15  
 (primary);
53C17, 
58J50  
(secondary).
}

\begin{abstract}
We prove a sharp multiplier theorem of Mihlin--H\"ormander type for the Grushin operator on the unit sphere in $\RR^3$, and a corresponding boundedness result for the associated Bochner--Riesz means. The proof hinges on precise pointwise bounds for spherical harmonics.
\end{abstract}

\maketitle

\section{Introduction}
Since its introduction in \cite{Grushin}, the so-called Grushin operator of step $r+1$, defined in the bidimensional setting as
\begin{equation}\label{eq:plane_grushin}
\opG_r=-\left(\frac{\partial}{\partial u}\right)^2-u^{2r} \left(\frac{\partial}{\partial v}\right)^2,
\end{equation}
where $r$ is a positive integer and $(u,v)$ are coordinates on $\RR^2$, received considerable attention as a prototypical example of a
degenerate elliptic, hypoelliptic operator.
The $(r+1)$-step Grushin plane, that is $\RR^2$ with the Lebesgue measure and the control distance  associated to $\opG_r$,
turns out to be  a space of homogeneous type in the sense of Coifman and Weiss, and many classical problems in harmonic analysis, like restriction estimates, spectral multipliers theorems, Hardy inequalities, heat kernel bounds, $L^p$ estimates for the wave equation, and properties of Hardy and BMO spaces
have been successfully addressed in the last years in this framework, also in higher dimension and for fractional values of $r$
(see, e.g., \cite{Garofalo,Meyer,RS, MSi, JST,CS,  JT, MMu, BFI, CO,  DzJ,LRY} and references therein).

Despite the prototypical nature of \eqref{eq:plane_grushin}, the study of analogous problems for Grushin-type operators on more general manifolds often proves to be challenging and not as many results are available. In a few recent works \cite{BFIuno, BPS, Pesenson}, some attention has been given to 2-step Grushin-type operators on the unit sphere
\[
\sfera=\{z\in \RR^3 \tc z_1^2 + z_2^2 + z_3^2 = 1 \}
\]
in $\RR^3$.
The operator studied in \cite{BPS} is self-adjoint with respect to a measure on the sphere that is singular along the equator $\equat = \{z \in \sfera \tc z_3 = 0\}$ and can be thought of the Laplace--Beltrami operator for a certain almost-Riemannian structure on $\sfera$ \cite{BoL}. Instead \cite{BFIuno,Pesenson} consider, in analogy with the analysis extensively carried out on the Grushin plane, an operator $\opL$ that is self-adjoint with respect to the standard rotation-invariant measure $\meas$ on $\sfera$.

Let us give some more details on the latter perspective, which is the one that we adopt in this paper.
The sphere $\sfera$ has a natural, rotation-invariant Riemannian structure induced by the ambient space $\RR^3$. The vector fields $Z_1,Z_2,Z_3$, defined by
\[
Z_1 = z_3 \frac{\partial}{\partial z_2} -z_2 \frac{\partial}{\partial z_3}  , \qquad
Z_2 = z_1 \frac{\partial}{\partial z_3} -z_3 \frac{\partial}{\partial z_1} , \qquad
Z_3 = z_2 \frac{\partial}{\partial z_1} -z_1 \frac{\partial}{\partial z_2} ,
\]
span at each point of $\sfera$ the tangent space to $\sfera$ and
\[
\Delta = -(Z_1^2+Z_2^2+Z_3^2)
\]
is the Laplace--Beltrami operator on $\sfera$. Since
$[Z_1,Z_2] = Z_3$,
the system of vector fields $\{Z_1,Z_2\}$ is bracket-generating and determines a sub-Riemannian structure on $\sfera$.
We call Grushin sphere the sphere $\sfera$, equipped with this sub-Riemannian structure and with the standard Riemannian measure $\meas$, and spherical Grushin operator the self-adjoint sub-elliptic operator
\[
\opL=-(Z_1^2+Z_2^2).
\]
Note that $\opL$ degenerates on the equator $\equat$, since $\Span\{Z_1,Z_2\}$ is one-dimensional there, whereas $\opL$ is elliptic on $\sfera \setminus \equat$. Indeed, in suitable spherical coordinates $(\theta,\varphi)$,
\begin{equation}\label{eq:sph_grushin_coords}
\opL f = -\frac{1}{\cos\theta} \frac{\partial}{\partial \theta} \left( \cos\theta \frac{\partial}{\partial \theta} f\right)
- {\tan^2 \theta}\,\left(\frac{\partial}{\partial \varphi}\right)^2 f,
\end{equation}
where the singularity $\theta = 0$ corresponds to the equator $\equat$.

The classical spherical harmonics $\SpH_{\ell,m}$ ($\ell\in \NN$, $-\ell\leq m\leq \ell$) with pole $(1,0,0)$, usually studied as eigenfunctions of the Laplace--Beltrami operator $\Delta$, are eigenfunctions of the Grushin operator $\opL$ as well:
\[
\opL \SpH_{\ell,m} = \lambda_{\ell,m} \SpH_{\ell,m},
\]
where $\lambda_{\ell,m} = \ell(\ell+1)-m^2$; therefore they can be used to describe the spectral decomposition and define a functional calculus for $\opL$.
Here we are interested in $L^p$ boundedness properties of the multiplier operator
\begin{equation}\label{eq:delFL}
F(\opL)= \sum_{(\ell,m) \in \NN \times \ZZ \tc |m| \leq \ell} F(\lambda_{\ell,m}) \,\pi_{\ell, m},
\end{equation}
initially defined on $L^2(\sfera)$, where $F : \RR \to \CC$ is a bounded Borel function
and $\pi_{\ell,m}$ is the orthogonal projection operator mapping $L^2(\sfera)$ onto $\CC \SpH_{\ell,m}$.
More precisely, we seek minimal differentiability requirements which, imposed on $F$, guarantee the boundedness of $F(\opL)$ on $L^p(\sfera)$ for $p \neq 2$. This is a classical problem in harmonic analysis, which has been studied for many operators and eigenfunction expansions, though sharp results are only known in a few cases.

Choose a nontrivial cut-off function $\eta \in C^\infty_c((0,\infty))$. For all $q \in [1,\infty]$ and $s \in [0,\infty)$, let $\Sob{q}{s}(\RR)$ denote the $L^q$ Sobolev space on $\RR$ of order $s$. Our main result is the following multiplier theorem of Mihlin--H\"ormander type for $\opL$.

\begin{theorem}\label{thm:mainmain}
Let $d = 2$ be the topological dimension of $\sfera$. Let $s > d/2$.
\begin{enumerate}[label=(\roman*)]
\item\label{en:mainmain_compact} For all continuous functions $F : \RR \to \CC$ supported in $[1/4,1]$,
\begin{equation}\label{eq:l1bdd}
\sup_{t>0} \| F(t \opL) \|_{L^1(\sfera) \to L^1(\sfera)} \leq C_s \, \|F\|_{\Sob{2}{s}}.
\end{equation}
\item\label{en:mainmain_mh} For all bounded Borel functions $F : \RR \to \CC$ such that $F|_{(0,\infty)}$ is continuous,
\begin{equation}\label{eq:mainmain_mh_wt11}
\|F(\opL) \|_{L^1(\sfera) \to L^{1,\infty}(\sfera)} \leq C_s \, \sup_{t \geq 0} \| F(t \cdot) \, \eta \|_{\Sob{2}{s}}.
\end{equation}
Hence, whenever the right-hand side of \eqref{eq:mainmain_mh_wt11} is finite, the operator $F(\sqrt{\opL})$ is of weak type $(1,1)$ and bounded on $L^p(\sfera)$ for all $p \in (1,\infty)$.
\item Let $p \in [1,\infty]$. If $\delta > (d-1)|1/2-1/p|$, then the Bochner--Riesz means $(1-t\opL)_+^\delta$ of order $\delta$ associated with $\opL$ are bounded on $L^p(\sfera)$ uniformly in $t \in (0,\infty)$.
\end{enumerate}
\end{theorem}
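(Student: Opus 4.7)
The natural plan is to prove part \ref{en:mainmain_compact} first and then deduce \ref{en:mainmain_mh} and (iii) from it by standard reductions. For \ref{en:mainmain_mh}: given \ref{en:mainmain_compact}, one invokes the Calder\'on--Zygmund decomposition on the space of homogeneous type $(\sfera,\dist,\meas)$, where $\dist$ denotes the control distance associated to $\opL$, together with the heat-kernel and finite-propagation-speed estimates for $\opL$ which are available on any compact manifold carrying a smooth sub-Laplacian. For (iii): one decomposes $(1-x)_+^\delta$ dyadically and applies \ref{en:mainmain_mh} to each piece; the endpoints $p\in\{1,\infty\}$ follow from the summability of the resulting $L^1$ operator norms once $\delta>(d-1)/2$, and the intermediate values of $p$ by interpolation.

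The heart of the argument is therefore the single-scale estimate \ref{en:mainmain_compact}. My plan is to deduce \eqref{eq:l1bdd} from a weighted Plancherel-type inequality for the integral kernel
\[
K_t(z,z') = \sum_{(\ell,m)\,:\,|m|\leq\ell} F(t\lambda_{\ell,m})\,\SpH_{\ell,m}(z)\,\overline{\SpH_{\ell,m}(z')}
\]
of $F(t\opL)$. Precisely, I aim at an estimate of the form
\begin{equation*}
\esssup_{z'\in\sfera}\int_\sfera |K_t(z,z')|^2 (1+t^{-1/2}\dist(z,z'))^{2s}\,d\meas(z) \leq C_s\,t^{-d/2}\|F\|_{\Sob{2}{s}}^2
\end{equation*}
for all $s>d/2=1$ and all $F$ supported in $[1/4,1]$. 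Once such a bound is available, \eqref{eq:l1bdd} follows at once by Cauchy--Schwarz, since the dual weight $(1+t^{-1/2}\dist(z,z'))^{-2s}$ is integrable in $z$ with integral at most $C\,t^{d/2}$, thanks to the topological-dimensional volume bound for the Grushin balls.

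The main obstacle, and the reason why sharp results of this kind are delicate, is that the homogeneous dimension of the sub-Riemannian structure on $\sfera$ is $Q=3$, strictly larger than the topological dimension $d=2$; hence the na\"ive approach based solely on Gaussian heat-kernel bounds and the abstract Cowling--Sikora machinery would produce the threshold $s>Q/2=3/2$, rather than the sharp $s>1$. To gain the missing half derivative one must exploit information beyond metric volume bounds, namely the fine joint spectral behaviour of $\opL$ and the rotation vector field $-i\partial_\varphi$ under the eigenbasis $\{\SpH_{\ell,m}\}$. The key input here is the set of refined pointwise bounds on the associated Legendre functions, equivalently on $\SpH_{\ell,m}$, announced in the abstract. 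Inserting those bounds into the spectral expansion of $K_t$, summing in $m$ for each fixed $\ell$ and then in $\ell$, and splitting the $z$-integral according to whether $z$ lies close to or far from the equator $\equat$, I expect the weighted Plancherel estimate to fall out; the genuinely tricky book-keeping will be in the transition regime where $|m|$ is comparable to $\ell$ and $z$ sits in the caustic zone of $\SpH_{\ell,m}$, that is, precisely where the refined eigenfunction estimates supersede the classical Hörmander--Sogge sup-norm bounds.
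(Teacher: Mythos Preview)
Your overall plan and your diagnosis of the $Q=3$ versus $d=2$ obstruction are correct, but the specific weighted Plancherel estimate you write down is false, and this is a genuine gap rather than a technicality.

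Take $z'\in\equat$ and $F=\chi_{[1/4,1]}$. Already at $s=0$ your displayed left-hand side equals $\sum_{\lambda_{\ell,m}\in[t^{-1}/4,\,t^{-1}]}|\SpH_{\ell,m}(z')|^2$, and this is of exact order $t^{-3/2}=V(z',t^{1/2})^{-1}$, not $t^{-d/2}=t^{-1}$ (see Propositions~\ref{prp:indici_alti}--\ref{prp:indici_bassi} with $\alpha=0$, or simply test against the highest-weight harmonics \eqref{eq:vettore-peso-massimo}). Inserting the weight $(1+t^{-1/2}\dist)^{2s}\geq1$ for $s>0$ only increases the left-hand side. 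So the half-derivative gain \emph{cannot} come from a purely distance-weighted $L^2$ estimate: the refined bounds on $\SpH_{\ell,m}$ do not shrink the on-diagonal size of the kernel at equatorial points, because that size genuinely reflects the local homogeneous dimension.

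What the paper does instead is introduce a second, non-metric weight $\weight_r(\spnt{\theta}{\varphi},\spnt{\theta'}{\varphi'})=|\theta|/\max\{r,|\theta'|\}$, measuring displacement \emph{transverse to the equator}. The refined spherical-harmonic bounds go into proving a Plancherel estimate carrying powers of $\weight_r$, with the geometrically correct factor $V(z',r)^{-1}$ on the right (Proposition~\ref{prp:weighted_plancherel}); this is interpolated against the heat-kernel-based distance-weighted estimate to control $\vvvert\Kern\vvvert_{2,\beta,\alpha,r}$ by $\|F\|_{\Sob{2}{\beta+\epsilon}}$ for any $\alpha<1/2$. The Cauchy--Schwarz step then works because the \emph{two-weight} integral $\int_\sfera(1+\dist/r)^{-2\beta}(1+\weight_r)^{-2\alpha}\,d\meas$ is $\lesssim V(z',r)$ once $2\alpha+2\beta>Q$ and $2\alpha<1$ (Lemma~\ref{lem:weight}); choosing $\beta$ just above $d/2=1$ and $\alpha$ just below $1/2$ yields the sharp threshold $s>d/2$. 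Your proposal omits the $\weight_r$ mechanism entirely and therefore, as written, cannot close the gap between $Q/2$ and $d/2$.
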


Note that the supremum in the right-hand side of \eqref{eq:mainmain_mh_wt11} includes $t=0$ and therefore majorizes $|F(0)|$; moreover, the finiteness of the supremum depends only on $F$ and not on the choice of the cut-off $\eta$. We remark that, for a Borel function $F : \RR \to \CC$, the finiteness of the right-hand side of \eqref{eq:l1bdd} implies, by Sobolev's embedding, that $F$ coincides Lebesgue-a.e.\ with a continuous function; the continuity assumption in part \ref{en:mainmain_compact} of Theorem \ref{thm:mainmain} is meant to specify that it is this continuous representative of $F$ that is to be used to define the operators $F(t\opL)$. A similar observation applies to the continuity assumption in part \ref{en:mainmain_mh}.

The above multiplier theorem is sharp, in the sense that the condition $s > d/2$ on the order of smoothness $s$ cannot be weakened. The optimality follows from a now standard transplantation argument, originally due to Mitjagin \cite{Mi}, mainly known from a paper by Kenig, Stanton and Tomas \cite{KST}.
Since $\opL$ is elliptic away from the equator, by means of Mitjagin's argument we deduce the sharpness of Theorem \ref{thm:mainmain} from the sharpness of the analogous results for the Laplace operator on $\RR^d$.

It should be noted that the analogue of Theorem \ref{thm:mainmain} where $\opL$ is replaced by the Laplace--Beltrami operator $\Delta$
follows from the classical results of \cite{SeeSo,Sogge4}, that hold more generally for elliptic operators on compact manifolds. On the other hand, the Grushin operator $\opL$ is not elliptic on the whole sphere, which makes its analysis particularly arduous, especially when it comes to sharp multiplier theorems.

Indeed the lack of ellipticity of the Grushin operator $\opL$ is related to the fact that the ``local dimension'' $Q$ associated to the sub-Riemannian structure at each point of the equator is larger \cite{FePh} than the topological dimension $d$ of the sphere (in this case $Q=3$), and it would be relatively straightforward \cite{He,DOS} to prove a weaker result than Theorem \ref{thm:mainmain}, with smoothness condition $s > Q/2$, and the $L^2$ Sobolev norms in \eqref{eq:l1bdd}-\eqref{eq:mainmain_mh_wt11} replaced by $L^\infty$ Sobolev norms.

The mismatch between the natural dimensional parameter associated to the geometry of a sub-elliptic operator and the optimal smoothness condition in a multiplier theorem of Mihlin--H\"ormander type was first noted in the case of a sub-Laplacian on the Heisenberg groups \cite{He_heis,MuSt}, and has been since then the object of a number of studies. Despite many recent developments, especially in the case of homogeneous sub-Laplacians on $2$-step stratified groups (see \cite{MMuGAFA} and references therein), the problem of determining the optimal smoothness condition remains widely open: sharp results are known only in particular cases, and there appears to be no optimal result of the same level of generality of those available for elliptic operators.
The present work can therefore be considered as part of an ongoing effort in the investigation of sub-elliptic operators, whereby progress on the general case is sought by analysing particularly significant instances.

In these respects, the spherical Grushin operator $\opL$ presents a combination of features that makes its study especially interesting. Indeed, differently from the aforementioned homogeneous sub-Laplacians, the operator $\opL$ has discrete spectrum, and moreover it is not group-invariant, in the sense that there is no transitive group action on $\sfera$ preserving $\opL$. Sharp multiplier theorems have already been obtained for sub-elliptic operators with discrete spectrum \cite{CoS,CoKS,CCMS,ACMMu}, but the known results always involve group-invariant operators. Conversely, \cite{MSi,MMu} give a sharp multiplier theorem for the $2$-step Grushin operator $\opG_1$ and its higher-dimensional versions, which are not group-invariant, but have continuous spectrum. The combination of discrete spectrum and lack of group-invariance is therefore a novel feature of the result discussed here, which is reflected in the technical challenges of its proof.

\medskip

More precisely, the proof of Theorem \ref{thm:mainmain} follows the general pattern used in previous works \cite{He_heis,MuSt,CoS,MSi}, which essentially reduces the sharp multiplier theorem to a so-called ``weighted Plancherel-type estimate''. In the case of the spherical Grushin operator $\opL$, the proof of the weighted Plancherel-type estimate requires very precise pointwise bounds for the spherical harmonics $\SpH_{\ell,m}$, which must be uniform with respect to the parameters $\ell$ and $m$.
From this point of view, our proof has some similarities with that of \cite{MSi} for the plane Grushin operator, where instead precise bounds for Hermite functions are exploited.

However, in the case of \cite{MSi}, the required bounds for Hermite functions were available in the literature and could be immediately used. Instead, perhaps surprisingly, despite the fact that spherical harmonics have ubiquitous applications in mathematics and in physics,
and a wealth of estimates for them have been obtained, also recently \cite{Kra,Ward,BDWZ,Haagerup,Han,Franck},
we could not find in the literature suitable pointwise bounds for $\SpH_{\ell,m}$ that could be used ``out of the box''.

Hence part of this paper is devoted to the proof of these bounds, which may be of independent interest.
Namely, since  the spherical harmonics $\SpH_{\ell,m}$ may be explicitly written in terms of the associated  Legendre functions $\LegF_{\ell}^{m}$,
we use some earlier asymptotic approximations for $\LegF_{\ell}^{m}$ \cite{BoydDunster, Olver75, Olver},
in combination with classical estimates for Hermite and Bessel functions \cite{AW,MuSp} and more recent estimates for spherical harmonics \cite{Ward,BDWZ,Haagerup},
to show that the behaviour of  $\SpH_{\ell,m}$ follows two different regimes depending on the range where $\ell$ and $m$ vary; this corresponds to the well-known fact that both Hermite functions and Bessel functions can be obtained as suitable limits of Legendre functions \cite[eqs. (5.6.3) and (8.1.1)]{Szego}.
More precisely, if $|m| \geq \epsilon (\ell+1/2)$ for some $\epsilon \in (0,1)$ (the ``Hermite regime''), in the light of some  estimates  in \cite{Olver75, Olver} we prove that
\[
| \SpH_{\ell,m}(z)| \leq C_\epsilon \begin{cases}
((1+\ell)^{-1} + |z_3^2-a_{\ell,m}^2|)^{-1/4} &\text{for all $z\in\sfera$,}\\
|z_3|^{-1/2} \exp(-c \ell z_3^2) &\text{if $|z_3| \geq K \, a_{\ell,m}$,}
\end{cases}
\]
where $K,c,C_\epsilon$ are suitable positive constants and the ``critical point'' $a_{\ell,m}$ is given by
\begin{equation}\label{eq:criticalpoints}
a_{\ell,m}=\sqrt{1-b_{\ell,m}^2}, \qquad b_{\ell,m} = |m|/(\ell+1/2).
\end{equation}
If instead $|m|\leq \epsilon (\ell+\frac{1}{2})$ (the ``Bessel regime''), from some bounds
proved in \cite{BoydDunster} we derive that
\[
| \SpH_{\ell,m}(z)| \leq C_\epsilon \begin{cases}
\left( \frac{(1+|m|)^{4/3}}{(1+\ell)^{2}} + |z_3^2-a_{\ell,m}^2|\right)^{-1/4} & \text{for all $z \in\sfera$},\\
b_{\ell,m}^{-1/2} \, 2^{-m} &\text{if $\sqrt{1-z_3^2} \leq b_{\ell,m}/4$.}
\end{cases}
\]

This dichotomy reflects two different types of eigenfunction concentration occurring on the sphere, which are best exemplified by zonal ($m=0$) and highest weight ($m=\ell$) spherical harmonics: zonal harmonics present extreme concentration at the poles, while highest weight harmonics are highly concentrated around the equator (cf.\ \cite{Sogge3,Sogge2}). These two competing concentration phenomena determine the form of the sharp $L^p \to L^2$ bounds of the spectral projections of the Laplace--Beltrami operator $\Delta$, which were first proved by Sogge \cite{Sogge1} and in the last thirty years have been extended and improved in several ways (see \cite{Sogge3,Ze1,Ze2} and references therein). Given the ``dual nature'' of the spherical Grushin operator $\opL$ (elliptic at the poles and sub-elliptic at the equator), it is natural that the interaction of the two types of concentration plays a substantial role in its analysis too.

As a matter of fact, the existence of two different regimes, as well as the presence of two discrete parameters instead of one, makes it more technically demanding, to deal with spherical harmonics, compared to the case of Hermite functions and the plane Grushin operator.
This is true not only in the derivation of the pointwise bounds, but also in their application to prove the weighted Plancherel-type estimate: one should compare the proofs of \cite[Lemma 9 and Proposition 10]{MSi} with those of Propositions \ref{prp:indici_alti}, \ref{prp:indici_bassi} and \ref{prp:weighted_plancherel} below.

The relation between the spherical and plane Grushin operators appears not only on the eigenfunctions' side.
Indeed, a comparison of \eqref{eq:plane_grushin} and \eqref{eq:sph_grushin_coords} highlights that the plane Grushin operator $\opG_1$ can be thought of as a ``local model'' for the spherical Grushin operator $\opL$ at each point of the equator. This can be made precise by means of a ``contraction'' procedure, whereby $\opG_1$ is written as a limit of rescaled versions of $\opL$ through nonisotropic dilations $\delta_t(u,v) = (tu,t^2 v)$ on $\RR^2$. A generalisation of Mitjagin's transplantation technique \cite[Section 5]{M} can then be applied to show that our Theorem \ref{thm:mainmain} for $\opL$ implies the corresponding result for $\opG_1$ proved in \cite{MSi}.

In view of the results on higher dimensional ``flat'' Grushin spaces $\RR^{d_1}\times\RR^{d_2}$  \cite{MSi, MMu},
it would be interesting to study spectral multipliers on higher dimensional Grushin-type spheres as well (see, e.g., \cite{BFIuno}).
In particular, a natural higher-dimensional generalisation of $\opL$ would be a spherical Grushin operator with
a one-codimensional equatorial singularity, whose local model is a flat Grushin operator with $d_1 = 1 < d_2$. In the flat case, however, the classical approach based on weighted Plancherel-type estimates appears not to suffice to obtain a sharp multiplier theorem when $d_1 < d_2$ \cite{MSi} and, to overcome this difficulty, a different approach was developed in \cite{MMu}.
It is likely that similar issues may arise in the analysis of higher-dimensional spherical Grushin operators and that new techniques and ideas may be necessary.
We hope to extend our methods to cover this different framework in the near future.

\medskip

The schema of the paper is as follows.
In Section \ref{s:preliminaries} we recall the main facts about the unit sphere $\sfera$
in $\RR^3$ with the Grushin structure. We also discuss the metric features of $\sfera$,
providing a precise estimate for the sub-Riemannian distance.
Finally, we discuss the decomposition of $L^2 (\sfera)$ in terms of spherical harmonics and introduce the orthogonal polynomials and the special functions
that play a role in our study.
In Section \ref{s:bounds} we prove pointwise bounds for the spherical harmonics  $\SpH_{\ell,m}$, uniformly valid with respect to $\ell,m$.
Section \ref{s:weighted} is devoted to the proof of the weighted Plancherel-type estimate. The latter
is the gist in the proof of the sharp multiplier theorem, which is  presented in Section \ref{s:multiplier}.

\medskip

In the following, we use the ``variable constant convention'', according to which positive constants are denoted by symbols such as $C$ or $C_\epsilon$,
and these are not necessarily equal at different occurrences.
For any two nonnegative quantities $a$ and $b$ we write $a\lesssim b$  instead of $a \leq C b$ and $a \gtrsim b$ instead of $a \geq C b$; moreover $a \simeq b$ stands for the conjunction of $a \gtrsim b$ and $a \lesssim b$.

\section{Preliminaries}\label{s:preliminaries}

\subsection{The Grushin sphere}\label{ss:sfera}

Let the vector fields $Z_1,Z_2,Z_3$ on the unit sphere $\sfera$ in $\RR^3$ be defined as in the introduction, and recall that
$\Delta = -(Z_1^2+Z_2^2+Z_3^2)$
is the Laplace--Beltrami operator on $\sfera$. Note that
\[
[Z_1,Z_2] = Z_3, \qquad [Z_2,Z_3] = Z_1, \qquad [Z_3,Z_1] = Z_2.
\]
So the system of vector fields $\{Z_1,Z_2\}$ is $2$-step bracket-generating and determines a sub-Riemannian structure on $\sfera$ (more on sub-Riemannian geometry can be found, e.g., in \cite{BellaicheRisler,CalinChang,Montgomery}). At each point $z \in \sfera$, the horizontal distribution
\[
H_z \sfera = \Span \{Z_1|_z, Z_2|_z\}
\]
is given the inner product $\langle \cdot,\cdot \rangle_z$ corresponding to the norm
\[
|v|_z = \inf\{\sqrt{a^2+b^2} \tc a,b \in \RR, \, v = a Z_1|_z + b Z_2|_z\}
\]
for all $z \in \sfera$ and $v \in H_z\sfera$. Note that the horizontal distribution has not constant rank and degenerates at the equator $\equat = \{z \in \sfera \tc z_3 = 0\}$. If $z \in \equat$, then $\dim H_z \sfera = 1$ and $|\cdot|_z$ coincides with (the restriction of) the standard Riemannian norm. If $z \in \sfera \setminus \equat$, then $\dim H_z \sfera = 2$ and $\{Z_1|_z,Z_2|_z\}$ is an orthonormal basis of $H_z\sfera$ with respect to the inner product $\langle \cdot,\cdot \rangle_z$.

We denote by $\dist$ the corresponding sub-Riemannian distance on $\sfera$: in other words, for all $z,z' \in \sfera$, $\dist(z,z')$ is the infimum of the lengths $L(\gamma)$ of all horizontal curves $\gamma$ joining $z$ to $z'$. Here a horizontal curve is an absolutely continuous curve $\gamma : [a,b] \to \sfera$ whose derivative $\gamma'(t) \in H_{\gamma(t)}\sfera$ for almost all $t \in [a,b]$; the length of such a curve is defined by $L(\gamma) = \int_a^b |\gamma'(t)|_{\gamma(t)} \,dt$.

The sphere $\sfera$, with the above sub-Riemannian structure and the standard Riemannian measure $\meas$, will be referred to as the Grushin sphere. Note that our choice of the measure on $\sfera$ differs from that of \cite{BPS}. In particular, with our choice of $\meas$, the vector fields $Z_1,Z_2$ are (formally) skew-adjoint and the corresponding ``sum of squares'' operator $\opL=-(Z_1^2+Z_2^2)$,
called the Grushin operator, is self-adjoint. Note that the sub-Riemannian distance $\dist$ is the ``control distance'' associated to $\opL$ and finite propagation speed holds: for all $t \in \RR$ and $f,g \in L^2(\sfera)$,
\[
\langle \cos(t \sqrt{\opL}) f, g \rangle = 0 \qquad\text{whenever } \dist(\supp f,\supp g) > |t|
\]
(see \cite{Melrose}; cf.\ also \cite{CM} and \cite[Proposition 4.1]{RS}).

Observe that
\[
\left[\Delta, Z_j\right]=0 \qquad \text{for } j=1,2,3.
\]
In particular, if we define
\[
T=iZ_3,
\]
then the operators $\Delta,T,\opL$ are self-adjoint and commute pairwise, and moreover
\[
\opL = \Delta - T^2.
\]
This shows that the spectral decomposition of $\opL$ can be reduced to the joint spectral decomposition of $\Delta,T$.

Similarly as in \cite{BPS},
if we introduce on $\sfera$
the coordinate system
\[
\spnt{\theta}{\varphi} := (\cos \theta \cos \varphi, \cos \theta \sin \varphi, \sin \theta),
\]
where $\theta\in[-\frac\pi 2, \frac\pi2]$ and $\varphi \in \TT := \RR/2\pi\ZZ$, then
\[
d\meas(\spnt{\theta}{\varphi}) = \cos \theta \,d\theta \,d\varphi
\]
and
\[
Z_1 =\cos\varphi \,X_2-\sin\varphi \,X_1, \qquad
Z_2 =\sin\varphi \,X_2+\cos\varphi \,X_1, \qquad
Z_3 =-\frac{\partial}{\partial \varphi},
\]
where $X_1$ and $X_2$ are defined by
\begin{equation}\label{def:campi}
X_1=\frac{\partial}{\partial \theta}, \qquad
X_2=\tan \theta\frac{\partial}{\partial \varphi}.
\end{equation}
In particular
\begin{equation}\label{eq:opLpolarcoords}
\opL = Z_1^+ Z_1 +Z_2^+ Z_2 = X_1^+ X_1 + X_2^+ X_2
\end{equation}
(here $X_j^+$ and $Z_j^+$ denote the formal adjoints of $X_j$ and $Z_j$), which gives \eqref{eq:sph_grushin_coords}, 
and moreover
\[
T= -i \frac{\partial}{\partial \varphi}.
\]

The expression \eqref{eq:sph_grushin_coords} for the spherical Grushin operator in coordinates highlights its similarity to the plane Grushin operator $\opG_1$ of \eqref{eq:plane_grushin}. Hence it is not surprising that, analogously as for the Grushin plane (see, e.g., \cite[Proposition 5.1]{RS}), a precise estimate for the distance $\dist$ can be obtained.

\begin{proposition}\label{prp:subriemannian}
The sub-Riemannian distance $\dist$ on $\sfera$ satisfies
\begin{equation}\label{eq:rho-dist}
\dist (\spnt{\theta}{\varphi},\spnt{\theta'}{\varphi'}) \simeq |\theta-\theta'| +
\min\left\{{|\varphi-\varphi'|}^{1/2}, \frac{|\varphi-\varphi'|}{\max\{|\tan \theta|,|\tan \theta'|\}}\right\},
\end{equation}
where $|\varphi-\varphi'|$ denotes the arclength distance on $\TT$ between $\varphi,\varphi'$. In particular, if $\theta=\theta'=0$, then
\begin{equation}\label{eq:rho-dist-zero}
\dist (\spnt{0}{\varphi},\spnt{0}{\varphi'}) \simeq {|\varphi-\varphi'|}^{1/2};
\end{equation}
moreover, for all $\varepsilon > 0$, if $\max \{ |\theta|, |\theta'|\}\geq \varepsilon$ then
\begin{equation}\label{eq:rho-dist-far}
\dist (\spnt{\theta}{\varphi},\spnt{\theta'}{\varphi'}) \simeq \dist_R (\spnt{\theta}{\varphi},\spnt{\theta'}{\varphi'}),
\end{equation}
where $\dist_R$ is the Riemannian distance on the sphere $\sfera$ and the implicit constants may depend on $\varepsilon$.
Consequently, the $\meas$-measure $V(\spnt{\theta}{\varphi},r)$ of the $\dist$-ball centred at $\spnt{\theta}{\varphi}$ with radius $r \geq 0$ satisfies
\begin{equation}\label{eq:volume}
V(\spnt{\theta}{\varphi}, r) \simeq \min\{ 1, r^2 \max \{r, |\theta| \} \}.
\end{equation}
\end{proposition}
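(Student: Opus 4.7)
The plan is to work in the coordinates $(\theta,\varphi)$ introduced before \eqref{def:campi}, in which the horizontal distribution is spanned by the orthonormal vector fields $X_1 = \partial_\theta$ and $X_2 = \tan\theta\,\partial_\varphi$. A horizontal curve $\gamma(t) = \spnt{\theta(t)}{\varphi(t)}$ then has length
\[
L(\gamma) = \int\sqrt{\dot\theta(t)^2 + \dot\varphi(t)^2/\tan^2\theta(t)}\,dt,
\]
with the understanding that $\dot\varphi(t) = 0$ must hold whenever $\theta(t) = 0$. In this form, $\opL$ is essentially a perturbation of the plane Grushin operator $\opG_1$ with $u$ replaced by $\tan\theta$, and one adapts the standard arguments for $\opG_1$ (cf.\ \cite[Proposition 5.1]{RS}) to the spherical setting.

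For the upper bound in \eqref{eq:rho-dist}, I would assume $|\theta| \geq |\theta'|$ without loss of generality, so that $\max\{|\tan\theta|,|\tan\theta'|\} \simeq |\tan\theta|$, and then compare two explicit piecewise-coordinate horizontal paths joining the endpoints. The first moves at constant $\theta$ from $\varphi$ to $\varphi'$ (length $\simeq |\varphi-\varphi'|/|\tan\theta|$) and then at constant $\varphi'$ from $\theta$ to $\theta'$ (length $\simeq |\theta-\theta'|$). The second travels first to a latitude $\theta^* \simeq |\varphi-\varphi'|^{1/2}$ chosen beyond both $|\theta|$ and $|\theta'|$, then traverses at constant $\theta^*$ until $\varphi'$, and finally descends to $\theta'$, for a total length $\simeq |\theta-\theta'| + |\varphi-\varphi'|^{1/2}$ (with $|\theta-\theta'| \lesssim \theta^*$ in the regime where this path is competitive). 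Taking the minimum produces the claimed upper bound.

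For the lower bound, let $\gamma$ be any horizontal curve of length $L$ joining the endpoints. Projection onto $\theta$ gives $|\theta-\theta'| \leq L$ immediately. For the $\varphi$-coordinate, the pointwise inequality $|\dot\varphi| \leq |\tan\theta| \cdot \sqrt{\dot\theta^2 + \dot\varphi^2/\tan^2\theta}$ yields $|\varphi-\varphi'| \leq M\,L$, where $M := \sup_{t}|\tan\theta(t)|$. The supremum $M$ must now be bounded by endpoint data: since the curve can reach its $|\theta|$-extremum only within time at most $L$ of each endpoint, on any compact region bounded away from the poles (which covers the relevant case, the remaining one being handled by \eqref{eq:rho-dist-far}) one obtains $M \lesssim \max\{|\tan\theta|,|\tan\theta'|\} + L$. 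Substituting yields a quadratic-type inequality in $L$ whose resolution is precisely $L \gtrsim \min\bigl\{|\varphi-\varphi'|^{1/2},\,|\varphi-\varphi'|/\max\{|\tan\theta|,|\tan\theta'|\}\bigr\}$; combined with $L \geq |\theta-\theta'|$, this gives the required lower bound.

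The remaining statements follow readily. Equation \eqref{eq:rho-dist-zero} is the specialisation of \eqref{eq:rho-dist} at $\theta=\theta'=0$, while \eqref{eq:rho-dist-far} follows because $\max\{|\theta|,|\theta'|\} \geq \varepsilon$ implies $\max\{|\tan\theta|,|\tan\theta'|\} \gtrsim_\varepsilon 1$, so the second argument in the $\min$ of \eqref{eq:rho-dist} dominates and the right-hand side becomes $\simeq_\varepsilon |\theta-\theta'|+|\varphi-\varphi'|$, which is comparable (for nearby points; compactness of $\sfera$ handles the rest) to the Riemannian expression $\simeq |\theta-\theta'|+\cos\theta\,|\varphi-\varphi'|$. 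Finally, for \eqref{eq:volume}, Fubini on $d\meas = \cos\theta\,d\theta\,d\varphi$ gives $V(\spnt{\theta}{\varphi},r) \simeq r \cdot r\max\{r,|\theta|\}$ (capped by $1$ for $r$ exceeding the diameter), since the $\theta'$-section of the ball has length $\simeq r$ and for each such $\theta'$ the admissible $\varphi'$-range has length $\simeq r\max\{r,|\theta|\}$. The main obstacle throughout is the lower bound: the supremum $M$ along a horizontal curve may far exceed the endpoint values of $|\tan\theta|$, and controlling it in terms of the endpoint data \emph{plus} $L$ itself is exactly what produces the characteristic dichotomy between the two regimes in the $\min$.
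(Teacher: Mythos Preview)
Your approach is genuinely different from the paper's. The paper does not construct or analyse horizontal curves directly; instead it proves \eqref{eq:rho-dist-far} \emph{first}, by an abstract comparison argument (the sub-Riemannian and Riemannian norms on horizontal vectors are locally equivalent on $\sfera\setminus\equat$, and a compactness lemma promotes this to a global equivalence on the set $\{\max\{|\theta|,|\theta'|\}\geq\varepsilon\}$). It then proves \eqref{eq:rho-dist} by showing that $\dist$ and the right-hand side are locally equivalent at every point: away from $\equat$ this reduces to \eqref{eq:rho-dist-far}, while near $\equat$ the map $(u,v)\mapsto\spnt{u}{v}$ is a local sub-Riemannian quasi-isometry between the Grushin plane and the Grushin sphere (since $\tan u\simeq u$), and one simply imports the known formula \cite[Proposition 5.1]{RS} for $\dist_{\gr}$. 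Your route, by contrast, redoes the Grushin-plane computation on $\sfera$ by hand; this is more elementary and self-contained, and makes the geometric mechanism (the quadratic inequality in $L$) completely transparent.

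There is, however, a logical gap. In the lower bound you defer the case where the curve approaches the poles to \eqref{eq:rho-dist-far}, but later you derive \eqref{eq:rho-dist-far} \emph{from} \eqref{eq:rho-dist}; this is circular. The issue is not cosmetic: the coordinates $(\theta,\varphi)$ degenerate at the poles, so for a horizontal curve passing through (or near) a pole the quantity $M=\sup_t|\tan\theta(t)|$ may be infinite and the inequality $|\varphi-\varphi'|\leq ML$ becomes vacuous. The cleanest repair is to establish \eqref{eq:rho-dist-far} independently before attacking \eqref{eq:rho-dist}, exactly as the paper does: on $\sfera\setminus\equat$ the horizontal distribution is full and the sub-Riemannian inner product $\langle\cdot,\cdot\rangle_z$ is a continuous nondegenerate metric, hence locally comparable to the Riemannian one, and compactness upgrades this to the global statement. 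Alternatively, within your framework you can argue directly: if $L$ exceeds a fixed constant (say $\pi/8$) the lower bound is trivial since the right-hand side is bounded; if $L<\pi/8$ and $\max\{|\theta|,|\theta'|\}\leq\pi/4$ the curve stays in $|\theta(t)|\leq 3\pi/8$ and your estimate $M\lesssim\max\{|\tan\theta|,|\tan\theta'|\}+L$ is valid; and if $L<\pi/8$ and $\max\{|\theta|,|\theta'|\}>\pi/4$ the curve stays in $|\theta(t)|>\pi/8$, where the sub-Riemannian and Riemannian speeds along $\gamma$ are comparable, so $L\simeq L_R(\gamma)\geq\dist_R$, which is easily seen to dominate the right-hand side for such nearby points.
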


From the above estimates it is clear that $\dist$ is topologically, but not Lipschitz equivalent to the Riemannian distance $\dist_R$ on the whole $\sfera$. Indeed, the measure of a $\dist$-ball with centre on the equator $\equat$ is given by $V(\spnt{0}{\varphi},r) \simeq r^3$ for $r$ small, while the corresponding volume of a Riemannian ball behaves as $r^2$. However, far from the equator $\equat$, $\dist$
is actually equivalent to $\dist_R$.

To prove Proposition \ref{prp:subriemannian}, it is convenient first to introduce a couple of lemmas. The first one reduces global equivalence to local equivalence via compactness.

\begin{lemma}\label{lem:loc_glob_eq}
Let $X$ be a Hausdorff topological space and $K \subseteq X \times X$ be compact. Let $\Phi, \Psi : K \to [0,\infty)$ be continuous functions with the following properties:
\begin{itemize}
\item each of $\Phi$ and $\Psi$ is \emph{point-separating}, i.e., for all $(x,y) \in K$, if $\Phi(x,y) = 0$ or $\Psi(x,y) = 0$ then $x=y$;
\item $\Phi$ and $\Psi$ are \emph{locally equivalent}, i.e., for all $p \in X$ with $(p,p) \in K$ there exist a neighbourhood $U$ of $p$ in $X$ and a positive constant $C$ such that
\begin{equation}\label{eq:equivalence}
C^{-1} \Phi(x,y) \leq \Psi(x,y) \leq C \Phi(x,y)
\end{equation}
for all $(x,y) \in (U \times U) \cap K$.
\end{itemize}
Then $\Phi,\Psi$ are \emph{globally equivalent}, i.e., there exist a positive constant $C$ such that \eqref{eq:equivalence} holds for all $(x,y) \in K$.
\end{lemma}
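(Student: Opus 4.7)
The plan is to split $K$ into a neighbourhood of the diagonal, where the local equivalence hypothesis can be leveraged directly, and its complement, where $\Phi$ and $\Psi$ are automatically bounded away from $0$ and from $\infty$. Let $D = K \cap \{(x,x) \tc x \in X\}$ denote the diagonal part of $K$; since $X$ is Hausdorff, the diagonal of $X \times X$ is closed in $X \times X$, and hence $D$ is a closed, therefore compact, subset of $K$.

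First I would exploit the local equivalence hypothesis at each $p$ with $(p,p) \in K$ to produce an open neighbourhood $U_p \subseteq X$ of $p$ and a constant $C_p > 0$ such that \eqref{eq:equivalence} holds on $(U_p \times U_p) \cap K$. The diagonal products $U_p \times U_p$ then form an open cover of $D$ in $X \times X$, from which compactness of $D$ extracts a finite subcover indexed by $p_1, \dots, p_n$. Setting $W = \bigcup_{i=1}^n U_{p_i} \times U_{p_i}$ and $C_0 = \max_i C_{p_i}$, every $(x,y) \in W \cap K$ lies in some $U_{p_i} \times U_{p_i}$ and therefore satisfies \eqref{eq:equivalence} with constant $C_0$.

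Next I would handle the complement $K \setminus W$, which is closed in the compact space $K$ and therefore compact. Since the zero sets of $\Phi$ and $\Psi$ lie inside $D \subseteq W$ by the point-separating hypothesis, the restrictions of $\Phi$ and $\Psi$ to $K \setminus W$ are continuous and strictly positive on a compact set, and so they attain positive minima $m_\Phi, m_\Psi > 0$; moreover $\Phi$ and $\Psi$ attain finite maxima $M_\Phi, M_\Psi < \infty$ on all of $K$. On $K \setminus W$ this yields
\[
\Psi(x,y) \leq M_\Psi \leq \frac{M_\Psi}{m_\Phi}\, \Phi(x,y),
\]
and symmetrically $\Phi \leq (M_\Phi/m_\Psi)\, \Psi$. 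Taking $C = \max\{C_0,\, M_\Psi/m_\Phi,\, M_\Phi/m_\Psi\}$ then yields \eqref{eq:equivalence} throughout $K$.

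The main point to watch is that the cover must consist of \emph{diagonal} products $U_p \times U_p$ rather than arbitrary open neighbourhoods of $(p,p)$ in $X \times X$; this ensures that each $(x,y) \in W \cap K$ sits inside a single product chart on which the hypothesis directly applies, and avoids any Lebesgue-number style refinement. Everything else is a routine compactness-and-continuity argument.
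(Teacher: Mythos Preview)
Your proof is correct and follows essentially the same approach as the paper's: cover the diagonal $K\cap\Delta_X$ by finitely many products $U_{p_i}\times U_{p_i}$ to get a uniform constant there, and use compactness plus strict positivity of $\Phi,\Psi$ on the complement to handle the rest. The only cosmetic difference is that you track the four extrema $m_\Phi,m_\Psi,M_\Phi,M_\Psi$ separately, whereas the paper wraps them into a single constant $C'$.
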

\begin{proof}
Since $X$ is Hausdorff, the diagonal $\Delta_X$ of $X \times X$ is closed. For all $p \in X$ with $(p,p) \in K$, let $U_p$ be an open neighbourhood of $p$ in $X$ such that \eqref{eq:equivalence} holds for all $(x,y) \in (U_p\times U_p) \cap K$ with a positive constant $C_p$ instead of $C$. By compactness of $K \cap \Delta_X$, we can find finitely many points $p_1,\dots,p_k$ so that $U_{p_1}\times U_{p_1},\dots,U_{p_k} \times U_{p_k}$ cover $K \cap \Delta_X$. Then $K' := K \setminus \bigcup_{i=1}^k U_{p_i} \times U_{p_i}$ is compact and does not intersect $\Delta_X$, so $\Phi,\Psi$ are strictly positive on $K'$ and, by continuity and compactness, there exists a positive constant $C'$ such that $(C')^{-1} < \Phi(x,y), \Psi(x,y) < C'$ for all $(x,y) \in K$. By taking $C = \max \{(C')^2, C_{p_1},\dots,C_{p_k}\}$, the conclusion follows.
\end{proof}

The second lemma reduces local equivalence of sub-Riemannian distances to an infinitesimal condition, i.e., the equivalence of the corresponding norms on the horizontal distributions.

\begin{lemma}\label{lem:subriemannian-enhanced}
Let $M, N$ be sub-Riemannian manifolds, with sub-Riemannian distance functions $d_M,d_N$ respectively. Let $F: U \to V$ be a diffeomorphism between open sets $U\subseteq M$ and $V\subseteq N$ and $C_1,C_2$ be positive constants
such that, for all $p\in U$,
\begin{equation}\label{eq:equalitydF}
dF_p (H_p M)=H_{F(p)}N
\end{equation}
and, for all $v\in H_p M$,
\begin{equation}\label{eq:enhanced}
C_1 |dF_p (v)|\leq |v|\leq C_2 |dF_p (v)|.
\end{equation}
Then for all points $p\in U$ there is a neighbourhood $\Omega\subseteq U$ of $p$ such that
\[
C_1 \, d_N(F(x), F(y)) \leq d_M(x,y) \leq C_2 \, d_N(F(x), F(y)).
\]
for all $x,y\in \Omega$.
\end{lemma}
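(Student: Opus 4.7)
The plan is to transfer horizontal curves back and forth between $U$ and $V=F(U)$ through the diffeomorphism $F$, using \eqref{eq:equalitydF} to guarantee that horizontality is preserved and \eqref{eq:enhanced} to track the length distortion. The one subtle point is that the sub-Riemannian distances $d_M$ and $d_N$ are defined as infima of lengths over horizontal curves in \emph{all} of $M$ and $N$, whereas $F$ only allows us to transport curves that lie in $U$ or $V$; this will be handled by shrinking $\Omega$ so that near-minimising curves are forced to stay in the domain of $F$ (or $F^{-1}$).

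First I would use the continuity of the sub-Riemannian distance (which holds in the bracket-generating setting relevant to this paper, so that small enough metric balls are contained in any prescribed chart) to pick $\delta>0$ such that $\overline{B_M(p,2\delta)} \subseteq U$ and $\overline{B_N(F(p),2\delta)} \subseteq V$. Then I would choose $\Omega$ as a neighbourhood of $p$ in $U$ small enough that $d_M(x,y) < \delta$ and $d_N(F(x),F(y)) < \delta$ for all $x,y \in \Omega$. Fix $x,y\in\Omega$; for any sufficiently small $\epsilon>0$ there exists a horizontal curve $\gamma$ in $M$ joining $x$ to $y$ with $L_M(\gamma) < d_M(x,y)+\epsilon < 2\delta$, so $\gamma$ is confined to $B_M(x,2\delta) \subseteq B_M(p,3\delta) \subset U$. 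Then $F\circ\gamma$ is well-defined in $V$ and, by \eqref{eq:equalitydF}, horizontal; integrating the left half of \eqref{eq:enhanced} along $\gamma$ gives
\[
L_N(F\circ\gamma) = \int |dF_{\gamma(t)}(\gamma'(t))|\,dt \leq C_1^{-1}\int |\gamma'(t)|\,dt = C_1^{-1} L_M(\gamma),
\]
so letting $\epsilon\to 0$ yields $C_1 \, d_N(F(x),F(y)) \leq d_M(x,y)$.

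The reverse inequality $d_M(x,y) \leq C_2\, d_N(F(x),F(y))$ is obtained by the symmetric argument applied to $F^{-1}$: any horizontal curve $\tilde\gamma$ in $N$ from $F(x)$ to $F(y)$ with length at most $d_N(F(x),F(y))+\epsilon<2\delta$ stays in $V$, and writing a generic tangent as $\tilde\gamma'(t)=dF(v(t))$ the right half of \eqref{eq:enhanced} gives $|v(t)| \leq C_2|\tilde\gamma'(t)|$, hence $L_M(F^{-1}\circ\tilde\gamma) \leq C_2\,L_N(\tilde\gamma)$. The only place where there is anything to do beyond unravelling definitions is the confinement step: it is crucial that $\Omega$ be small enough compared with the ``escape radius'' from $U$ or $V$, so that near-minimising horizontal curves cannot leak out of the domain of $F$. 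Once this is in place, the two constants $C_1$ and $C_2$ pass from \eqref{eq:enhanced} to the distance inequality with no additional loss.
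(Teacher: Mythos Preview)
Your approach is essentially the same as the paper's: confine near-minimising horizontal curves to $U$ (resp.\ $V$) by shrinking the neighbourhood, then push them through $F$ (resp.\ $F^{-1}$) and integrate \eqref{eq:enhanced}. One small bookkeeping slip: with $\Omega\subseteq B_M(p,\delta)$ and curves of length $<2\delta$, the curve is only guaranteed to lie in $B_M(p,3\delta)$, not $B_M(p,2\delta)$, so you should require $\overline{B_M(p,3\delta)}\subseteq U$ (and similarly on the $N$ side); this is harmless and the argument goes through unchanged.
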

\begin{proof}
By \eqref{eq:equalitydF} and \eqref{eq:enhanced}, for every horizontal curve $\gamma$ in $U$, $F\circ \gamma$ is a horizontal curve in $V$ and their lengths $L(\gamma)$ and $L(F \circ \gamma)$ are related by
\begin{equation}\label{eq:rel_lengths}
C_1 L(F\circ \gamma) \leq L(\gamma) \leq C_2 L(F \circ \gamma).
\end{equation}

For all $p\in M$, there is $r>0$ such that
the sub-Riemannian ball $B_M(p,r)$ is contained in $U$. Then, for all $x,y \in B_M (p, r/2)$,
\begin{equation}\label{eq:dist_M}
d_M (x,y)=\inf \{ L(\gamma): \gamma \text{ horizontal curve in $U$ joining $x$ to $y$} \},
\end{equation}
since a curve joining $x$ to $y$ and leaving $U$ must have length at least $r$. Similarly, if  $r'>0$ is such that $B_N(F(p),r') \subseteq V$, then, for all $x,y \in B_N (F(p), r'/2)$,
\begin{equation*}
d_N (x,y)= \inf \{ L(\gamma): \gamma \text{ horizontal curve in $V$ joining $x$ to $y$} \}.
\end{equation*}

Take $\Omega = B_M (p, r/2) \cap F^{-1} (B_N (F(p), r'/2))$. Then, for all $x,y \in \Omega$,
\[\begin{split}
d_N (F(x),F(y))&=
\inf \{
L(\gamma):
\gamma \text{ horizontal curve in $V$ joining $F(x)$ to $F(y)$}
\}\\
&=
\inf \{
L(F\circ \gamma):
\gamma \text{ horizontal curve in $U$ joining $x$ to $y$}
\},
\end{split}\]
where we used the fact that $F : U \to V$ is a diffeomorphism preserving the horizontal vectors. This, combined with \eqref{eq:rel_lengths} and \eqref{eq:dist_M}, gives the conclusion.
\end{proof}

\begin{proof}[Proof of Proposition \ref{prp:subriemannian}]
Let us first prove the equivalence \eqref{eq:rho-dist-far}. Note that the sub-Riemannian distance $\dist$ and the Riemannian distance $\dist_R$ are locally equivalent far from the equator $\equat$: indeed, since $H_p M = T_p M$ for all $p \in \sfera \setminus \equat$ and the Riemannian and sub-Riemannian inner products on $T_p M$ depend continuously on $p$, we can apply Lemma \ref{lem:subriemannian-enhanced} with $M$ and $N$ being the Riemannian and sub-Riemannian $\sfera$ respectively, and $F$ being the identity map restricted to any open subset $U$ of $\sfera$ whose closure does not intersect $\equat$. The equivalence \eqref{eq:rho-dist-far} then follows from Lemma \ref{lem:loc_glob_eq} applied with
\begin{equation}\label{eq:semi_far_equator}
K = \{ (\spnt{\theta}{\varphi},\spnt{\theta'}{\varphi'}) \in \sfera \times \sfera \tc \max\{|\theta|,|\theta'|\} \geq \varepsilon \};
\end{equation}
indeed the local equivalence condition in Lemma \ref{lem:loc_glob_eq} need only be tested at those points $p \in \sfera$ such that $(p,p) \in K$, i.e., far from the equator.

We now prove the equivalence \eqref{eq:rho-dist}. Note that the expression in the right-hand side of \eqref{eq:rho-dist} defines a continuous function $\Phi : \sfera \times \sfera \to [0,\infty)$, which is point-separating in the sense of Lemma \ref{lem:loc_glob_eq}. Hence, in order to prove the equivalence \eqref{eq:rho-dist}, it is enough to show that $\Phi$ and $\dist$ are locally equivalent.

We first show that $\dist$ and $\Phi$ are locally equivalent at each point of $\sfera \setminus \equat$. For this it is certainly enough to prove that, for every $\varepsilon > 0$, the functions $\dist$ and $\Phi$ are globally equivalent on the set $K$ defined in \eqref{eq:semi_far_equator}.
Now, if $\max\{|\theta|,|\theta'|\} \geq \varepsilon$, then $1/\max\{|\tan\theta|,|\tan\theta'|\} \simeq \min\{\cos\theta,\cos\theta'\}$
and $|\theta-\theta'| \simeq |\cos\theta-\cos\theta'|$, so
\[\begin{split}
\Phi(\spnt{\theta}{\varphi},\spnt{\theta}{\varphi'})
&\simeq |\cos\theta-\cos\theta'| + \min\{\cos\theta,\cos\theta'\} |\varphi-\varphi'| \\
&\simeq |(\cos\theta) \, e^{i\varphi} - (\cos\theta') \, e^{i\varphi'}| \\
&\simeq \dist_R(\spnt{\theta}{\varphi},\spnt{\theta}{\varphi'}) \\
&\simeq \dist(\spnt{\theta}{\varphi},\spnt{\theta}{\varphi'}),
\end{split}\]
where the implicit constants may depend on $\varepsilon$. In the last two steps we have used \eqref{eq:rho-dist-far} and the equivalence
$\dist_R(\spnt{\theta}{\varphi},\spnt{\theta}{\varphi'}) \simeq |(\cos\theta) \, e^{i\varphi} - (\cos\theta') \, e^{i\varphi'}|$
between the Riemannian distance of $\spnt{\theta}{\varphi}$ and $\spnt{\theta}{\varphi'}$ on $\sfera$ and the Euclidean distance of their projections on the equatorial plane,
which are both valid when $\max\{|\theta|,|\theta'|\} \geq \varepsilon$.

We now prove the local equivalence of $\dist$ and $\Phi$ at points of the equator $\equat$. Let $G : (-\pi/2,\pi/2) \times \RR \to \sfera$ be defined by
\[
G(u,v) = \spnt{u}{v}
\]
(by a small abuse of notation, we are not distinguishing $v \in \RR$ from its equivalence class in $\TT=\RR/2\pi\ZZ$).
Then $G$ is a local diffeomorphism, and moreover from \eqref{def:campi} it follows that
\begin{equation}\label{eq:Gsp_campi}
dG\left(\frac{\partial}{\partial u}\right)=X_1, \qquad
dG\left(\tan u \, \frac{\partial}{\partial v}\right)=X_2.
\end{equation}
Note that the H\"ormander system of vector fields $\left\{\frac{\partial}{\partial u}, u \, \frac{\partial}{\partial v}\right\}$ induces the sub-Riemannian structure of the Grushin plane $\RR^2$ (the corresponding sub-Riemannian distance will be denoted by $\dist_{\gr}$), while the system $\{X_1,X_2\}$ induces the sub-Riemannian structure of the Grushin sphere $\sfera$ away from the poles.
Since, for all $\varepsilon >0$, we have that $\tan u \simeq u$ if $|u| \leq \pi/2-\varepsilon$, from \eqref{eq:Gsp_campi} it follows that Lemma \ref{lem:subriemannian-enhanced} can be applied with $M$ being the Grushin plane $\RR^2$, $N$ being the Grushin sphere $\sfera$ and $F$ being the restriction of $G$ to a sufficiently small open neighbourhood of any point of $(-\pi/2,\pi/2) \times \RR$, thus obtaining that $\dist_{\gr}$ and $\dist \circ G$ are locally equivalent on $(-\pi/2,\pi/2) \times \RR$. In particular, every point of $\equat = G(\{0\} \times \RR)$ has a sufficiently small neighbourhood $U$ such that, for all $\spnt{\theta}{\varphi},\spnt{\theta'}{\varphi'} \in U$,
\[\begin{split}
\dist(\spnt{\theta}{\varphi},\spnt{\theta'}{\varphi'})
& \simeq \dist_{\gr}((\theta,\varphi),(\theta',\varphi')) \\
& \simeq |\theta-\theta'| + \min\left\{|\varphi-\varphi'|^{1/2}, \frac{| \varphi-\varphi'|}{|\theta|+|\theta'|} \right\} \\
& \simeq \Phi(\spnt{\theta}{\varphi},\spnt{\theta'}{\varphi'}),
\end{split}\]
where the estimate for $\dist_{\gr}$ from \cite[Proposition 5.1]{RS} was used. This shows the local equivalence of $\dist$ and $\Phi$ at each point of $\equat$, and concludes the proof of \eqref{eq:rho-dist}.

The estimate \eqref{eq:volume} for the volume of balls is easily obtained from the previous estimates for $\dist$, by considering separately the cases $|\theta| \leq \pi/4$ and $|\theta| \geq \pi/4$.
\end{proof}

\subsection{Spherical harmonics and spectral decompositions}

Let
\[
\IS = \{ (\ell,m) \in \NN \times \ZZ \tc |m| \leq \ell\}.
\]
In the following, for all $(\ell,m) \in \IS$, the symbol $\SpH_{\ell,m}$ shall denote a classical spherical harmonic on the sphere $\sfera$, explicitly given by (cf.\ \cite[eq.\ (6.10.7)]{Zw})
\begin{equation}\label{eq:spher-harm}
\SpH_{\ell,m} (\spnt{\theta}{\varphi})=\sqrt{\frac{2\ell+1}{4\pi} \,\frac{(\ell-m)!}{(\ell+m)!}} \, e^{im\varphi} \, \LegF_{\ell}^{m}(\sin \theta),
\end{equation}
for $\theta \in [-\frac{\pi}{2}, \frac{\pi}{2}]$ and $\varphi \in[0, 2\pi]$.
Here $\LegF_{\ell}^{m}$ denotes the associated Legendre function (also known as Ferrers function) of indices $\ell \in \NN$ and $m \in \ZZ$, with $|m| \leq \ell$, defined as follows:
for all $\ell,m \in \NN$ with $m \leq \ell$,
\begin{equation}\label{eq:Plm}
\LegF_{\ell}^{m}(x)= (-1)^m (1-x^2)^{m/2} \left(\frac{d}{dx}\right)^m P_{\ell}(x),
\end{equation}
and
\begin{equation}\label{eq:Plmmeno}
\LegF_{\ell}^{-m}(x)= (-1)^m\frac{(\ell-m)!}{(\ell+m)!} \,\LegF_{\ell}^m (x).
\end{equation}
In \eqref{eq:Plm}
$P_\ell$  denotes the Legendre polynomial of degree $\ell \in \NN$,
given  by
\begin{equation}\label{eq:Legendre}
P_\ell (x)=P^{(0,0)}_\ell (x),
\end{equation}
$P^{(0,0)}_\ell$ being a Jacobi polynomial of degree $\ell$ and indices both equal to $0$.
More generally, by the symbol
$P^{(\alpha,\beta)}_k$ we shall  denote the Jacobi polynomial of degree $k \in \NN$
and indices $\alpha,\beta>-1$, defined by means of Rodrigues' formula as
\begin{equation}\label{eq:jacobi}
P^{(\alpha,\beta)}_k(x)= \frac{(-1)^k}{2^k\, k!} (1-x)^{-\alpha} (1+x)^{-\beta} \left(\frac{d}{dx}\right)^k \left((1-x)^{\alpha+k} (1+x)^{\beta+k} \right)
\end{equation}
for $x \in (-1,1)$. Note that, for all $\ell,m\in \NN$ with $m \leq \ell$,
\begin{equation}\label{eq:LegendreJacobi}
\LegF_\ell^m(x) = (-1)^m \frac{(\ell+m)!}{2^m \ell!} (1-x^2)^{m/2} P_{\ell-m}^{(m,m)}(x).
\end{equation}
(cf.\ \cite[\S 10.10, eq.\ (46)]{EMOT}). For more details about orthogonal polynomials we refer the reader to the classical book by Szeg\H{o}
\cite{Szego}.

For future convenience, we collect some well-known properties of
$\SpH_{\ell,m}$.

\begin{lemma}\label{lem:SteinWeiss}
The following identities hold.
\begin{enumerate}[label=(\roman*)]
\item\label{en:sphharm_orthon}
For all $(\ell,m),(\ell',m') \in \IS$
\[
\int_\sfera \SpH_{\ell,m} (z) \, \overline{\SpH_{\ell',m'} (z)} \,d\meas(z) = \delta_{\ell\ell'} \delta_{mm'}.
\]
\item\label{en:sphharm_sym} For all $(\ell,m) \in \IS$ and $z \in \sfera$,
\[
\SpH_{\ell,-m}  (z) = (-1)^m\overline{\SpH_{\ell,m} (z)} .
\]
\item\label{en:sphharm_sum}
For all $\ell \in \NN$ and $z \in \sfera$,
\[
\sum_{m=-\ell}^{\ell} \left|\SpH_{\ell,m}(z)\right|^2= \meas(\sfera)^{-1}\, (2\ell+1).
\]
\end{enumerate}
\end{lemma}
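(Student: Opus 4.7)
The plan is to verify each assertion separately, exploiting the explicit formula \eqref{eq:spher-harm} together with standard properties of Jacobi polynomials recalled in Section \ref{s:preliminaries}.

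For part \ref{en:sphharm_orthon}, I would substitute \eqref{eq:spher-harm} into the integral and use the coordinate expression $d\meas(\spnt{\theta}{\varphi}) = \cos\theta \, d\theta \, d\varphi$ to factor the integrand. The $\varphi$-integral produces $2\pi\delta_{mm'}$, so it suffices to assume $m=m'$ and establish that
\begin{equation*}
\int_{-1}^{1} \LegF_\ell^m(x) \, \LegF_{\ell'}^m(x) \, dx = \frac{2}{2\ell+1} \, \frac{(\ell+m)!}{(\ell-m)!} \, \delta_{\ell\ell'},
\end{equation*}
after the change of variable $x = \sin\theta$. For $m\geq 0$ this follows from the relation \eqref{eq:LegendreJacobi} between $\LegF_\ell^m$ and the Jacobi polynomial $P_{\ell-m}^{(m,m)}$, combined with the classical orthogonality relations of the Jacobi polynomials on $[-1,1]$ with weight $(1-x^2)^m$, available in \cite{Szego}. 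The symmetry \eqref{eq:Plmmeno} reduces the cases with negative $m$ to those with positive $m$, possibly introducing overall signs that cancel against the prefactor in \eqref{eq:spher-harm}.

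Part \ref{en:sphharm_sym} is a direct computation: substituting \eqref{eq:Plmmeno} into \eqref{eq:spher-harm} with $-m$ in place of $m$ gives
\begin{equation*}
\SpH_{\ell,-m}(\spnt{\theta}{\varphi}) = \sqrt{\frac{2\ell+1}{4\pi}\, \frac{(\ell+m)!}{(\ell-m)!}} \, e^{-im\varphi} \, (-1)^m \, \frac{(\ell-m)!}{(\ell+m)!} \, \LegF_\ell^m(\sin\theta),
\end{equation*}
which, after simplifying the factorials, coincides with $(-1)^m\,\overline{\SpH_{\ell,m}(\spnt{\theta}{\varphi})}$.

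For part \ref{en:sphharm_sum}, the key remark is that the finite-dimensional space $\Span\{\SpH_{\ell,m} \tc |m|\leq \ell\}$ is the eigenspace of $\Delta$ corresponding to the eigenvalue $\ell(\ell+1)$, and is therefore invariant under the natural unitary action of the rotation group $SO(3)$ on $L^2(\sfera)$. Consequently, the function $z \mapsto \sum_{m=-\ell}^\ell |\SpH_{\ell,m}(z)|^2$, being the pointwise trace of the orthogonal projection onto this rotation-invariant subspace (with respect to the basis $\{\SpH_{\ell,m}\}_m$ given by part \ref{en:sphharm_orthon}), is rotation-invariant, hence constant on the homogeneous space $\sfera$. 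Integrating this constant over $\sfera$ and invoking part \ref{en:sphharm_orthon} yields the value $2\ell+1$, so the constant itself must equal $\meas(\sfera)^{-1}(2\ell+1)$, as claimed.

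The only nontrivial input is the orthogonality formula for Jacobi polynomials, which is entirely classical; everything else reduces to bookkeeping of factorials and phases, or to the rotation invariance of the $\ell$-th eigenspace of $\Delta$.
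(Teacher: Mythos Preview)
Your proposal is correct. The paper's own proof consists solely of citations to standard references (\cite{Zw} for parts \ref{en:sphharm_orthon} and \ref{en:sphharm_sym}, \cite{Stein-Weiss} for part \ref{en:sphharm_sum}), whereas you have spelled out the underlying arguments directly; in particular, your rotation-invariance argument for part \ref{en:sphharm_sum} is precisely the one found in \cite[Ch.~4, Corollary~2.9]{Stein-Weiss}, and your computations for parts \ref{en:sphharm_orthon} and \ref{en:sphharm_sym} are the standard derivations behind the formulas cited from \cite{Zw}.
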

\begin{proof}
\ref{en:sphharm_orthon} We refer to  \cite[eq.\ (6.10.9)]{Zw}.

\ref{en:sphharm_sym}. This follows from \eqref{eq:Plmmeno}; see also \cite[eq.\ (6.10.8)]{Zw}.

\ref{en:sphharm_sum} See \cite[Ch. 4, Corollary 2.9]{Stein-Weiss}.
\end{proof}

The  spherical harmonics form an orthonormal basis of the Hilbert space of square-integrable functions $L^2(\sfera)$ \cite[\S IV.2]{Stein-Weiss}, i.e., every such function $f$ can be expressed as a linear combination of spherical harmonics
\begin{equation}\label{eq:base}
f=\sum_{(\ell,m)\in\IS} c_{\ell,m} \, \SpH_{\ell,m} ,
\end{equation}
where the coefficients $c_{\ell,m}$ may
be computed as
\begin{equation}\label{eq:coefficienti}
c_{\ell,m}=\langle f, \SpH_{\ell,m}  \rangle = \int_\sfera f(z)\,\overline{\SpH_{\ell,m} (z) } \,d\meas(z).
\end{equation}

We recall moreover the classical spectral decompositions
\[
\Delta \SpH_{\ell,m} = \ell (\ell+1) Y_{\ell,m}
\]
and
\[
T \SpH_{\ell,m} = m \SpH_{\ell,m},
\]
for all $(\ell,m) \in \IS$, whence
\[
\opL \SpH_{\ell,m} = \lambda_{\ell,m} \SpH_{\ell,m},
\]
where
\begin{equation}\label{eq:eigen}
\begin{split}
\lambda_{\ell,m} &= \ell(\ell+1)-m^2 \\
&=(\ell-m+1/2)(\ell+m+1/2)-1/4.
\end{split}
\end{equation}

We also recall that, for all $\ell \in \NN$, the function
\begin{equation*}
\SpH_{\ell,0} (\spnt{\theta}{\varphi}) = \sqrt{\frac{2\ell+1}{4\pi}} P_{\ell}(\sin \theta)
\end{equation*}
is the zonal spherical harmonic of degree $\ell$, while
\begin{equation}\label{eq:vettore-peso-massimo}
\begin{split}
\SpH_{\ell,\ell} (\spnt{\theta}{\varphi})
&= \sqrt{\frac{2\ell+1}{4\pi} \,\frac{1}{(2\ell)!}} e^{i\ell\varphi} \LegF_{\ell}^{\ell}(\sin \theta) \\
&= \frac{(-1)^{\ell}}{2^\ell \, \ell!} \sqrt{\frac{2\ell+1}{4\pi} \,{(2\ell)!}} \,e^{i\ell\varphi} \, (\cos\theta)^{\ell}
\end{split}
\end{equation}
is the highest weight spherical harmonic of degree $\ell$.

The described spectral decomposition, together with the explicit formulas for $\opL$ and $T$ given in Section \ref{ss:sfera}, allow us to easily prove the following crucial estimate.

\begin{lemma}\label{lem:weighted_est}
Let $f \in L^2(\sfera)$ be orthogonal to $\SpH_{\ell,0}$ for all $\ell \in \NN$. Then, for all $\alpha \in [0,1]$,
\begin{equation}\label{eq:weighted_est}
\int_{\sfera} |\tan \theta|^{2\alpha} |f(\spnt{\theta}{\varphi})|^2 \,d\meas(\spnt{\theta}{\varphi}) \leq \| \opL^{\alpha/2} |T|^{-\alpha} f \|_{L^2(\sfera)}^2.
\end{equation}
\end{lemma}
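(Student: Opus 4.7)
The plan is to reduce the inequality, via an operator-monotonicity argument, to the elementary operator identity $X_2^+ X_2 = \tan^2\theta \cdot T^2$. From the formulas in Section~\ref{ss:sfera} one has $X_2 = \tan\theta \,\partial_\varphi = i\tan\theta\, T$; since multiplication by $\tan\theta$ (acting in the $\theta$-variable) commutes with $T = -i\partial_\varphi$, a direct computation of the formal adjoint with respect to $d\meas = \cos\theta\,d\theta\,d\varphi$ gives $X_2^+ = -\tan\theta\,\partial_\varphi$, so that
\[
X_2^+ X_2 = -\tan^2\theta \,\partial_\varphi^2 = |\tan\theta|^2 \, |T|^2.
\]
Combined with $\opL = X_1^+X_1 + X_2^+X_2$ from \eqref{eq:opLpolarcoords} and the positivity of $X_1^+X_1$, this yields the operator inequality $\opL \geq |\tan\theta|^2\,|T|^2$.

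Next I would restrict to the $\opL$- and $T$-invariant subspace $V\subseteq L^2(\sfera)$ spanned by $\{\SpH_{\ell,m} : (\ell,m)\in\IS,\ m\neq 0\}$, on which $|T|$ is injective and $|T|^{-1}$ is a densely defined, positive self-adjoint operator. Since $[\opL,T]=0$ and $|T|^{-1}$ commutes with the multiplication operator $|\tan\theta|^2$ (which acts only in~$\theta$), one may conjugate the inequality above by $|T|^{-1}$, obtaining on $V$
\[
\opL\,|T|^{-2} \;\geq\; |\tan\theta|^2.
\]

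The main analytic step is now to raise this inequality to the power $\alpha\in[0,1]$. Because $|\tan\theta|^2$ and $\opL$ do not commute, this is not a routine spectral-calculus manipulation; the correct tool is the Loewner--Heinz operator monotonicity of $t\mapsto t^\alpha$ on $[0,\infty)$ for $\alpha\in[0,1]$, which requires no commutativity assumption. Applying it gives
\[
|\tan\theta|^{2\alpha} \;\leq\; \bigl(\opL\,|T|^{-2}\bigr)^\alpha \;=\; \opL^\alpha \,|T|^{-2\alpha},
\]
where the last equality uses that $\opL$ and $|T|^{-2}$ are commuting positive self-adjoint operators, so their joint functional calculus behaves multiplicatively. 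Pairing with $f\in V$ and exploiting self-adjointness and commutativity of $\opL^{\alpha/2}$ and $|T|^{-\alpha}$,
\[
\int_\sfera |\tan\theta|^{2\alpha}|f|^2\,d\meas \;\leq\; \langle \opL^\alpha|T|^{-2\alpha}f,f\rangle \;=\; \|\opL^{\alpha/2}|T|^{-\alpha}f\|_{L^2(\sfera)}^2,
\]
which is exactly \eqref{eq:weighted_est}. The cases $\alpha\in\{0,1\}$ can be checked directly (the case $\alpha=0$ is trivial, and for $\alpha=1$ the inequality above collapses to $\langle X_2^+X_2\,|T|^{-1}f,|T|^{-1}f\rangle\leq\langle \opL|T|^{-1}f,|T|^{-1}f\rangle$), and together with Loewner--Heinz they cover the whole range $\alpha\in[0,1]$.

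The only delicate point is the non-commutativity of $|\tan\theta|^2$ with $\opL$, which precludes a naive diagonalisation; Loewner--Heinz is the precise ingredient that bypasses it. (As an alternative, one could instead interpolate via Stein's theorem applied to the analytic family $S_z = M_{|\tan\theta|^z}|T|^z \opL^{-z/2}$, which is unitary on $\operatorname{Re}z=0$ and, by the $\alpha=1$ computation above, contractive on $\operatorname{Re}z=1$; but the Loewner--Heinz route is cleaner.)
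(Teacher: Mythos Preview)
Your argument is correct, but it follows a different path from the paper's. The paper interprets \eqref{eq:weighted_est} as a two-weight $L^2$ bound for the synthesis operator $(c_{\ell,m})\mapsto \sum c_{\ell,m}\SpH_{\ell,m}$, checks it at the endpoints $\alpha=0$ (trivial) and $\alpha=1$ (the same computation you carry out, namely $\|X_2 g\|^2\le\langle \opL g,g\rangle$ with $g=|T|^{-1}f$), and then invokes the Stein--Weiss theorem on interpolation with change of measure. Your approach instead packages the $\alpha=1$ endpoint as the operator inequality $|\tan\theta|^2\le \opL\,|T|^{-2}$ on $V$ and passes to general $\alpha\in[0,1]$ via Loewner--Heinz, using the joint spectral calculus of $\opL$ and $|T|$ to identify $(\opL\,|T|^{-2})^\alpha$ with $\opL^\alpha|T|^{-2\alpha}$. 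Your parenthetical alternative via Stein's analytic interpolation for $S_z=M_{|\tan\theta|^z}|T|^z\opL^{-z/2}$ is closer in spirit to the paper's route, though the paper uses the change-of-measure version rather than the analytic-family version.

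Both approaches rest on the same endpoint identity $X_2^+X_2=|\tan\theta|^2\,|T|^2$ and the positivity of $X_1^+X_1$; they differ only in the interpolation device. The paper's Stein--Weiss argument is slightly lighter on functional-analytic overhead, since it avoids any discussion of form domains for the unbounded multiplication operator $|\tan\theta|^2$. Your Loewner--Heinz route is more operator-theoretic and arguably more transparent about \emph{why} the exponent range $[0,1]$ is exactly what one gets; just be aware that Loewner--Heinz is usually stated for bounded operators, so if you present it this way you should either cite a version valid for positive self-adjoint (possibly unbounded) operators in the form sense, or reduce to the bounded case (which is easy here since $|T|^{-1}$ is bounded on $V$ and one can truncate the spectrum of $\opL$).
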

\begin{proof}
Note that
\[
\| \opL^{\alpha/2} |T|^{-\alpha} f \|_{L^2(\sfera)}^2 = \sum_{\substack{(\ell,m) \in \IS \\ m \neq 0}} \lambda_{\ell,m}^{\alpha} |m|^{-2\alpha} |c_{\ell,m}|^2,
\]
where the $c_{\ell,m}$ are given by \eqref{eq:coefficienti}. Hence the inequality \eqref{eq:weighted_est} can be interpreted as a two-weighted bound for the linear operator $(c_{\ell,m})_{\ell,m} \mapsto \sum_{\ell,m} c_{\ell,m} \SpH_{\ell,m}$ and, by the Stein--Weiss theorem on interpolation with change of measure \cite{StW_interp}, it is enough to prove the inequality when $\alpha = 0$ and $\alpha = 1$.

On the other hand, if $\alpha = 0$, then we even have equality in \eqref{eq:weighted_est}.

Suppose instead that $\alpha = 1$. Note that we can write $f = T g$, where
\[
g = \sum_{\substack{(\ell,m) \in \IS \\ m \neq 0}} \frac{c_{\ell,m}}{m} \SpH_{\ell,m}.
\]
Hence
\begin{multline*}
\| \opL^{1/2} |T|^{-1} f \|_{L^2(\sfera)}^2 = \langle \opL g, g \rangle \\
= \|X_1 g\|_{L^2(\sfera)}^2 + \|X_2 g\|_{L^2(\sfera)}^2 \geq \int_{\sfera} |(\tan\theta) \, f(\spnt{\theta}{\varphi})|^2 \,d\meas(\spnt{\theta}{\varphi}),
\end{multline*}
where we have used the fact that $\opL= X_1^+ X_1 + X_2^+ X_2$ and $X_2 = i (\tan\theta) T$.
\end{proof}

\section{Pointwise and uniform bounds for spherical harmonics}\label{s:bounds}

We collect in this Section a number of uniform pointwise estimates for the spherical harmonics $\SpH_{\ell,m}$, that will be crucial in the remainder of the paper. Since $|\SpH_{\ell,m}(\spnt{\theta}{\varphi})|$ does not depend on $\varphi$, the required estimates are more conveniently expressed in terms of the functions $\tSpH_{\ell,m} : [-1,1] \to \RR$ defined by
\begin{equation}\label{eq:tildeY}
\tSpH_{\ell,m}(x) = \sqrt{\frac{2\ell+1}{4\pi} \,\frac{(\ell-m)!}{(\ell+m)!}} \, \LegF_{\ell}^{m}(x).
\end{equation}
Note that $|\SpH_{\ell,m}(\spnt{\theta}{\varphi})| = |\tSpH_{\ell,m}(\sin\theta)|$ and $\|\SpH_{\ell,m}\|_\infty = \|\tSpH_{\ell,m}\|_\infty$. Moreover, by Lemma \ref{lem:SteinWeiss}\ref{en:sphharm_sym},
\begin{equation}\label{eq:Yindexparity}
|\tSpH_{\ell,m}(x)| = |\tSpH_{\ell,-m}(x)|.
\end{equation}

We start with a summary of some  uniform weighted bounds that are available in the literature.

\begin{proposition}\label{prp:stime-classiche}
There exists a positive constant $C$ such that:
\begin{enumerate}[label=(\roman*)]
\item\label{en:stime_unif}
$\|\tSpH_{\ell,m}\|_\infty
\le
C(1+\ell)^{1/2}$ for all $(\ell,m) \in \IS$.
\item\label{en:stime_cos}
$|\tSpH_{\ell,m} (x)|\, (1-x^2)^{1/4}
\le C\,(1+\ell)^{1/4}$
for all $(\ell,m) \in \IS$ and $x \in [-1,1]$.
\item\label{en:stime_cossin}
$|\tSpH_{\ell,m} (x)|\, |x (1-x^2)|^{1/6} \le C \,(1+\ell)^{1/6}$
for all $(\ell,m) \in \IS$ and $x \in [-1,1]$.
\end{enumerate}
\end{proposition}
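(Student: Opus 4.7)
The plan is to obtain the three estimates by combining Lemma~\ref{lem:SteinWeiss}\ref{en:sphharm_sum} with known weighted bounds for the associated Legendre/Jacobi polynomials that are available in the literature.

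Part (i) follows directly from Lemma~\ref{lem:SteinWeiss}\ref{en:sphharm_sum}: since each summand is non-negative,
\[
|\SpH_{\ell,m}(z)|^2 \leq \sum_{m'=-\ell}^{\ell}|\SpH_{\ell,m'}(z)|^2 = \frac{2\ell+1}{4\pi} \lesssim 1+\ell
\]
for all $z\in\sfera$, and since by definition $|\tSpH_{\ell,m}(\sin\theta)|=|\SpH_{\ell,m}(\spnt{\theta}{\varphi})|$, (i) follows.

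For (ii) and (iii), I would first reduce to $m\geq 0$ via the symmetry \eqref{eq:Yindexparity}, and then use the Jacobi polynomial representation \eqref{eq:LegendreJacobi} inside the definition \eqref{eq:tildeY} to write $\tSpH_{\ell,m}(x)$ as an explicit factorial constant times $(1-x^2)^{m/2}P_{\ell-m}^{(m,m)}(x)$. After absorbing the factor $(1-x^2)^{m/2}$ into the $L^2$ weight, both estimates reduce to weighted $L^\infty$ bounds for the $L^2$-normalized Jacobi polynomials with respect to $(1-x^2)^m\,dx$ on $[-1,1]$. Estimate (ii), with the weight $(1-x^2)^{1/4}$, is a classical Szeg\H o--Bernstein-type bound (cf.\ \cite{Szego}). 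Estimate (iii), with the finer weight $|x(1-x^2)|^{1/6}$ and the smaller exponent $1/6$ in place of $1/4$, is a sharper Airy-scale refinement, valid in the oscillatory interior regime of $\tSpH_{\ell,m}$, which is available from the recent literature on uniform bounds for orthogonal polynomials and spherical harmonics (see, e.g., \cite{Ward,BDWZ,Haagerup}).

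The main obstacle is essentially notational: one has to carefully convert bounds expressed for $L^2$-normalized Jacobi polynomials with respect to $(1-x^2)^m\,dx$ into bounds for $\tSpH_{\ell,m}$ defined by \eqref{eq:tildeY}, which involves simplifying the ratio of factorials $(\ell\pm m)!/\ell!$ via Stirling's formula to ensure that the stated exponents of $(1+\ell)$ are obtained uniformly in both $\ell$ and $m$. Beyond this routine bookkeeping, no substantially new analysis is required at this stage; the more delicate pointwise bounds for $\tSpH_{\ell,m}$ that separate the Hermite and Bessel regimes, and which drive the weighted Plancherel estimate of Section~\ref{s:weighted}, will be established in the subsequent subsections.
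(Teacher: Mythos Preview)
Your approach matches the paper's: part (i) is exactly Lemma~\ref{lem:SteinWeiss}\ref{en:sphharm_sum}, and parts (ii)--(iii) are quoted from the literature on Jacobi polynomials/spherical harmonics. One correction worth noting: part (ii) is \emph{not} a classical Szeg\H{o}--Bernstein result; the uniform-in-$m$ weighted bound is recent, due to Krasikov \cite{Kra} (toward the Erd\'elyi--Magnus--Nevai conjecture) as used in \cite{Ward}, and independently to Haagerup--Schlichtkrull \cite{Haagerup}---you will not find it in \cite{Szego}. For (iii) the paper simply cites \cite[Theorem~1]{BDWZ}. Since these references already state their bounds directly for spherical harmonics (or normalized associated Legendre functions), the Stirling bookkeeping you anticipate is not needed.
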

\begin{proof}
\ref{en:stime_unif}. This is an immediate consequence of Lemma \ref{lem:SteinWeiss}\ref{en:sphharm_sum}.

\ref{en:stime_cos}. This is derived in \cite[Proposition 6]{Ward} from estimates of \cite{Kra} on Jacobi polynomials, and is independently proved in \cite{Haagerup}.

\ref{en:stime_cossin} See \cite[Theorem 1]{BDWZ}.
\end{proof}

In the remainder of this Section we are going to prove some more refined estimates. Here and subsequently,
$a_{\ell, m}$ and $b_{\ell,m}$ will denote the numbers in $[0,1]$ defined by \eqref{eq:criticalpoints}.

We consider first the range of $(\ell,m) \in \IS$ where $|m| \geq \epsilon (\ell+1/2)$ for some $\epsilon \in (0,1)$. In this range, Olver \cite{Olver75,Olver} obtained a uniform asymptotic approximation of Legendre functions for large $\ell$ in terms of Hermite functions.

We recall that, for all $\nu \in \NN$, the $\nu$-th Hermite function is defined by
\begin{equation}\label{eq:Hermite-def}
h_\nu (x)= (-1)^{\nu} (\nu! \, 2^\nu \sqrt{\pi})^{-1/2} e^{x^2/2} \left(\frac{d}{dx}\right)^\nu e^{-x^2}.
\end{equation}
We shall need the following asymptotic properties.
\begin{lemma}\label{lem:Hermite-Th}
Set $N=2\nu+1$.
Then
\begin{equation*}
\left|h_\nu (x)\right| \leq \begin{cases}
C(N^{1/3}+|x^2 -N|)^{-1/4} &\text{ for all $x\in\RR$,}\\
C\exp (-c x^2) &\text{ for $x^2\geq 2N$.}
\end{cases}
\end{equation*}
\end{lemma}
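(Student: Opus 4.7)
The plan is to derive both inequalities from classical sharp uniform pointwise bounds for Hermite functions, as found in Askey--Wainger \cite{AW} and Muckenhoupt \cite{MuSp}. The starting point is the two-regime estimate that, on the oscillatory range $|x| \leq \sqrt{2N}$, takes the form
\[
|h_\nu(x)| \lesssim \left(N^{1/3} + \sqrt{N}\, \bigl||x|-\sqrt{N}\bigr|\right)^{-1/4},
\]
while, on the sub-Gaussian tail $|x| \geq \sqrt{2N}$, it takes the form
\[
|h_\nu(x)| \lesssim \exp(-c x^2).
\]
The first estimate reflects the Liouville--Green/Airy analysis of the Hermite ODE $h_\nu'' + (N - x^2) h_\nu = 0$, where the transitional factor $N^{1/3}$ accounts for the width of the Airy window around the turning points $\pm \sqrt{N}$; the second estimate is the classical Cramér-type sub-Gaussian tail.

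To obtain the first inequality of the lemma, I would split into the two ranges above. On the oscillatory range $|x|\leq \sqrt{2N}$, I would use the factorisation
\[
|x^2-N| = \bigl||x|-\sqrt{N}\bigr|\,(|x|+\sqrt{N})
\]
together with the equivalence $|x|+\sqrt{N} \simeq \sqrt{N}$, valid for $|x| \leq \sqrt{2N}$, to convert the classical denominator into the desired one:
\[
N^{1/3} + |x^2-N| \simeq N^{1/3} + \sqrt{N}\,\bigl||x|-\sqrt{N}\bigr|.
\]
On the tail range $|x| \geq \sqrt{2N}$, one has $|x^2-N| \geq N \geq N^{1/3}$, hence $(N^{1/3}+|x^2-N|)^{-1/4} \simeq |x^2-N|^{-1/4} \lesssim |x|^{-1/2}$, which is easily dominated by the Gaussian decay $\exp(-c x^2)$, since $N \lesssim x^2$ in this range. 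The second inequality of the lemma is nothing but the sub-Gaussian tail estimate itself, after possibly relabelling the constant $c$.

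The only real obstacle is bookkeeping: the cited references state the bounds with slightly different normalisations, and it is necessary to verify that the implicit constants survive uniformly in $\nu$ under the rescaling and the splitting into the two regimes. No conceptually new input is required; both halves of the lemma are essentially classical, and the statement here is simply a convenient unified packaging of the Askey--Wainger and Muckenhoupt bounds suitable for the later application to Olver's asymptotics of $\LegF_\ell^m$ in the Hermite regime.
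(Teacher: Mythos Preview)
Your approach is correct and is essentially what the paper does: the paper's proof is a bare citation of \cite[Theorem~B]{AW} and \cite[Lemma~1.5.1]{Th}, and you are just unpacking those classical bounds. Two small fixes are needed. First, the key \cite{MuSp} in this paper points to Muldoon--Spigler on zeros of cylinder functions, not to Muckenhoupt; the appropriate second reference for Hermite function estimates is Thangavelu's monograph \cite{Th}. Second, in your tail step the chain of inequalities is written in the wrong direction: to deduce the first bound from the Gaussian one on $x^2\geq 2N$ you need
\[
|h_\nu(x)| \lesssim \exp(-cx^2) \lesssim |x|^{-1/2} \lesssim (N^{1/3}+|x^2-N|)^{-1/4},
\]
where the middle inequality uses $|x|\geq\sqrt{2}$ and the last uses $N^{1/3}+|x^2-N|\leq 2x^2$. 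With these corrections your argument stands.
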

\begin{proof}
See \cite[Theorem B]{AW} or \cite[Lemma 1.5.1]{Th}.
\end{proof}

As a consequence of Olver's approximation, we obtain the following estimates for spherical harmonics.

\begin{proposition}\label{prp:est_olver}
There exist constants $K \in [2,\infty)$ and $c \in (0,1)$ such that, for all $\epsilon \in (0,1)$, there exist $C_\epsilon$ such that, for all $(\ell,m) \in \IS$ with
$|m| \geq \epsilon (\ell+1/2)$,
\begin{equation}\label{eq:est_olver}
|\tSpH_{\ell,m}(x)| \leq C_\epsilon \begin{cases}
((1+\ell)^{-1} + |x^2-a_{\ell,m}^2|)^{-1/4} &\text{for all $x\in[-1,1]$,}\\
|x|^{-1/2} \exp(-c \ell x^2) &\text{for $|x| \geq K \, a_{\ell,m}$.}
\end{cases}
\end{equation}
\end{proposition}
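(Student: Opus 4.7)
The approach is to invoke Olver's uniform asymptotic expansion of the associated Legendre function $\LegF_\ell^m$ in terms of Hermite functions \cite{Olver75, Olver}, valid precisely in the regime $|m|/(\ell+1/2) \geq \epsilon$. By \eqref{eq:Yindexparity} we may assume $m \geq 0$. Setting $n = \ell - m$ and $N = 2n+1$, Olver constructs a smooth, strictly increasing Liouville--Green change of variable $\phi = \phi_{\ell,m} : (-1,1) \to \RR$, characterised by matching phase integrals between the Legendre and Hermite ODEs, that sends the turning points $\pm a_{\ell,m}$ to $\pm\sqrt N$. The corresponding expansion, after combining the explicit leading coefficient in Olver's formula with the spherical-harmonic normalisation in \eqref{eq:tildeY} (and applying Stirling's formula to control the ratio $(\ell-m)!/(\ell+m)!$), produces the inequality
\[
|\tSpH_{\ell,m}(x)| \leq C_\epsilon \, (\phi'(x))^{-1/2} \, |h_n(\phi(x))|
\]
uniformly in $x \in (-1,1)$ and in $(\ell,m)$ in the allowed regime.

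The two bounds in \eqref{eq:est_olver} then follow from Lemma \ref{lem:Hermite-Th}. Differentiating the defining phase-integral identity for $\phi$ yields the pointwise relation
\[
\phi'(x)^2 \, (N - \phi(x)^2) = (\ell+1/2)^2 \, \frac{a_{\ell,m}^2 - x^2}{1-x^2},
\]
from which one extracts the scaling $|\phi(x)^2 - N| \simeq_\epsilon (1+\ell)|x^2 - a_{\ell,m}^2|$ globally on $(-1,1)$, together with corresponding pointwise estimates for the WKB prefactor $(\phi'(x))^{-1/2}$. Plugging the global Hermite estimate $|h_n(y)| \lesssim (N^{1/3} + |y^2 - N|)^{-1/4}$ into the previous display and simplifying yields the first bound $((1+\ell)^{-1} + |x^2 - a_{\ell,m}^2|)^{-1/4}$. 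For the second bound, one chooses $K \geq 2$ large enough that $|x| \geq K a_{\ell,m}$ forces $\phi(x)^2 \geq 2N$; then the Gaussian clause of Lemma \ref{lem:Hermite-Th} gives $|h_n(\phi(x))| \lesssim \exp(-c \, \phi(x)^2)$, and since $\phi(x)^2 \gtrsim_\epsilon \ell \, x^2$ in this range while $(\phi'(x))^{-1/2} \simeq_\epsilon |x|^{-1/2}$ (as the phase integral becomes purely exponential away from the turning points), one obtains $|x|^{-1/2} \exp(-c \, \ell \, x^2)$.

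The main obstacle is the careful book-keeping required to translate Olver's formula — phrased in \cite{Olver75,Olver} in terms of parabolic cylinder functions $U(a,z)$ with real parameter $a$ — into a clean statement involving the Hermite functions $h_n$ normalised as in \eqref{eq:Hermite-def}, and to verify that the explicit leading coefficient in Olver's expansion, combined with $\sqrt{(\ell-m)!/(\ell+m)!}$ from \eqref{eq:tildeY}, produces a constant bounded uniformly throughout the full range $0 \leq n \leq (1-\epsilon)\ell$. The scaling properties of $\phi$ and $\phi'$ must also be made uniform across the entire regime (with the delicate point being the behaviour as $n$ approaches $0$ or approaches $(1-\epsilon)\ell$), and the error term in Olver's asymptotic must be verified to be absorbable into the final bound with the claimed explicit dependence on $\ell$ and $m$.
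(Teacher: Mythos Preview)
Your overall approach---Olver's uniform asymptotic in parabolic cylinder/Hermite functions, followed by the Askey--Wainger bounds of Lemma~\ref{lem:Hermite-Th}---is the same as the paper's. However, your central displayed inequality
\[
|\tSpH_{\ell,m}(x)| \leq C_\epsilon \, (\phi'(x))^{-1/2} \, |h_n(\phi(x))|
\]
is not what Olver's expansion actually delivers, and this is not a book-keeping issue but a genuine gap. Olver's formula has the form $U + \bE^{-1}\bM \cdot \bigO(\ell^{-2/3})$, where the auxiliary quotient $\bE^{-1}\bM$ satisfies $|U| \leq \bE^{-1}\bM$ pointwise; in the oscillatory region $\bE^{-1}\bM$ is an \emph{envelope} and can be strictly larger than $|U| \simeq |h_n|$. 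So you cannot absorb the error into a multiple of $|h_n(\phi)|$ uniformly on $(-1,1)$. The paper handles this by two separate mechanisms: for the first inequality it bounds both main and error terms by $\bM$ and invokes Olver's explicit estimate showing $|z^2-N|^{1/4}\bM(-N/2,z\sqrt2)/\sqrt{\Gamma(N/2+1/2)}$ is uniformly bounded; for the second (Gaussian) inequality it uses the crucial identity $\bE^{-1}\bM = \sqrt{2}\,U$ valid only when the argument exceeds a threshold $\bar\rho(b) \sim \sqrt{4b}$, which is precisely what forces the constant $K$ and makes the reduction to $|h_n|$ legitimate there.

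Two further points. Your claimed global equivalence $|\phi(x)^2 - N| \simeq_\epsilon (1+\ell)\,|x^2 - a_{\ell,m}^2|$ fails near $x = \pm 1$: the defining phase integral $\int_a^x (t^2-a^2)^{1/2}(1-t^2)^{-1}\,dt$ diverges, so $\phi(x)\to\infty$ while $|x^2-a_{\ell,m}^2|$ stays bounded by $1$. (Relatedly, your ODE identity should have $(1-x^2)^2$ in the denominator, not $(1-x^2)$.) This does not break the argument---away from the turning point one only needs the weaker bound $|x^2-a_{\ell,m}^2|^{-1/4}$---but it means your ``simplifying'' step is incorrect as written. Finally, to obtain the $(1+\ell)^{1/4}$ part of the first bound the paper does \emph{not} extract it from Olver's expansion; it imports the independent weighted estimate $|\tSpH_{\ell,m}(x)|(1-x^2)^{1/4} \lesssim (1+\ell)^{1/4}$ of Proposition~\ref{prp:stime-classiche}\ref{en:stime_cos} and combines it with the Olver bound $|x^2-a_{\ell,m}^2|^{-1/4}$. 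You should either do the same or explain carefully how the $N^{1/3}$ term in the Hermite bound, together with a local (not global) version of your scaling near the turning point, yields the desired uniform estimate.
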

\begin{proof}
Thanks to the various parity properties of spherical harmonics, it is enough to prove the above estimate when $m > 0$, $x \in [0,1)$ and $\tSpH_{\ell,m}$ is replaced by $\tSpH_{\ell,-m}$.

According to \cite[eqs.\ (3.4)-(3.6)]{Olver}, the following asymptotic approximation holds:
\begin{equation}\label{eq:olver_approx}
\begin{split}
\LegF_\ell^{-m}(x) &= \kappa_{\ell,m} \left(\frac{\zeta_{\ell,m}(x)^2-\alpha_{\ell,m}^2}{x^2-a_{\ell,m}^2}\right)^{1/4} \\
& \times [ U(-(\ell-m+1/2),\zeta_{\ell,m}(x) \sqrt{2\ell+1}) \\
&+ \bE^{-1} \bM(-(\ell-m+1/2),\zeta_{\ell,m}(x) \sqrt{2\ell+1}) \, \bigO(\ell^{-2/3})]
\end{split}
\end{equation}
for $\ell \to \infty$, uniformly in $x\in[0,1)$, $(\ell,m) \in \IS$ and $m \geq \epsilon (\ell+1/2)$. Here $U$ is the parabolic cylinder function (see, e.g., \cite[\S 5]{Olver75}), $\bE^{-1} \bM$ denotes the pointwise ratio of the auxiliary functions $\bM$ and $\bE$ defined in \cite[\S 5.8]{Olver75}, the numbers $\kappa_{\ell,m}$ and $\alpha_{\ell,m}$ are given by
\[
\alpha_{\ell,m} = \sqrt{2 \frac{\ell-m+1/2}{\ell+1/2}},\qquad
\kappa_{\ell,m} = \frac{(\ell+1/2)^{-1/4} \, 2^{-\frac{\ell+m}{2}}}{\Gamma(\frac{\ell+m}{2} + \frac{3}{4})},
\]
and $\zeta_{\ell,m} : [0,1) \to [0,\infty)$ is the increasing bijection satisfying $\zeta_{\ell,m}(a_{\ell,m}) = \alpha_{\ell,m}$ and implicitly defined by
\begin{align}
\int_{\alpha_{\ell,m}}^{\zeta_{\ell,m}(x)} (\tau^2-\alpha_{\ell,m}^2)^{1/2} \,d\tau &= \int_{a_{\ell,m}}^x \frac{(t^2-a_{\ell,m}^2)^{1/2}}{1-t^2}{ dt} \qquad\text{($a_{\ell,m} \leq x < 1$),} \label{eq:def_zeta_upper}\\
\int^{\alpha_{\ell,m}}_{\zeta_{\ell,m}(x)} (\alpha_{\ell,m}^2-\tau^2)^{1/2} \,d\tau &= \int^{a_{\ell,m}}_x \frac{(a_{\ell,m}^2-t^2)^{1/2}}{1-t^2}{ dt} \qquad\text{($0 \leq x \leq a_{\ell,m}$)}
\end{align}
(see \cite[eqs.\ (2.7)-(2.10)]{Olver}; note that in \cite{Olver} the symbols $\zeta$, $a$, $\alpha$ are used in place of our $\zeta_{\ell,m}$, $a_{\ell,m}$, $\alpha_{\ell,m}$).

By \eqref{eq:tildeY}, we can rewrite the approximation \eqref{eq:olver_approx} in terms of $\tSpH_{\ell,-m}$ as follows:
\begin{equation}\label{eq:olver_approx_Y}
\begin{split}
|x^2 - a_{\ell,m}^2|^{1/4} \tSpH_{\ell,-m}(x) &= \tilde \kappa_{\ell+m} \frac{|(\zeta_{\ell,m}(x)\sqrt{\ell+1/2})^2-2(\ell-m+1/2)|^{1/4}}{\sqrt{(\ell-m)!}} \\
& \times [ U(-(\ell-m+1/2),\zeta_{\ell,m}(x) \sqrt{2\ell+1}) \\
&+ \bE^{-1} \bM(-(\ell-m+1/2),\zeta_{\ell,m}(x) \sqrt{2\ell+1}) \, \bigO(\ell^{-2/3})],
\end{split}
\end{equation}
where, for all $k \in \NN$,
\[
\tilde \kappa_{k} = \sqrt{\frac{k!}{2\pi}} \, \frac{2^{-\frac{k}{2}}}{\Gamma(\frac{k}{2} + \frac{3}{4})} \simeq 1
\]
by Stirling's approximation.

We now show that the right-hand side of \eqref{eq:olver_approx_Y} is uniformly bounded (in absolute value). Note that from \cite[\S 5.8]{Olver75} it follows easily that
\[
|U| \leq \bE^{-1} \bM \leq \bM
\]
pointwise. Therefore it is enough to show that
\[
\frac{|z^2-N|^{1/4}}{\sqrt{\Gamma(N/2+1/2)}} \, \bM(-N/2,z \sqrt{2})
\]
is uniformly bounded for $z \in [0,\infty)$ and $N \in [1,\infty)$ (consider the substitutions $N = 2\ell-2m+1$, $z= \zeta_{\ell,m}(x) \sqrt{\ell+1/2}$). This follows from the estimate for $\bM$ given in \cite[eq.\ after (6.12)]{Olver75} and applied with $\mu = \sqrt{N}$ and $y = z/\sqrt{N}$: indeed from that estimate we deduce that
\[
\frac{|z^2-N|^{1/2} \bM^2(-N/2,z\sqrt{2})}{\Gamma(N/2+1/2)} \leq \kappa \frac{N^{1/3} |\eta|^{1/2}}{1+N^{1/3} |\eta|^{1/2}} \leq \kappa
\]
for some universal constant $\kappa$ and some $\eta \in \RR$ depending on $N$ and $z$ (see \cite[eq.\ (5.14)]{Olver75} for the definition of $\eta$ as a function of $y$ and $\mu$).

Since the right-hand side of \eqref{eq:olver_approx_Y} is uniformly bounded, we deduce that
\begin{equation}\label{eq:olver_bound_Y}
|\tSpH_{\ell,-m}(x)| \leq C_\epsilon |x^2-a_{\ell,m}^2|^{-1/4}
\end{equation}
uniformly in $x\in[0,1)$, $(\ell,m) \in \IS$ and $m \geq \epsilon(\ell+1/2)$. Note now that $a_{\ell,m} \leq 1-\delta_\epsilon = \sqrt{1-\epsilon^2}$ for $m \geq \epsilon(\ell+1/2)$. In particular from \eqref{eq:olver_bound_Y} it follows that
\[
|\tSpH_{\ell,-m}(x)| \leq C_\epsilon
\]
uniformly in $x\in[1-\delta_\epsilon/2,1)$, $(\ell,m) \in \IS$ and $m \geq \epsilon(\ell+1/2)$. On the other hand, from Proposition \ref{prp:stime-classiche}\ref{en:stime_cos} it follows that
\[
|\tSpH_{\ell,-m}(x)| \leq C_\epsilon (1+\ell)^{1/4}
\]
uniformly in $x\in[0,1-\delta_\epsilon/2]$ and $(\ell,m) \in \IS$. By combining the last two estimates, we obtain that
\[
|\tSpH_{\ell,-m}(x)| \leq C_\epsilon (1+\ell)^{1/4}
\]
uniformly in $x\in[0,1)$, $(\ell,m) \in \IS$ and $m \geq \epsilon(\ell+1/2)$. This estimate can in turn be combined with \eqref{eq:olver_bound_Y} to give the first inequality in \eqref{eq:est_olver}.

We now turn to the second inequality in \eqref{eq:est_olver}. For this we need a better control of the ``error term'' $\bE^{-1} \bM$ in \eqref{eq:olver_approx_Y}. Note that, according to \cite[\S 5.8]{Olver75}, one has
\begin{equation}\label{eq:olver_error_fine_control}
\bE^{-1} \bM (-b,w) = \sqrt{2} U(-b,w)
\end{equation}
for all $b \in [0,\infty)$ and $w \in [\bar\rho(b),\infty)$, where $\bar\rho : [0,\infty) \to [0,\infty)$ is a continuous function satisfying
\[
\bar\rho(b) = \sqrt{4b} + \bigO(b^{-1/6})
\]
for $b \to \infty$ (please note that $\bar\rho(b)$ here corresponds to $\rho(-b)$ in the notation of \cite{Olver75}). In particular there exists a constant
$\widetilde K \in [1,\infty)$ such that
\[
\bar\rho(b) \leq \widetilde K \sqrt{4b}
\]
for all $b \in [1/2,\infty)$ and therefore the identity \eqref{eq:olver_error_fine_control} holds for all $b \in [1/2,\infty)$ and $w \in [\widetilde K\sqrt{4b},\infty)$. If we take $b = \ell-m+1/2$, then we see that $\sqrt{4b} = \alpha_{\ell,m} \sqrt{2\ell+1}$; consequently
\[
U(-(\ell-m+1/2),\zeta_{\ell,m}(x) \sqrt{2\ell+1}) =
\frac{1}{\sqrt{2}} \bE^{-1} \bM(-(\ell-m+1/2),\zeta_{\ell,m}(x) \sqrt{2\ell+1})
\]
whenever $\zeta_{\ell,m}(x) \geq \widetilde K \alpha_{\ell,m}$, and therefore from \eqref{eq:olver_approx_Y} we deduce that
\begin{equation}\label{eq:olver_better_approx_Y}
\begin{split}
|x^2 - a_{\ell,m}^2|^{1/4} |\tSpH_{\ell,-m}(x)| &\leq C_\epsilon \frac{|(\zeta_{\ell,m}(x)\sqrt{\ell+1/2})^2-2(\ell-m+1/2)|^{1/4}}{\sqrt{(\ell-m)!}} \\
& \times | U(-(\ell-m+1/2),\zeta_{\ell,m}(x) \sqrt{2\ell+1}) |
\end{split}
\end{equation}
uniformly in $(\ell,m) \in \IS$, $m \geq \epsilon(\ell+1/2)$ and $x \in [0,1)$ such that $\zeta_{\ell,m}(x) \geq \widetilde K \alpha_{\ell,m}$.

Note now that, for all $\nu \in \NN$ and $z \in \RR$,
\[
U(-(\nu+1/2),z \sqrt{2}) = (\nu! \sqrt{\pi})^{1/2} h_\nu(z)
\]
(compare \cite[eq.\ (7.22)]{Te} and \cite[eq.\ (1.1.2)]{Th} with \eqref{eq:Hermite-def} above). Hence \eqref{eq:olver_better_approx_Y} can be rewritten as
follows:
\begin{equation}\label{eq:olver_hermite_approx_Y}
\begin{split}
|x^2 - a_{\ell,m}^2|^{1/4} |\tSpH_{\ell,-m}(x)| &\leq C_\epsilon |(\zeta_{\ell,m}(x)\sqrt{\ell+1/2})^2-(2(\ell-m)+1)|^{1/4} \\
& \times | h_{\ell-m}(\zeta_{\ell,m}(x) \sqrt{\ell+1/2}) |
\end{split}
\end{equation}
uniformly in $(\ell,m) \in \IS$, $m \geq \epsilon(\ell+1/2)$ and $x \in [0,1)$ such that $\zeta_{\ell,m}(x) \geq \widetilde K \alpha_{\ell,m}$. Moreover, if $\zeta_{\ell,m}(x) \geq \sqrt{2} \widetilde K \alpha_{\ell,m}$, then $(\zeta_{\ell,m}(x) \sqrt{\ell+1/2})^2 \geq 2 (2(\ell-m)+1)$; hence, by combining \eqref{eq:olver_hermite_approx_Y} and Lemma \ref{lem:Hermite-Th}, we deduce that there exists a constant $c \in (0,\infty)$ such that
\begin{equation}\label{eq:olver_exp_approx_Y}
\begin{split}
|x^2 - a_{\ell,m}^2|^{1/4} |\tSpH_{\ell,-m}(x)|
&\leq C_\epsilon |(\zeta_{\ell,m}(x)\sqrt{\ell+1/2})^2-(2(\ell-m)+1)|^{1/4} \\
& \times \exp(-2c(\ell+1/2) \zeta_{\ell,m}(x)^2)  \\
&\leq C_\epsilon \exp(-c\ell \zeta_{\ell,m}(x)^2)
\end{split}
\end{equation}
uniformly in $(\ell,m) \in \IS$, $m \geq \epsilon(\ell+1/2)$ and $x \in [0,1)$ such that $\zeta_{\ell,m}(x) \geq \sqrt{2} \widetilde K \alpha_{\ell,m}$.

Note that
\[
a_{\ell,m} \leq \alpha_{\ell,m} \leq \sqrt{2} a_{\ell,m}.
\]
We now claim that
\begin{equation}\label{eq:zeta_x}
\zeta_{\ell,m}(x) \geq x \qquad\text{for all } x \geq a_{\ell,m}.
\end{equation}
Assuming this claim, we see that, if $x \geq 2 \widetilde K a_{\ell,m}$, then
\[
\zeta_{\ell,m}(x) \geq 2 \widetilde K a_{\ell,m} \geq \sqrt{2} \widetilde K \alpha_{\ell,m},
\]
and moreover
\[
x^2 - a_{\ell,m}^2 \geq x^2/2.
\]
Hence from \eqref{eq:olver_exp_approx_Y} we deduce that
\[\begin{split}
|x|^{1/2} |\tSpH_{\ell,-m}(x)|
&\leq 2^{1/4} |x^2 - a_{\ell,m}^2|^{1/4} |\tSpH_{\ell,-m}(x)| \\
&\leq C_\epsilon \exp(-c\ell \zeta_{\ell,m}(x)^2) \\
&\leq C_\epsilon \exp(-c\ell x^2)
\end{split}\]
uniformly in $(\ell,m) \in \IS$, $m \geq \epsilon(\ell+1/2)$ and $x \in [0,1)$ such that $x \geq 2 \widetilde K a_{\ell,m}$; this yields the second inequality in \eqref{eq:est_olver}, with $K = 2 \widetilde K$.

We are left with the proof of \eqref{eq:zeta_x}. Indeed, by \eqref{eq:def_zeta_upper},
\[
\int_{\alpha_{\ell,m}}^{\zeta_{\ell,m}(x)} (\tau^2-\alpha_{\ell,m}^2)^{1/2} \,d\tau = \int_{a_{\ell,m}}^x \frac{(t^2-a_{\ell,m}^2)^{1/2}}{1-t^2} \,dt \geq \int_{\alpha_{\ell,m}}^{\max\{x,\alpha_{\ell,m}\}} (t^2-\alpha_{\ell,m}^2)^{1/2} \,dt,
\]
since $\alpha_{\ell,m} \geq a_{\ell,m}$, and \eqref{eq:zeta_x} follows.
\end{proof}

In the range $|m| \leq \epsilon(\ell+1/2)$, the behaviour of spherical harmonics is different and a uniform asymptotic expression is available \cite{BoydDunster} in terms of Bessel functions. Recall that the Bessel function of the first kind $J_\nu$ of order $\nu \in (-1,\infty)$ is given by
\[
J_\nu(z) = \sum_{m=0}^\infty \frac{(-1)^m (z/2)^{\nu+2m}}{m! \, \Gamma(m+\nu+1)}.
\]
From the power series development it is immediate to obtain the following bound.

\begin{lemma}\label{lem:besselestimate}
For all $\nu \in [-1/2,\infty)$ and $z \in \RR$,
\[
|J_\nu(z)| \leq \frac{|z/2|^\nu}{\Gamma(\nu+1)}
\]
\end{lemma}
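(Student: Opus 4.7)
The plan is to establish the bound via Poisson's integral representation of $J_\nu$, which is itself obtained by reorganising the power series using the Beta integral. Since $J_\nu(-z) = e^{\pm i\pi\nu} J_\nu(z)$ (a unit-modulus factor, as can be checked directly from the series by tracking how $(-z/2)^{\nu+2m}$ differs from $(z/2)^{\nu+2m}$), we have $|J_\nu(-z)| = |J_\nu(z)|$, and so it suffices to treat $z \geq 0$. For $\nu > -1/2$ and $z \geq 0$ the starting point is Poisson's integral
\[
J_\nu(z) = \frac{(z/2)^\nu}{\sqrt{\pi}\,\Gamma(\nu+1/2)} \int_{-1}^1 (1-t^2)^{\nu-1/2}\cos(zt)\,dt.
\]
From here the bound is immediate: estimating $|\cos(zt)| \leq 1$ and applying the Beta integral identity
\[
\int_{-1}^1 (1-t^2)^{\nu-1/2}\,dt = B\!\left(\tfrac{1}{2},\nu+\tfrac{1}{2}\right) = \frac{\sqrt{\pi}\,\Gamma(\nu+1/2)}{\Gamma(\nu+1)}
\]
yields $|J_\nu(z)| \leq (z/2)^\nu/\Gamma(\nu+1)$.

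To match the paper's claim that the bound is ``immediate from the power series development'', I would derive the Poisson representation itself by expanding $\cos(zt) = \sum_{k\geq 0} (-1)^k (zt)^{2k}/(2k)!$, integrating termwise against $(1-t^2)^{\nu-1/2}$ using
\[
\int_{-1}^1 (1-t^2)^{\nu-1/2} t^{2k}\,dt = \frac{\Gamma(k+1/2)\,\Gamma(\nu+1/2)}{\Gamma(k+\nu+1)},
\]
and then invoking Legendre's duplication $\Gamma(k+1/2) = \sqrt{\pi}\,(2k)!/(4^k k!)$ to recognise the resulting series as $\sqrt{\pi}\,\Gamma(\nu+1/2)\,(z/2)^{-\nu} J_\nu(z)$, term by term. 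In particular every step in the chain --- Poisson representation plus final estimate --- is either a Beta function computation or a direct rearrangement of the power series that defines $J_\nu$.

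The endpoint case $\nu = -1/2$ must be handled separately, since the weight $(1-t^2)^{\nu-1/2}$ is no longer integrable at $\pm 1$. For this value one uses the closed form $J_{-1/2}(z) = \sqrt{2/(\pi z)}\,\cos z$, after which the asserted inequality reduces to $|\cos z| \leq 1$, with the constant $\Gamma(1/2) = \sqrt{\pi}$ matching the right-hand side exactly. There is no real obstacle in this proof: the only care needed is to keep track of the $\Gamma$-factors when moving between the series and the integral representation, and to separate out the boundary case where Poisson's formula degenerates.
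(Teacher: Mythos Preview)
Your proof is correct. The paper does not actually give a proof of this lemma---it only remarks that the bound is ``immediate'' from the power series definition of $J_\nu$---and your argument via Poisson's integral representation (derived in turn from the series by termwise integration against the Beta weight) is the standard way to make that remark precise; the separate treatment of the endpoint $\nu=-1/2$ is appropriate.
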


As a consequence of Boyd and Dunster's approximation, we obtain the following estimates for spherical harmonics.

\begin{proposition}\label{prp:est_boyddunster}
Let $\epsilon \in (0,1)$. For all $(\ell,m) \in \IS$ with $|m| \leq \epsilon (\ell+1/2)$,
\begin{equation}\label{eq:est_boyddunster}
|\tSpH_{\ell,m}(x)|
\leq C_\epsilon \begin{cases}
\left( \frac{(1+|m|)^{4/3}}{(1+\ell)^{2}} + |x^2-a_{\ell,m}^2|\right)^{-1/4} & \text{for all $x \in[-1,1]$,}\\
b_{\ell,m}^{-1/2} \, 2^{-m} &\text{if $\sqrt{1-x^2} \leq b_{\ell,m}/4$.}
\end{cases}
\end{equation}
\end{proposition}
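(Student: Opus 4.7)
The proof follows the architecture of Proposition \ref{prp:est_olver}: one invokes a uniform asymptotic expansion of $\LegF_\ell^m$, valid in the regime $|m| \leq \epsilon(\ell+1/2)$, and combines it with pointwise bounds for the special function appearing in the expansion. In our case, the appropriate expansion is due to Boyd and Dunster \cite{BoydDunster}, and expresses $\LegF_\ell^m(x)$ in terms of Bessel functions $J_m$ evaluated at a suitably rescaled argument $w_{\ell,m}(x)$, where the turning point correspondence $x = a_{\ell,m} \leftrightarrow w = m$ is built into the change of variables (defined implicitly via an integral relation of the same kind as \eqref{eq:def_zeta_upper}). Using the parities $\LegF_\ell^m(-x) = (-1)^{\ell+m} \LegF_\ell^m(x)$ and \eqref{eq:Yindexparity}, one first reduces to $m \geq 0$ and $x \in [0,1)$; then, incorporating the normalization in \eqref{eq:tildeY} and Stirling's approximation for the factorials $\sqrt{(\ell-m)!/(\ell+m)!}$, one obtains a uniform asymptotic expansion for $\tSpH_{\ell,m}$ itself.

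To prove the first bound in \eqref{eq:est_boyddunster}, I would apply standard uniform pointwise estimates for Bessel functions: $|J_m(w)| \lesssim m^{-1/3}$ at the turning point $w = m$, with a transition zone of width $m^{1/3}$, and $|J_m(w)| \lesssim w^{-1/2}$ in the oscillatory region $w \gg m$. Transferring these estimates from $w$ to $x$ through the Boyd--Dunster change of variables, whose derivative at the turning point is of order $\ell+1/2$, the scale $m^{1/3}$ in $w$ becomes $m^{1/3}/(\ell+1/2)$ in $x$, so that $|x^2 - a_{\ell,m}^2|$ is compared against a quantity of order $a_{\ell,m} m^{1/3}/(\ell+1/2)$; this produces precisely the term $(1+m)^{4/3}/(1+\ell)^2$ inside the $(\cdot)^{-1/4}$. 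In the small residual range where the Boyd--Dunster expansion is not effective (very close to $x = 1$), one patches with Proposition \ref{prp:stime-classiche}\ref{en:stime_cos}, exactly as in the final step of the proof of Proposition \ref{prp:est_olver}.

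For the second bound, the hypothesis $\sqrt{1-x^2} \leq b_{\ell,m}/4$ places $x$ very close to the pole $x = 1$, in the region where the Boyd--Dunster argument $w_{\ell,m}(x)$ becomes small; a direct analysis of the implicit relation shows $w_{\ell,m}(x) \leq m/4$ under this hypothesis. Applying Lemma \ref{lem:besselestimate} together with Stirling's formula then gives
\[
|J_m(w_{\ell,m}(x))| \leq \frac{(w_{\ell,m}(x)/2)^m}{\Gamma(m+1)} \lesssim \frac{(e/8)^m}{\sqrt{1+m}} \leq \frac{2^{-m}}{\sqrt{1+m}},
\]
while the prefactor of the expansion, once normalized for $\tSpH_{\ell,m}$ via \eqref{eq:tildeY} and Stirling, is of order $b_{\ell,m}^{-1/2} \sqrt{1+m}$ in this regime; the product yields the claimed bound $C_\epsilon b_{\ell,m}^{-1/2} 2^{-m}$.

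The main technical obstacle lies in the bookkeeping of factorial and power-of-two constants. Specifically, one must verify that the Boyd--Dunster normalization constant, the Legendre factor $\sqrt{(\ell-m)!/(\ell+m)!}$ from \eqref{eq:tildeY}, and the Bessel bound of Lemma \ref{lem:besselestimate} combine, through Stirling's formula, to produce exactly the clean powers $2^{-m}$ and $b_{\ell,m}^{-1/2}$ in the near-pole bound, with no spurious dependence on $\ell$ or $m$; analogous care is needed to check that the first bound emerges with the precise exponent $4/3$ in $(1+m)^{4/3}/(1+\ell)^2$, rather than any nearby variant arising from a mismatch in the Jacobian of the Boyd--Dunster substitution at the turning point.
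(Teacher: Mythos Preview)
Your architecture is right and matches the paper: Boyd--Dunster expansion, Stirling to absorb the normalization, the $M_m$-type bound for the first inequality, and Lemma \ref{lem:besselestimate} plus the monotonicity claim $\zeta_{\ell,m}(x)^{1/2} \leq \sqrt{1-x^2}$ for the near-pole bound. Two points deserve correction or comparison.

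\emph{First bound: the Jacobian heuristic is wrong.} You assert that the Boyd--Dunster change of variables $x \mapsto w = (\ell+1/2)\zeta_{\ell,m}(x)^{1/2}$ has derivative of order $\ell+1/2$ at the turning point, so that the Airy-scale $m^{1/3}$ in $w$ becomes $m^{1/3}/(\ell+1/2)$ in $x$. In fact $\zeta'_{\ell,m}(a_{\ell,m}) \simeq 1$ but $\zeta_{\ell,m}(a_{\ell,m})^{1/2} = b_{\ell,m}$, so $|dw/dx| \simeq (\ell+1/2)/b_{\ell,m} = (\ell+1/2)^2/m$, and the transition width in $x$ is $m^{4/3}/(\ell+1/2)^2$, not $m^{1/3}/(\ell+1/2)$. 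Your intermediate expression $a_{\ell,m} m^{1/3}/(\ell+1/2)$ is therefore off by a factor $b_{\ell,m}$, and only by accident (or foreknowledge) does your sketch land on the correct exponent $4/3$. The paper sidesteps this computation entirely: it uses the ready-made bound $|z^2-\nu^2|^{1/4} M_\nu(z) \leq C$ from \cite[Appendix B]{BoydDunster} to get $|x^2-a_{\ell,m}^2|^{1/4}|\tSpH_{\ell,m}(x)| \leq C_\epsilon$ in one stroke, and then establishes the uniform bound $|\tSpH_{\ell,m}(x)| \leq C_\epsilon (1+\ell)^{1/2}(1+m)^{-1/3}$ separately, invoking Proposition \ref{prp:stime-classiche}\ref{en:stime_cossin} (not \ref{en:stime_cos}) in the critical range $b_{\ell,m}/2 \leq \sqrt{1-x^2} \leq (1+\epsilon)/2$. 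Your direct Bessel-asymptotics route is viable once the Jacobian is fixed, and has the minor advantage of not importing \cite{BDWZ}; the paper's route is cleaner because the $M_\nu$ bound already encodes the turning-point behaviour.

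\emph{Second bound: the error term needs explicit control.} The expansion \eqref{eq:boyddunster_approx} has a remainder $E_m^{-1} M_m((\ell+1/2)\zeta^{1/2})\,\bigO(\ell^{-1})$ in addition to $J_m$, and Lemma \ref{lem:besselestimate} bounds only $J_m$. The paper handles this by noting (from \cite[\S 3]{BoydDunster} and \cite{MuSp}) that $E_m^{-1} M_m(z) = \sqrt{2}\,J_m(z)$ whenever $z \leq X_m$ with $X_m \geq m$, so that under $\sqrt{1-x^2} \leq b_{\ell,m}/4$ the error term is itself a multiple of $J_m$ and the small-argument bound applies to the whole bracket. You should make this reduction explicit; without it the near-pole argument is incomplete.
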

\begin{proof}
Similarly as in the proof of Proposition \ref{prp:est_olver}, we restrict to $x \in [0,1)$ and $m \geq 0$ and prove the inequalities \eqref{eq:est_boyddunster} with $\tSpH_{\ell,-m}$ in place of $\tSpH_{\ell,m}$.

According to \cite[eqs.\ (4.9), (3.6) and eq.\ below (3.11), applied with $n=0$]{BoydDunster}, the following asymptotic approximation holds:
\begin{equation}\label{eq:boyddunster_approx}
\begin{split}
\LegF_{\ell}^{-m}(x) &= \varkappa_{\ell,m} \left(\frac{\zeta_{\ell,m}(x)-b_{\ell,m}^2}{a_{\ell,m}^2-x^2}\right)^{1/4} \\
&\times \left[J_{m}((\ell+1/2) \,\zeta_{\ell,m}(x)^{1/2}) + E_m^{-1} M_m((\ell+1/2) \,\zeta_{\ell,m}(x)^{1/2}) \, \bigO(\ell^{-1})\right]
\end{split}
\end{equation}
as $\ell \to \infty$, uniformly in $x \in [0,1]$ and $(\ell,m) \in \IS$ with $0 \leq m \leq \epsilon(\ell+1/2)$, where the number $\varkappa_{\ell,m}$ is given by
\[
\varkappa_{\ell,m} = e^m \frac{(\ell+1/2-m)^{(\ell+1/2-m)/2}}{(\ell+1/2+m)^{(\ell+1/2+m)/2}},
\]
(see \cite[eq.\ (4.11)]{BoydDunster}), $E_m^{-1} M_m$ is the pointwise ratio of the auxiliary functions $M_m$ and $E_m$ defined in \cite[\S 3]{BoydDunster} (see also \cite[\S 12.1.3]{Olver-libro}), and $\zeta_{\ell,m} : [0,1] \to [0,\zeta_{\ell,m}(0)]$ is the decreasing bijection satisfying $\zeta_{\ell,m}(a_{\ell,m}) = b_{\ell,m}^2$ and implicitly defined by
\begin{align}
\int_{b_{\ell,m}^2}^{\zeta_{\ell,m}(x)} \frac{(\xi-b_{\ell,m}^2)^{1/2}}{2\xi} \,d\xi &= \int_x^{a_{\ell,m}} \frac{(a_{\ell,m}^2-s^2)^{1/2}}{1-s^2} \,ds \qquad\text{($0 \leq x \leq a_{\ell,m}$),} \label{eq:def_BDzeta_upper}\\
\int_{\zeta_{\ell,m}(x)}^{b_{\ell,m}^2} \frac{(b_{\ell,m}^2-\xi)^{1/2}}{2\xi} \,d\xi &= \int_{a_{\ell,m}}^x \frac{(s^2-a_{\ell,m}^2)^{1/2}}{1-s^2} \,ds \qquad\text{($a_{\ell,m} \leq x \leq 1$).} \label{eq:def_BDzeta_lower}
\end{align}
Note that  in \cite{BoydDunster} the symbols $\zeta$, $\alpha$, $c_{1,1}$ are used instead of our $\zeta_{\ell,m}$, $b_{\ell,m}$, $\varkappa_{\ell,m}$. Note also that, according to \cite[\S 3]{BoydDunster}, the approximation \eqref{eq:boyddunster_approx} holds uniformly provided $b_{\ell,m}$ and $\zeta_{\ell,m}(0)$ range in compact subsets of $[0,1)$ and $[0,\infty)$ respectively; these conditions are clearly satisfied under the assumption $0 \leq m \leq \epsilon(\ell+1/2)$, because $0 \leq b_{\ell,m} \leq \epsilon < 1$ and
moreover, by \eqref{eq:def_BDzeta_upper},
\begin{multline*}
\int_{1}^{\max\{1,\zeta_{\ell,m}(0)\}} \frac{(\xi-1)^{1/2}}{2\xi} \,d\xi \leq \int_{b_{\ell,m}^2}^{\zeta_{\ell,m}(0)} \frac{(\xi-b_{\ell,m}^2)^{1/2}}{2\xi} \,d\xi \\
= \int_0^{a_{\ell,m}} \frac{(a_{\ell,m}^2 -s^2)^{1/2}}{1-s^2} \,ds \leq \int_0^{1} (1 -s^2)^{-1/2} \,ds = \pi/2 = \int_{1}^{\bar\zeta} \frac{(\xi-1)^{1/2}}{2\xi} \,d\xi
\end{multline*}
for some $\bar\zeta \in (0,\infty)$ not depending on $\ell,m$, so $\zeta_{\ell,m}(0) \in [0,\bar\zeta]$.

By \eqref{eq:tildeY} we can rewrite \eqref{eq:boyddunster_approx} in terms of spherical harmonics as follows:
\begin{equation}\label{eq:boyddunster_Yapprox}
\begin{split}
|x^2-a_{\ell,m}^2|^{1/4} \, \tSpH_{\ell,-m}(x) &= \tilde \varkappa_{\ell,m} |(\ell+1/2)^2 \zeta_{\ell,m}(x)-m^2|^{1/4} \\
&\times \bigl[J_{m}((\ell+1/2) \,\zeta_{\ell,m}(x)^{1/2}) \\
&+ E_m^{-1} M_m((\ell+1/2) \,\zeta_{\ell,m}(x)^{1/2}) \, \bigO(\ell^{-1}) \bigr]
\end{split}
\end{equation}
uniformly in $x \in [0,1]$ and $(\ell,m) \in \IS$ with $0 \leq m \leq \epsilon(\ell+1/2)$,
where
\[
\tilde \varkappa_{\ell,m} = \sqrt{\frac{1}{2\pi} \,\frac{(\ell+m)!}{(\ell-m)!}} e^m \frac{(\ell+1/2-m)^{(\ell+1/2-m)/2}}{(\ell+1/2+m)^{(\ell+1/2+m)/2}} \simeq 1
\]
uniformly in $(\ell,m) \in \IS$ by Stirling's approximation.

From \cite[\S 12.1.3]{Olver-libro} it is clear that
\[
|J_\nu| \leq E^{-1}_\nu M_\nu \leq M_\nu
\]
pointwise for all $\nu \in [0,\infty)$. Moreover, by \cite[Appendix B, Lemma 2]{BoydDunster}, the quantity
\[
|z^2-\nu^2|^{1/4} M_\nu(z)
\]
is uniformly bounded for $z,\nu \in [0,\infty)$. Hence, by taking $\nu=m$ and $z=(\ell+1/2) \,\zeta_{\ell,m}(x)^{1/2}$, from \eqref{eq:boyddunster_Yapprox} we deduce that the bound
\begin{equation}\label{eq:boyddunster_bound_Y}
|x^2-a_{\ell,m}^2|^{1/4} \, |\tSpH_{\ell,-m}(x)| \leq C_\epsilon
\end{equation}
holds uniformly in $x \in [0,1]$ and $(\ell,m) \in \IS$ with $0 \leq m \leq \epsilon(\ell+1)$.

In order to complete the proof of the first inequality in \eqref{eq:est_boyddunster}, it is enough to show that
\begin{equation}\label{eq:est_boyddunster_unif}
|\tSpH_{\ell,-m}(x)| \leq C_\epsilon (1+\ell)^{1/2} (1+m)^{-1/3}
\end{equation}
for all $(\ell,m) \in \IS$ with $0 \leq m \leq \epsilon(\ell+1/2)$ and $x \in [0,1]$.
Note that this estimate is certainly true for $m=0$ by Proposition \ref{prp:stime-classiche}\ref{en:stime_unif}, hence we may assume $m>0$.

Let $y=(1-x^2)^{1/2}$ and note that \eqref{eq:boyddunster_bound_Y} can be rewritten as
\begin{equation}\label{eq:boyddunster_bound_Y_bis}
|\tSpH_{\ell,-m}(x)| \leq C_\epsilon \, |y^2-b_{\ell,m}^2|^{-1/4}.
\end{equation}
If $y \geq (1+\epsilon)/2$, then \eqref{eq:boyddunster_bound_Y_bis} implies that
\[
|\tSpH_{\ell,-m}(x)| \leq C_\epsilon \leq C_\epsilon (1+\ell)^{1/2} (1+m)^{-1/3}.
\]
Similarly, if $y \leq b_{\ell,m}/2$, then \eqref{eq:boyddunster_bound_Y_bis} implies that
\[
|\tSpH_{\ell,-m}(x)| \leq C_\epsilon b_{\ell,m}^{-1/2} = C_\epsilon \left(\frac{\ell+1/2}{m}\right)^{1/2} \leq C_\epsilon (1+\ell)^{1/2} (1+m)^{-1/3}.
\]
Finally, if $b_{\ell,m}/2 \leq y \leq (1+\epsilon)/2$, then, by Proposition \ref{prp:stime-classiche}\ref{en:stime_cossin},
\[
|\tSpH_{\ell,-m}(x)| \leq C_\epsilon \, b_{\ell,m}^{-1/3} (1+\ell)^{1/6} = C_\epsilon (1+\ell)^{1/2} m^{-1/3}
\]
and \eqref{eq:est_boyddunster_unif} follows.

We now turn to the second inequality in \eqref{eq:est_boyddunster}. For this we need a better control of the ``error term'' $E^{-1}_m M_m$ in \eqref{eq:boyddunster_Yapprox}. According to \cite[\S 3]{BoydDunster},
\[
E^{-1}_m M_m(z) = \sqrt{2} J_m(z)
\]
for all $z \in [0,X_m]$, where $X_m$ is a positive real number defined in \cite[eq.\ (3.4)]{BoydDunster} and satisfying $X_m \geq m$ by \cite[Corollary 1 applied with $\theta = 3\pi/4$]{MuSp}. Hence \eqref{eq:boyddunster_Yapprox} yields that
\begin{multline}\label{eq:boyddunster_bound_Y_better}
|x^2-a_{\ell,m}^2|^{1/4} \, |\tSpH_{\ell,-m}(x)| \\
\leq C_\epsilon |(\ell+1/2)^2 \zeta_{\ell,m}(x)-m^2|^{1/4}
|J_{m}((\ell+1/2) \,\zeta_{\ell,m}(x)^{1/2}) |
\end{multline}
uniformly for all $(\ell,m) \in \IS$ with $0 \leq m \leq \epsilon (\ell+1/2)$ and $x \in [0,1]$ satisfying $(\ell+1/2) \, \zeta_{\ell,m}(x)^{1/2} \leq m$, that is, $\zeta_{\ell,m}(x)^{1/2} \leq b_{\ell,m}$.

We now claim that
\begin{equation}\label{eq:boyddunster_claim}
\zeta_{\ell,m}(x)^{1/2} \leq \sqrt{1-x^2}
\end{equation}
for all $x \in [a_{\ell,m},1]$. Assuming the claim, from \eqref{eq:boyddunster_bound_Y_better} and Lemma \ref{lem:besselestimate}, we obtain that, for any given $\delta \in (0,1)$, for all $(\ell,m) \in \IS$ and $x \in [0,1]$ satisfying $0 \leq m \leq \epsilon (\ell+1/2)$ and $\sqrt{1-x^2} \leq \delta b_{\ell,m}$,
\begin{equation}
\begin{split}
|\tSpH_{\ell,-m}(x)| &\leq C_{\epsilon,\delta} \,b_{\ell,m}^{-1/2} m^{1/2} \, |J_{m}((\ell+1/2) \,\zeta_{\ell,m}(x)^{1/2}) | \\
&\leq C_{\epsilon,\delta} \, b_{\ell,m}^{-1/2} m^{1/2} ((\ell+1/2) \,\zeta_{\ell,m}(x)^{1/2}/2)^m / m! \\
&\leq C_{\epsilon,\delta} \, b_{\ell,m}^{-1/2} m^{1/2} \frac{(\ell+1/2)^{m} (1-x^2)^{m/2}}{2^m \, m!} \\
&\leq C_{\epsilon,\delta} \, b_{\ell,m}^{-1/2} m^{1/2} \frac{(\delta m)^m}{2^m \, m!} \\
&\leq C_{\epsilon,\delta} \, b_{\ell,m}^{-1/2} (\delta e/2)^m\,,
\end{split}
\end{equation}
where Stirling's approximation was used in the last step. The second inequality in \eqref{eq:est_boyddunster} follows by choosing $\delta = 1/4$.

We are left with the proof of \eqref{eq:boyddunster_claim}.
Note that the change of variable $\xi = t^2$ gives
\[
\int_{\zeta_{\ell,m}(x)}^{b_{\ell,m}^2} \frac{(b_{\ell,m}^2-\xi)^{1/2}}{2\xi} \,d\xi
 = \int^{b_{\ell,m}}_{\zeta_{\ell,m}(x)^{1/2}} \frac{(b_{\ell,m}^2-t^2)^{1/2}}{t} \,dt,
\]
while the change of variable $s = \sqrt{1-t^2}$ gives
\[
\int_{a_{\ell,m}}^x \frac{(s^2-a_{\ell,m}^2)^{1/2}}{1-s^2} \,ds = \int^{b_{\ell,m}}_{\sqrt{1-x^2}} \frac{(b_{\ell,m}^2-t^2)^{1/2}}{t \sqrt{1-t^2}}  \,dt,
\]
hence, by \eqref{eq:def_BDzeta_lower},
\[
\int^{b_{\ell,m}}_{\zeta_{\ell,m}(x)^{1/2}} \frac{(b_{\ell,m}^2-t^2)^{1/2}}{t} \,dt
= \int^{b_{\ell,m}}_{\sqrt{1-x^2}} \frac{(b_{\ell,m}^2-t^2)^{1/2}}{t \sqrt{1-t^2}}  \,dt
\geq \int^{b_{\ell,m}}_{\sqrt{1-x^2}} \frac{(b_{\ell,m}^2-t^2)^{1/2}}{t}  \,dt,
\]
which implies \eqref{eq:boyddunster_claim}.
\end{proof}

By combining Propositions \ref{prp:est_olver} and \ref{prp:est_boyddunster} we obtain in particular the following estimate.

\begin{corollary}\label{cor:est_combinata}
For all $(\ell,m) \in \IS$ and $x \in [-1,1]$,
\[
|\tSpH_{\ell,m}(x)| \leq C \left( (1+|m|) (1+\ell)^{-2} + |x^2 -a_{\ell,m}^2| \right)^{-1/4}.
\]
\end{corollary}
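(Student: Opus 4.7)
The plan is straightforward: combine the two uniform bounds from Propositions \ref{prp:est_olver} and \ref{prp:est_boyddunster}, after verifying that in each of the two regimes the expression $(1+|m|)(1+\ell)^{-2} + |x^2 - a_{\ell,m}^2|$ appearing in the claim is dominated by the corresponding quantity in those propositions. Fix any $\epsilon \in (0,1)$; the specific value is immaterial, as it only affects the implicit constant.

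In the Hermite regime $|m| \geq \epsilon(\ell+1/2)$, Proposition \ref{prp:est_olver} yields $|\tSpH_{\ell,m}(x)| \leq C_\epsilon((1+\ell)^{-1} + |x^2 - a_{\ell,m}^2|)^{-1/4}$. Since $|m| \leq \ell$ for every $(\ell,m) \in \IS$, one has $(1+|m|)(1+\ell)^{-2} \leq (1+\ell)^{-1}$, hence
\[
(1+|m|)(1+\ell)^{-2} + |x^2 - a_{\ell,m}^2| \leq (1+\ell)^{-1} + |x^2 - a_{\ell,m}^2|,
\]
and raising both sides to the power $-1/4$ reverses the inequality, giving the claim in this regime. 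As a sanity check, observe that here $|m| \simeq \ell$, so the two expressions are in fact comparable up to constants depending on $\epsilon$, and no essential information is lost.

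In the Bessel regime $|m| \leq \epsilon(\ell+1/2)$, Proposition \ref{prp:est_boyddunster} supplies the stronger estimate
\[
|\tSpH_{\ell,m}(x)| \leq C_\epsilon \left( (1+|m|)^{4/3}(1+\ell)^{-2} + |x^2 - a_{\ell,m}^2| \right)^{-1/4}.
\]
Since $(1+|m|)^{4/3} \geq 1+|m|$, the same monotonicity argument applies verbatim and yields the claim at once.

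There is no substantive obstacle: the corollary is a purely algebraic consequence of the two preceding pointwise bounds, repackaged into a single uniform expression that treats both regimes on equal footing. The resulting estimate is in fact strictly weaker than the Boyd--Dunster bound in the Bessel regime (the exponent $4/3$ is degraded to $1$), but this loss is acceptable and the combined form is more convenient for the weighted Plancherel-type analysis to follow, where a single closed-form expression depending symmetrically on $\ell$ and $m$ is preferable.
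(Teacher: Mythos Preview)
Your proof is correct and follows exactly the approach the paper indicates: it simply combines the first inequalities of Propositions~\ref{prp:est_olver} and~\ref{prp:est_boyddunster}, using the elementary observations $(1+|m|)(1+\ell)^{-2}\le(1+\ell)^{-1}$ and $(1+|m|)\le(1+|m|)^{4/3}$ to pass to the common weaker form. The paper itself gives no further detail than ``by combining'' the two propositions, and your argument is precisely the intended fill-in.
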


\section{A weighted Plancherel-type estimate}\label{s:weighted}

As a consequence of \eqref{eq:base}
for any bounded Borel function $G:\RR^2\to \CC$ we have
\begin{equation}\label{eq:mult}
G(\opL,T) f(z)=\sum_{(\ell,m) \in \IS} G(\lambda_{\ell,m},m)\,\langle f, \SpH_{\ell,m}\rangle \SpH_{\ell,m} (z),
\end{equation}
for all $f \in L^2(\sfera)$ and almost all $z\in\sfera$.
The integral kernel $\Kern_{G(\opL,T)}$ of the operator $G(\opL,T) $  is  then given by
\[
\Kern_{G(\opL,T)} (z,z')=  \sum_{(\ell,m)\in\IS} G(\lambda_{\ell,m},m)\, \SpH_{\ell,m}  (z) \overline{\SpH_{\ell,m} (z')},
\]
and satisfies
\begin{equation}\label{eq:nucleoquadrato}
\|\Kern_{G(\opL,T)} (\cdot,\spnt{\theta'}{\varphi'})\|_{L^2(\sfera)}^2=  \sum_{(\ell,m)\in\IS} \left|G(\lambda_{\ell,m},m)\right|^2 \left| \tSpH_{\ell,m}  (\sin\theta')\right|^2.
\end{equation}

We are going to prove a weighted Plancherel-type estimate for $\opL$.
To this purpose, the following elementary lemma will be of use: it gives a sufficient condition for a sum to be estimated by a corresponding integral (cf., e.g., \cite[proof of Lemma 3.4]{CS}).

\begin{lemma}\label{lem:sumintegral}
Let $\kappa \in [1,\infty)$. Let $D \subseteq \RR$ be open and $\phi : D \to \RR$ be a nonnegative differentiable function satisfying
\[
|\phi'(x)| \leq \kappa \phi(x)
\]
for all $x \in D$. Let $R \subseteq \RR$ be such that
\[
\inf \{ |x-x'| \tc x,x'\in R, \, x\neq x'\} \geq \kappa^{-1}.
\]
Then, for all intervals $I \subseteq D$ with length $|I| \geq \kappa^{-1}$,
\[
\sum_{x \in R \cap I} \phi(x) \leq C_\kappa \int_I \phi(x) \,dx,
\]
where the constant $C_\kappa$ depends only on $\kappa$ and not on $I$, $R$, $\phi$.
\end{lemma}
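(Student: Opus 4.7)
The key observation is that the hypothesis $|\phi'(x)| \leq \kappa \, \phi(x)$ is a bound on the logarithmic derivative of $\phi$: wherever $\phi > 0$, one has $|(\log \phi)'(x)| \leq \kappa$, which forces $\phi$ to be essentially constant on intervals of length $O(\kappa^{-1})$. The plan is to exploit this to replace each value $\phi(x_i)$, at points $x_i \in R \cap I$, by a local average of $\phi$ over a sub-interval of $I$, and then to use the separation of the $x_i$'s to ensure these sub-intervals are pairwise disjoint, so that their contributions add up to at most $\int_I \phi$.

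Concretely, set $r = (2\kappa)^{-1}$ and, for each $x_i \in R \cap I$, define
\[
J_i = \{ x \in I \tc |x - x_i| \leq r/2 \}.
\]
Since $x_i \in I$ and $|I| \geq \kappa^{-1} \geq r$, an elementary case analysis on whether $x_i$ is close to the endpoints of $I$ shows that $|J_i| \geq r/2 = (4\kappa)^{-1}$, while clearly $J_i \subseteq I$. Moreover, by the separation hypothesis on $R$, any two distinct centres $x_i, x_j$ satisfy $|x_i - x_j| \geq \kappa^{-1} = 2r$, which exceeds the sum of the radii of $J_i$ and $J_j$; hence the $J_i$ are pairwise disjoint.

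Next I would argue that $\phi$ is comparable to $\phi(x_i)$ throughout $J_i$. Restrict attention to those $x_i$ with $\phi(x_i) > 0$ (the others contribute nothing). Since $|(\log \phi)'| \leq \kappa$ at points where $\phi > 0$, the bound $|x - x_i| \leq r/2 = (4\kappa)^{-1}$ for $x \in J_i$, combined with the integral form of Gronwall's inequality applied to $\log \phi$, yields $\phi(x) \geq e^{-1/4} \phi(x_i)$ throughout $J_i$ (and in particular $\phi$ does not vanish on $J_i$, so the integration is legitimate). Integrating over $J_i$ gives
\[
\int_{J_i} \phi(x) \, dx \geq e^{-1/4} \, \phi(x_i) \, |J_i| \geq \frac{e^{-1/4}}{4\kappa} \phi(x_i).
\]

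Summing over $i$ and using disjointness of the $J_i$ together with $J_i \subseteq I$ for each $i$, I obtain
\[
\sum_{x_i \in R \cap I} \phi(x_i) \leq 4 \kappa e^{1/4} \sum_{i} \int_{J_i} \phi(x) \, dx \leq 4 \kappa e^{1/4} \int_I \phi(x) \, dx,
\]
which is the claimed inequality with $C_\kappa = 4 \kappa e^{1/4}$. The only point requiring some care is the construction of the $J_i$ near the endpoints of $I$, where the naive symmetric interval around $x_i$ might leave $I$; this is handled by taking the intersection with $I$ and verifying that the length loss is at most a factor of $2$. The differential inequality ensures $\phi$ stays strictly positive on each $J_i$ whose centre has $\phi(x_i) > 0$, so no delicacy arises from possible zeros of $\phi$.
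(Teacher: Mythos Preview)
Your proof is correct and follows essentially the same approach as the paper: both arguments exploit the logarithmic-derivative bound to obtain local comparability of $\phi$ on intervals of length $\sim \kappa^{-1}$, assign to each point of $R \cap I$ a subinterval of $I$, use the separation hypothesis to make these subintervals disjoint, and sum the resulting local integrals. The only cosmetic difference is that the paper uses one-sided intervals $[x,x+\kappa^{-1})$ and treats the maximal point of $R\cap I$ separately (arriving at $C_\kappa = 2e\kappa$), whereas your symmetric construction $J_i = I \cap [x_i - r/2, x_i + r/2]$ handles all points uniformly and yields $C_\kappa = 4e^{1/4}\kappa$.
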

\begin{proof}
Without loss of generality we may assume that $R$ is finite, contained in $I$ and nonempty. Let $x_0$ be the maximum of $R$.

The above differential inequality for $\phi$ implies that
\[
\phi(x') \leq e^{\kappa |x'-x|} \phi(x)
\]
whenever $x,x' \in I$. For all $x \in R \setminus \{x_0\}$, the interval $[x,x+\kappa^{-1})$ is contained in $I$ (indeed $[x,x+\kappa^{-1}) \subseteq [x,x_0] \subseteq I$, since $R$ is $\kappa^{-1}$-separated and $I$ is an interval) and moreover the intervals $[x,x+\kappa^{-1})$ with $x$ ranging in $R \setminus \{x_0\}$ are pairwise disjoint (again because $R$ is $\kappa^{-1}$-separated). Hence
\[
\sum_{x \in R \setminus\{x_0\}} \phi(x) \leq \sum_{x \in R \setminus\{x_0\}} \kappa \int_{[x,x+\kappa^{-1})} e^{\kappa|x-x'|} \phi(x') \,dx' \leq e \kappa \int_I \phi(x) \,dx.
\]
Similarly, since $|I|\geq \kappa^{-1}$, there exists an interval $J \subseteq I$ containing $x_0$ with length $|J|=\kappa^{-1}$ and therefore
\[
\phi(x_0) \leq |J|^{-1} \int_J e^{\kappa |x_0-x|} \phi(x) \,dx \leq e\kappa \int_I \phi(x) \,dx.
\]
Hence the conclusion follows with $C_\kappa = 2e\kappa$.
\end{proof}

Set $[\ell] = \ell+1/2$ for all $\ell \in \NN$.

\begin{proposition}\label{prp:indici_alti}
Let $\epsilon \in (0,1)$. For all $i \in \NN \setminus \{0\}$ and $\alpha \in [0,1/2)$,
\begin{equation}\label{eq:indici_alti_orig}
\sup_{x \in [-1,1]}
\frac{1}{i} \max\left\{ \frac{1}{i}, |x|  \right\}^{1-2\alpha} \sum_{\substack{(\ell,m)\in\IS \\ |m| \geq \epsilon [\ell] \\ \lambda_{\ell,m} \in [i^2,(i+1)^2]}} \lambda_{\ell,m}^{\alpha} \, |m|^{-2\alpha} \left| \tSpH_{\ell,m}(x) \right|^2
\leq
C_{\epsilon,\alpha}.
\end{equation}
\end{proposition}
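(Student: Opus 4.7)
The key idea is to parametrize the sum by $u := [\ell] - m$ rather than by $m$. This correctly captures the sparsity of admissible lattice points: in the Hermite regime most large $m$ have no matching $\ell$ with $\lambda_{\ell,m} \in [i^2,(i+1)^2]$, so a naive reindexing by $m$ massively over-counts.

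By \eqref{eq:Yindexparity} and the evenness in $x$ of Olver's bounds, I restrict to $m \geq 0$ and $x \in [0,1]$. Setting $u = [\ell] - m$ and $v = [\ell] + m$ (half-integers with $u \leq v$), the constraints become $uv = \lambda_{\ell,m} + 1/4 \in [i^2+1/4, (i+1)^2+1/4]$ and $u/v \leq \theta_\epsilon := (1-\epsilon)/(1+\epsilon)$. One has $a_{\ell,m} = 2\sqrt{uv}/(u+v)$, $[\ell] = (u+v)/2$, and $m = (v-u)/2$, so $\sqrt{uv} \simeq i$, and (typically $u \ll v$) $v \simeq i^2/u$, $a_{\ell,m} \simeq 2u/i$, $[\ell]^{-1} \simeq 2u/i^2$, $\lambda_{\ell,m}^\alpha m^{-2\alpha} \simeq (2u/i)^{2\alpha}$. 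For each half-integer $u \in [1/2, \sqrt{\theta_\epsilon}\, i]$ there are $\asymp i/u + 1$ admissible $v$'s. Writing $s = 2u/i$ (so $s$ is on a lattice of spacing $2/i$ in $[1/i, 2\sqrt{\theta_\epsilon}]$), the sum to be estimated becomes, up to $\asymp$,
\[
S(x) \asymp \sum_{s} \Bigl(\frac{2}{s}+1\Bigr) s^{2\alpha}\, |\tSpH_{\ell(s),m(s)}(x)|^2.
\]

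I next invoke Proposition \ref{prp:est_olver}. For $s \leq x/K$ (equivalently $x \geq K a_{\ell,m}$) use the exponential bound $|\tSpH_{\ell,m}(x)|^2 \lesssim x^{-1} e^{-2c\ell x^2}$ with $\ell \simeq i/s$ giving exponent $\simeq -ix^2/s$; for $s > x/K$ use the polynomial bound $|\tSpH_{\ell,m}(x)|^2 \lesssim (s/i + |x^2 - s^2|)^{-1/2}$. The exponential contribution is controlled by a direct gamma-type estimate which, after multiplication by $w(x) := \max\{1/i, x\}^{1-2\alpha}/i$, decays like $e^{-c'ix}$ and is therefore uniformly bounded. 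For the polynomial part I invoke Lemma \ref{lem:sumintegral} (converting the $s$-sum to an integral via the spacing $2/i$, after verifying the log-derivative hypothesis holds uniformly away from the kink at $s = x$ and isolating a short interval around the kink), obtaining
\[
S_{\text{pol}}(x) \lesssim_\epsilon \frac{i}{2} \int_{\max\{1/i,\, x/K\}}^{2\sqrt{\theta_\epsilon}} \Bigl(\frac{2}{s}+1\Bigr) s^{2\alpha} \bigl(s/i + |x^2-s^2|\bigr)^{-1/2}\, ds.
\]

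Substituting $t = s^2$ and splitting the integration range into $\{t \leq x^2/2\}$ (where $|x^2-t|\simeq x^2$ dominates $\sqrt t/i$ provided $x \geq 1/i$, giving $\lesssim x^{2\alpha-1}/\alpha$ after integrating $t^{\alpha-1}x^{-1}$), the critical region $\{|x^2-t| \leq \sqrt t/i\}$ of length $\simeq x/i$ (contributing $\lesssim i^{-1/2} x^{2\alpha - 3/2}$), and $\{t \geq 2x^2\}$ (where $|x^2-t|\simeq t$ dominates and $\int t^{\alpha-3/2}\,dt$ is controlled by $x^{2\alpha-1}/(1/2-\alpha)$ thanks to $\alpha < 1/2$), elementary computations yield $S_{\text{pol}}(x) \lesssim_{\epsilon,\alpha} i\, x^{2\alpha-1}$ for $x > 1/i$. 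For $x \leq 1/i$ only the range $t \geq 1/i^2 \geq x^2$ appears, and an analogous splitting gives $S_{\text{pol}}(x) \lesssim_{\epsilon,\alpha} i^{2-2\alpha}$. Multiplying by $w(x)$ gives $w(x)S_{\text{pol}}(x) \lesssim_{\epsilon,\alpha} 1$ in both cases, with the implicit constant blowing up like $1/(1-2\alpha)$ as $\alpha \to 1/2$ in accordance with the statement.

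The main obstacle is the identification of the correct parametrization $u = [\ell]-m$: with it the sum decomposes naturally into families (one per $u$) of size $\simeq i/u$, each family having all its members with $a_{\ell,m} \simeq 2u/i$ and so contributing to a single ``band'' of the integrand, which is what makes the integral estimates cleanly match the weight $w(x)$. A secondary technical point is verifying the hypothesis $|\phi'| \leq \kappa \phi$ of Lemma \ref{lem:sumintegral} uniformly, which fails near the critical value $s = x$ (the singularity of $(s/i + |x^2-s^2|)^{-1/2}$); this is resolved by applying the lemma separately on $[\max\{1/i, x/K\}, x - 1/i] \cup [x+1/i, 2\sqrt{\theta_\epsilon}]$ and estimating the short interval $[x - 1/i, x + 1/i]$ directly via the peak value $\simeq (i/x)^{1/2}$ of $|\tSpH|^2$ times the $\asymp 1$ number of lattice points it contains.
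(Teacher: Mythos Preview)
Your reparametrisation $u=[\ell]-m$, $v=[\ell]+m$ is exactly the paper's $q=\ell-m+1/2$, $p=\ell+m+1/2$, so the ``key idea'' you highlight is the same as in the paper. The organisation is different: the paper fixes $q$ and applies Lemma~\ref{lem:sumintegral} to the inner $p$-sum (after the linearising substitution $\bar x=\sqrt{\phi^{-1}(x^2)}$ so that $|x^2-a_{\ell,m}^2|\simeq|\bar x^2-q/p|$), whereas you collapse the inner $v$-sum to a count times a pointwise bound and apply Lemma~\ref{lem:sumintegral} to the outer $u$-sum. Your observation that, for fixed $u$, all admissible $v$ give values of $a_{\ell,m}^2$ within $O([\ell]^{-1})$ of one another is correct (indeed $|\partial_v a_{\ell,m}^2|\lesssim u/v^2$, and the $v$-range has length $\simeq i/u$, so the variation is $\lesssim u^2/i^3\lesssim u/i^2\simeq[\ell]^{-1}$), so collapsing the inner sum is legitimate.

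There is, however, a genuine gap in the step where you write the pointwise bound as $|\tSpH_{\ell,m}(x)|^2\lesssim(s/i+|x^2-s^2|)^{-1/2}$ with $s=2u/i$. Olver's estimate gives $([\ell]^{-1}+|x^2-a_{\ell,m}^2|)^{-1/2}$, and while $a_{\ell,m}\simeq_\epsilon s$, the difference $|a_{\ell,m}^2-s^2|$ is \emph{not} $O([\ell]^{-1})$ in general: for $u$ comparable to $i$ (near the boundary of the Hermite regime) one has $a_{\ell,m}^2=4uv/(u+v)^2$ bounded away from $s^2=4u^2/i^2$ by a quantity of order $1$, so your displayed bound can be smaller than $|\tSpH_{\ell,m}(x)|^2$ by a factor $\sim(i/u)^{1/2}$ when $x$ is near $a_{\ell,m}$ but away from $s$. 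The fix is easy and keeps your structure intact: replace $s^2$ by a genuine representative $A(u)^2$ (e.g.\ $A(u)^2=4u^2i^2/(u^2+i^2)^2$, which lies within $O([\ell]^{-1})$ of every $a_{\ell,m}^2$ with the given $u$), verify $|\partial_u A(u)^2|\simeq u/i^2$ so that the log-derivative hypothesis of Lemma~\ref{lem:sumintegral} holds with $\kappa\simeq 1$, and only \emph{after} passing to the integral perform the change of variable $s\mapsto A$ (Jacobian $\simeq 1$ since $A\simeq_\epsilon s$) to reach your displayed integral. With that correction your argument goes through and is a legitimate alternative to the paper's route; it is somewhat more streamlined at the cost of needing the extra observation on the $v$-variation of $a_{\ell,m}^2$.
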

\begin{proof}
Note that, by \eqref{eq:Yindexparity}, it is enough to prove the above estimate with the sum restricted to $m \geq 0$. Since $|m| \simeq [\ell]$ and $\lambda_{\ell,m} \simeq i^2$ in the summation range, we are reduced to proving the following estimate: for all $x \in [-1,1]$,
\begin{equation}\label{eq:indici_alti_mod}
\sum_{\substack{(\ell,m)\in\IS \\ |m| \geq \epsilon [\ell] \\ \lambda_{\ell,m} +1/4 \in [i^2,(i+1)^2]}} i^{2\alpha} \, [\ell]^{-2\alpha} \left| \tSpH_{\ell,m}(x) \right|^2 \leq C_{\epsilon,\alpha} \, i \max\left\{ \frac{1}{i}, |x|  \right\}^{2\alpha-1} .
\end{equation}
Indeed, $\lambda_{\ell,m} \in [i^2,(i+1)^2]$ implies $\lambda_{\ell,m} +1/4 \in [i^2,(i+2)^2]$, so \eqref{eq:indici_alti_orig} follows by combining two instances of \eqref{eq:indici_alti_mod} (corresponding to $i$ and $i+1$ respectively).

It is convenient to reindex the sum by setting $p = \ell+m+1/2$ and $q = \ell-m+1/2$, so
\[
\lambda_{\ell,m} + 1/4 = p q , \qquad a_{\ell,m}^2 = \frac{4pq}{(p+q)^2};
\]
moreover the range $(\ell,m) \in \IS$ corresponds to $(p,q) \in (\NN+1/2)^2$, the condition $m \geq 0$ corresponds to $p \geq q$, and $m \geq \epsilon [\ell]$ corresponds to $\bar \epsilon p \geq q$, where $\bar \epsilon = (1-\epsilon)/(1+\epsilon) \in (0,1)$.

Let us first discuss the range $|x| \leq a_{\ell,m}/2$.
In this range, by Proposition \ref{prp:est_olver},
\[
|\tSpH_{\ell,m}(x)|^2 \lesssim |x^2-a_{\ell,m}^2|^{-1/2} \lesssim a_{\ell,m}^{-1} \simeq p/i \simeq i/q
\]
and moreover $|x| \lesssim a_{\ell,m} \simeq q/i$, i.e., $i \gtrsim q \gtrsim i|x|$ (since $a_{\ell,m} \leq 1$).
Hence
\[\begin{split}
\sum_{\substack{(\ell,m)\in\IS \\ |m| \geq \epsilon [\ell] \\ \lambda_{\ell,m} +1/4 \in [i^2,(i+1)^2] \\ |x| \leq a_{\ell,m}/2}} i^{2\alpha} \, [\ell]^{-2\alpha} \left| \tSpH_{\ell,m}(x) \right|^2
&\lesssim i^{2\alpha} \sum_{\substack{q\in\NN+1/2 \\ i \gtrsim q \gtrsim i |x|}} \sum_{\substack{p\in\NN+1/2\\ p \in [i^2/q,(i+1)^2/q]}} \frac{i}{q} \, p^{-2\alpha} \\
&\lesssim i^{2\alpha} \sum_{\substack{q\in\NN+1/2 \\ q \gtrsim i |x|}} \frac{i^2}{q^2} \, i^{-4\alpha} q^{2\alpha} \\
&\lesssim i^{2-2\alpha} \max\{i|x|,1\}^{2\alpha-1} \\
&= i \max\{|x|,i^{-1}\}^{2\alpha-1},
\end{split}\]
where we used the fact the interval $[i^2/q,(i+1)^2/q]$ has length $(2i+1)/q \simeq i/q \gtrsim 1$.

Let us now consider the range $|x| \geq K a_{\ell,m}$, where $K$ is the constant given by Proposition \ref{prp:est_olver}. In this range
\[
|\tSpH_{\ell,m}(x)|^2 \lesssim |x|^{-1} \exp(-c p |x|^2) \lesssim p^{1/2} (p |x|^2)^{-N}
\]
for some $c \in (0,1)$, where $N$ is an arbitrarily large exponent (to be fixed later). Moreover $|x| \gtrsim q/i$, i.e., $q \lesssim i|x|$, and in particular $i|x| \gtrsim 1$. Hence
\[\begin{split}
\sum_{\substack{(\ell,m)\in\IS \\ |m| \geq \epsilon [\ell] \\ \lambda_{\ell,m} +1/4 \in [i^2,(i+1)^2] \\ |x| \geq K a_{\ell,m}}} &i^{2\alpha} \, [\ell]^{-2\alpha} \left| \tSpH_{\ell,m}(x) \right|^2 \\
&\lesssim i^{2\alpha} \sum_{\substack{q\in\NN+1/2 \\ q\lesssim i |x|}} \sum_{\substack{p\in\NN+1/2\\ p \in [i^2/q,(i+1)^2/q]}} p^{1/2-2\alpha-N} |x|^{-2N} \\
&\lesssim i^{2\alpha} |x|^{-2N} \sum_{\substack{q\in\NN+1/2 \\ q\lesssim i|x|}} \frac{i}{q} \left(\frac{i^2}{q}\right)^{1/2-2\alpha-N} \\
&\lesssim i^{2-2\alpha-2N} |x|^{-2N} (i|x|)^{N+2\alpha-1/2}  \\
&= (i|x|)^{1/2-N} \, i |x|^{2\alpha-1} \lesssim i |x|^{2\alpha-1} \lesssim i \max\{|x|,i^{-1}\}^{2\alpha-1},
\end{split}\]
as long as we choose $N > 1/2$. Again, the fact that the interval $[i^2/q,(i+1)^2/q]$ has length $\simeq i/q \gtrsim 1$ was used.

We are left with the range $a_{\ell,m}/2 \leq |x| \leq K a_{\ell,m}$. In this range,
\[
|x| \simeq a_{\ell,m} \simeq i/p \gtrsim 1/i.
\]
Note that $a_{\ell,m}^2 = \phi(q/p)$, where $\phi(w) = 4w/(1+w)^2$. Note moreover that $\phi : [0,1] \to [0,1]$ is an increasing bijection, satisfying $w \leq \phi(w) \leq 4w$; its derivative is given by
$\phi'(w) = 4 \frac{1-w}{(1+w)^3}$
and vanishes only at $w=1$.
Hence, if we set $\bar x = \sqrt{\phi^{-1}(x^2)}$, then
\[
\bar x \simeq |x|
 \qquad\text{and}\qquad
|x^2 - a_{\ell,m}^2| \simeq |\bar x^2 - q/p|
\]
uniformly for $x \in [0,1]$ and $p,q \in \NN+1/2$ with $q \leq \bar\epsilon p$. Thus, by Proposition \ref{prp:est_olver},
\[
|\tSpH_{\ell,m}(x)|^2 \lesssim (p^{-1} + |\bar x^2-q/p|)^{-1/2} =: \Phi(\bar x,p,q).
\]
The above considerations give that
\[
\sum_{\substack{(\ell,m)\in\IS \\ |m| \geq \epsilon [\ell] \\ \lambda_{\ell,m} +1/4 \in [i^2,(i+1)^2] \\ a_{\ell,m}/2 \leq |x| \leq K a_{\ell,m}}} i^{2\alpha} \, [\ell]^{-2\alpha} \left| \tSpH_{\ell,m}(x) \right|^2
\lesssim |x|^{2\alpha} \sum_{\substack{q\in\NN+1/2 \\ q \simeq i|x|}} \sum_{\substack{p\in\NN+1/2\\ p \in [i^2/q,(i+1)^2/q]}} \Phi(\bar x,p,q) .
\]

We now split the sum in $q$ into three parts. Let us consider first the part where $q < i\bar x$.
Note that
\[
\frac{\partial \Phi}{\partial p}(\bar x,p,q) = \frac{1}{2} \Phi(\bar x,p,q) \frac{p^{-2} + qp^{-2} \sgn (q/p - \bar x^2)}{p^{-1} + |\bar x^2 - q/p|},
\]
so
\begin{equation}\label{eq:PhiLipschitz}
\left|\frac{\partial \Phi}{\partial p}(\bar x,p,q)\right| \lesssim \Phi(\bar x,p,q)
\end{equation}
whenever $p,q \geq 1/2$ and $q \lesssim p$.
Note moreover that the interval $[i^2/q,(i+1)^2/q]$ has length $(2i+1)/q \gtrsim 1$ whenever $q \lesssim i$. By Lemma \ref{lem:sumintegral},
we can then estimate the sum in $p$ by the corresponding integral:
\[\begin{split}
& |x|^{2\alpha} \sum_{\substack{q\in\NN+1/2 \\ q \simeq i|x|\\q < i\bar x}} \sum_{\substack{p\in\NN+1/2\\ p \in [i^2/q,(i+1)^2/q]}} \Phi(\bar x,p,q) \\
&\lesssim |x|^{2\alpha} \sum_{\substack{q\in\NN+1/2 \\ q \simeq i|x|\\q < i\bar x}} \int_{p \in [i^2/q,(i+1)^2/q]} \Phi(\bar x,p,q) \,dp \\
&\leq |x|^{2\alpha} \sum_{\substack{q\in\NN+1/2 \\ q \simeq i|x|\\q < i\bar x}} \int_{p \in [i^2/q,(i+1)^2/q]} |\bar x^2 - q/p|^{-1/2} \,dp \\
&\lesssim i|x|^{2\alpha-2} \sum_{\substack{q\in\NN+1/2 \\ q \simeq i|x|\\q < i\bar x}} \int_{v \in [\bar x^2 i^2/q^2,\bar x^2 (i+1)^2/q^2]} |v-1|^{-1/2} \,dv ,
\end{split}\]
where the change of variable $v=\bar x^2 p/q$ was made in the last step, and the fact that $v \simeq \bar x^2 i^2/q^2 \simeq 1$ was used. Thanks to the condition $q < i\bar x$, we have $v > 1$ in the domain of integration, so
\[\begin{split}
\int_{v \in [\bar x^2 i^2/q^2,\bar x^2 (i+1)^2/q^2]} |v-1|^{-1/2} \,dv &= 2[ (\bar x^2 (i+1)^2/q^2-1)^{1/2} - (\bar x^2 i^2/q^2-1)^{1/2} ] \\
&\lesssim  i^{-1} (1-q^2/(\bar x^2 i^2))^{-1/2},
\end{split}\]
where the fact that $q \simeq i\bar x$ was used. Hence
\[\begin{split}
& |x|^{2\alpha} \sum_{\substack{q\in\NN+1/2 \\ q \simeq i|x|\\q < i\bar x}} \sum_{\substack{p\in\NN+1/2\\ p \in [i^2/q,(i+1)^2/q]}} \Phi(\bar x,p,q) \\
&\lesssim |x|^{2\alpha-2} \sum_{\substack{q\in\NN+1/2 \\ q \simeq i|x|\\q < i\bar x}} (1-q^2/(\bar x^2 i^2))^{-1/2}\\
&\lesssim i|x|^{2\alpha-1} \int_{\substack{q \simeq i|x|\\q < i\bar x}} (1-q^2/(\bar x^2 i^2))^{-1/2} \frac{dq}{i \bar x}\\
&\lesssim i|x|^{2\alpha-1} \lesssim i \max\{i^{-1},|x|\}^{2\alpha-1},
\end{split}\]
since $|x| \gtrsim 1/i$ in this range.

Similarly one can control the part of the sum where $q > (i+1)\bar x$. Finally, the part of the sum where $i \bar x \leq q \leq (i+1)\bar x$ contains at most two summands and $\Phi(\bar x,p,q) \leq p^{1/2}$, so
\[\begin{split}
|x|^{2\alpha} \sum_{\substack{q\in\NN+1/2 \\ i\bar x \leq q \leq (i+1)\bar x}} \sum_{\substack{p\in\NN+1/2\\ p \in [i^2/q,(i+1)^2/q]}} \Phi(\bar x,p,q)
&\lesssim |x|^{2\alpha} \sum_{\substack{q\in\NN+1/2 \\ i\bar x \leq q \leq (i+1)\bar x}} \frac{i}{q} \left(\frac{i^2}{q}\right)^{1/2} \\
&\lesssim \frac{i|x|^{2\alpha-1}}{\sqrt{i|x|}} \lesssim i \max\{i^{-1},|x|\}^{2\alpha-1},
\end{split}\]
since $|x| \gtrsim 1/i$ in this range.
\end{proof}

\begin{proposition}\label{prp:indici_bassi}
Let $\epsilon \in (0,1)$. For all $i \in \NN \setminus \{0\}$,
\begin{equation}
\sup_{x \in [-1,1]}
\frac{1}{i} \sum_{\substack{(\ell,m)\in\IS \\ |m| \leq \epsilon [\ell] \\ \lambda_{\ell,m} \in [i^2,(i+1)^2]}} \left| \tSpH_{\ell,m}(x) \right|^2 \leq C_\epsilon
\end{equation}
\end{proposition}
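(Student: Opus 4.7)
The plan is to follow the general scheme of the proof of Proposition \ref{prp:indici_alti}, substituting the Bessel-regime pointwise estimates of Proposition \ref{prp:est_boyddunster} for the Hermite-regime ones. By \eqref{eq:Yindexparity} we may restrict the sum to $m \geq 0$. Under the constraints $m \leq \epsilon[\ell]$ and $\lambda_{\ell,m}+1/4 = [\ell]^2 - m^2 \in [i^2, (i+1)^2+1/4]$, one has $[\ell] \simeq i$ and $m \leq \epsilon' i$ for a suitable $\epsilon' = \epsilon'(\epsilon) < \infty$; moreover, for each such $m$ at most $O(1)$ values of $\ell$ contribute, since $\ell \mapsto \lambda_{\ell,m}$ is strictly increasing on $\ell \geq m$ with consecutive differences of order $[\ell] \simeq i$ and the target interval has length $\sim i$. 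The problem thus reduces to showing that $\sum_{m=0}^{\lfloor \epsilon' i \rfloor} |\tSpH_{\ell(m),m}(x)|^2 \lesssim i$ uniformly in $x \in [-1,1]$, where $\ell(m)$ denotes any valid choice and all implicit constants may depend on $\epsilon$.

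Set $y = \sqrt{1-x^2}$. Since $a_{\ell,m}^2 = 1-m^2/[\ell]^2$, we have $|x^2 - a_{\ell,m}^2| = |m^2/[\ell]^2 - y^2|$, so combining $[\ell] \simeq i$ with Proposition \ref{prp:est_boyddunster} gives the uniform bound $|\tSpH_{\ell,m}(x)|^2 \lesssim ((1+m)^{4/3}/i^2 + |m^2/i^2 - y^2|)^{-1/2}$, together with the sharper bound $|\tSpH_{\ell,m}(x)|^2 \lesssim (i/m) \, 4^{-m}$ whenever $m \geq 4iy$ (equivalent to $y \leq b_{\ell,m}/4$). I then split the sum over $m$ into four ranges relative to the ``turning-point index'' $iy$. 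For $m \leq iy/2$ (including $m=0$), $|m^2/i^2 - y^2| \simeq y^2$, hence $|\tSpH|^2 \lesssim 1/y$, and the $\lesssim iy$ such terms contribute $\lesssim i$ (while the $m=0$ term is separately controlled by $\min(i,1/y) \lesssim i$). For $2iy \leq m \leq 4iy$, the uniform bound gives $|\tSpH|^2 \lesssim i/m$, and since this range has bounded ratio $2$, the total is $\lesssim i$ with no logarithmic loss. For $m \geq \max(1, 4iy)$ the exponential bound combined with geometric summation yields $\lesssim i$. Finally, for the transition region $iy/2 \leq m \leq 2iy$ (which is empty when $iy < 2$, leaving only boundary cases), I set $u = m - iy$ so that the uniform bound reads $\lesssim (y^{4/3}/i^{2/3} + |u|y/i)^{-1/2}$; comparing the sum over $m$ with the integral $\int_0^{iy}(y^{4/3}/i^{2/3} + uy/i)^{-1/2}\,du$ via Lemma \ref{lem:sumintegral} and changing variables $v = uy/i$, I bound the contribution by $(i/y)(y^{4/3}/i^{2/3} + y^2)^{1/2} \lesssim i$ for $y \gtrsim 1/i$.

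The principal obstacle is precisely the transition region, where the pointwise peak $|\tSpH_{\ell,m}(x)|^2 \sim i^{1/3}/y^{2/3}$, attained at the turning point $m \simeq iy$, is much larger than the target bound $i$; the key observation is that this peak is reached only on a narrow ``resonance window'' of integer width $\sim i^{1/3} y^{4/3}$, so that the sum-to-integral comparison via Lemma \ref{lem:sumintegral} produces a total of order exactly $i$. This step parallels the last case in the proof of Proposition \ref{prp:indici_alti}, but with different exponents dictated by the Bessel-type rather than Hermite-type behaviour of $\tSpH_{\ell,m}$ near the turning point; additionally, one must verify the compatibility of the regularization term $(1+m)^{4/3}/i^2$ from Proposition \ref{prp:est_boyddunster} across the various case splits, and crucially invoke the exponential bound in the second part of that proposition in order to avoid an otherwise fatal $\log i$ loss coming from the ``far from turning point'' region when $y$ is very small.
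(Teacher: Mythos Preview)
Your overall strategy matches the paper's: reduce to $m\ge 0$, note that for each $m$ only $O(1)$ values of $\ell$ contribute, set $y=\sqrt{1-x^2}$, and split into ``far below'', ``transition'', and ``far above'' ranges relative to the turning point, invoking the exponential bound of Proposition~\ref{prp:est_boyddunster} in the last range and Lemma~\ref{lem:sumintegral} in the transition range. The paper does exactly this (with slightly different cutoff constants).

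There is, however, a genuine error in your handling of the transition region. From $[\ell]\simeq i$ you cannot conclude
\[
|\tSpH_{\ell,m}(x)|^2 \lesssim \bigl((1+m)^{4/3}/i^2 + |m^2/i^2 - y^2|\bigr)^{-1/2};
\]
the replacement of $|m^2/[\ell]^2-y^2|$ by $|m^2/i^2-y^2|$ inside the absolute value is not harmless. Indeed $[\ell]^2=m^2+j^2$ with $j\in[i,i+1]$, so the true turning point in $m$ is where $m^2/[\ell]^2=y^2$, i.e.\ $m=j\bar y$ with $\bar y=y/\sqrt{1-y^2}$, not $m=iy$. The shift $|j\bar y-iy|\simeq iy^3$ exceeds the resonance width $(iy)^{1/3}$ whenever $y\gtrsim i^{-1/4}$, and in that regime your claimed bound undershoots the actual peak value $\sim i^{1/3}/y^{2/3}$ of $|\tSpH_{\ell,m}|^2$ by a factor $\sim y^{-4/3}i^{1/3}$. (Concretely: at $m=j\bar y$ your expression gives $\sim y^{-2}$, but the true size is $\sim i^{1/3}/y^{2/3}$.) So the bound you integrate is simply false on part of the range.

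The fix is exactly what the paper does: first observe that in the transition region one necessarily has $y\le\epsilon^{1/2}<1$ (hence $\bar y\simeq y$), keep $[\ell]$ in the estimate, and centre the analysis at $m=i\bar y$ rather than $m=iy$. One then gets $|m^2/[\ell]^2-y^2|\simeq |m-j\bar y|\,y/i$, and your integral computation goes through verbatim with $u=m-i\bar y$; the uncertainty $j\in[i,i+1]$ only widens the peak window by $\bar y\lesssim 1$, which is harmless. Two minor points: the resonance width is $(iy)^{1/3}$, not $i^{1/3}y^{4/3}$ as you wrote; and the threshold ``$m\ge 4iy$'' for the exponential bound should be ``$m\ge 4C_\epsilon iy$'' since the hypothesis of Proposition~\ref{prp:est_boyddunster} is $y\le b_{\ell,m}/4=m/(4[\ell])$ and $[\ell]$ can exceed $i$ by a factor depending on $\epsilon$.
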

\begin{proof}
Similarly as in the proof of Proposition \ref{prp:indici_alti}, it is immediately seen that it suffices to prove the following estimate: for all $x \in [-1,1]$,
\begin{equation}
\sum_{\substack{(\ell,m)\in\IS \\ 0 \leq m \leq \epsilon [\ell] \\ \lambda_{\ell,m}+1/4 \in [i^2,(i+1)^2]}} \left| \tSpH_{\ell,m}(x) \right|^2 \leq C_\epsilon \, i.
\end{equation}
Note that
\[
\lambda_{\ell,m} +1/4 = [\ell]^2 - m^2.
\]

Let us consider first the part of the sum where $m=0$. Here the condition $\lambda_{\ell,0} + 1/4 \in [i^2,(i+1)^2]$ uniquely determines the value of $\ell$ and moreover $|\tSpH_{\ell,0}(x)|^2 \lesssim [\ell] \simeq i$ by Proposition \ref{prp:stime-classiche}\ref{en:stime_unif}, so
\[
\sum_{\substack{(\ell,m)\in\IS \\ m=0 \\ \lambda_{\ell,m}+1/4 \in [i^2,(i+1)^2]}} \left| \tSpH_{\ell,m}(x) \right|^2 \lesssim i.
\]
Hence in the rest of the proof we may assume $m > 0$.

Let us introduce the notation $y = \sqrt{1-x^2}$.
Recall also the definition of $b_{\ell,m}$ in
\eqref{eq:criticalpoints}
and observe that the condition $m \leq \epsilon [\ell]$ corresponds to $b_{\ell,m} \leq \epsilon$.

In the part of the sum where
$y \geq \epsilon^{-1/2}  b_{\ell,m}$, we have $|\tSpH_{\ell,m}(x)|^2 \lesssim y^{-1}$ by Proposition \ref{prp:est_boyddunster}, and moreover $y \gtrsim m/[\ell] \simeq m/i$. Hence
\[
\sum_{\substack{\ell,m \in \NN \\ 0< m \leq \epsilon [\ell] \\ [\ell]^2-m^2 \in [i^2,(i+1)^2] \\ \sqrt{1-x^2} \geq \epsilon^{-1/2} b_{\ell,m}}} \left| \tSpH_{\ell,m}(x) \right|^2 \lesssim \sum_{\substack{m \in \NN+1 \\ m \lesssim iy}} \sum_{\substack{\ell \in \NN \\ [\ell] \in [\sqrt{m^2+i^2},\sqrt{m^2+(i+1)^2}]}} y^{-1} \lesssim i,
\]
where we used the fact that the length of the interval $[\sqrt{m^2+i^2},\sqrt{m^2+(i+1)^2}]$ is controlled by $i/\sqrt{m^2+i^2} \simeq 1$.

In the part of the sum where $y \leq b_{\ell,m}/4$, by  Proposition \ref{prp:est_boyddunster} we have $|\tSpH_{\ell,m}(x)|^2 \lesssim b_{\ell,m}^{-1} 2^{-2m} \lesssim i 2^{-2m}$, and therefore
\[
\sum_{\substack{\ell,m \in \NN \\ m \leq \tilde\epsilon [\ell] \\ [\ell]^2-m^2 \in [i^2,(i+1)^2] \\ \sqrt{1-x^2} \leq b_{\ell,m}/4}} \left| \tSpH_{\ell,m}(x) \right|^2
\lesssim \sum_{m  \in \NN +1} \sum_{\substack{\ell \in \NN \\ [\ell] \in [\sqrt{m^2+i^2},\sqrt{m^2+(i+1)^2}]}} i 2^{-2m} \lesssim i,
\]
where again we used the fact that the length of $[\sqrt{m^2+i^2},\sqrt{m^2+(i+1)^2}]$ is controlled by $1$.

Finally we consider the part where $b_{\ell,m}/4 \leq y \leq \epsilon^{-1/2}  b_{\ell,m}$. In this range, $y \simeq m/[\ell] \simeq m/i$, and actually it must be
\[
y \leq \epsilon^{-1/2} \epsilon = \epsilon^{1/2} < 1;
\]
so, if we set $\bar y = y(1-y^2)^{-1/2}$, then
\[
y \simeq \bar y
\]
uniformly.
Moreover, by Corollary \ref{cor:est_combinata},
\[
|\tSpH_{\ell,m}(x)|^2 \lesssim \Psi(y,[\ell],m),
\]
where
\[
\Psi(y,\vell,m) = \left( \frac{m}{\vell^2} + \left|y^2-\frac{m^2}{\vell^2}\right|\right)^{-1/2}.
\]
Hence
\[
\sum_{\substack{\ell,m \in \NN \\ 0 < m \leq \epsilon [\ell] \\ [\ell]^2-m^2 \in [i^2,(i+1)^2] \\ b_{\ell,m}/4 \leq \sqrt{1-x^2} \leq \epsilon^{-1/2} b_{\ell,m}}} \left| \tSpH_{\ell,m}(x) \right|^2
\lesssim \sum_{\substack{m \in \NN +1 \\ m \simeq iy}} \sum_{\substack{\vell \in \NN+1/2 \\ \vell \in [\sqrt{m^2+i^2},\sqrt{m^2+(i+1)^2}]}} \Psi(y,\vell,m) \\
\]

We now split the sum in $m$ into three parts. Consider first the part where $m < i\bar y$.
Note that
\[
\frac{\partial \Psi}{\partial \vell}(y,\vell,m) = \Psi(y,\vell,m) \frac{\frac{m}{\vell^3} + \frac{m^2}{\vell^3} \sgn(\frac{m^2}{\vell^2}-y^2)}{\frac{m}{\vell^2} + \left|y^2-\frac{m^2}{\vell^2}\right|},
\]
so
\begin{equation}\label{eq:PsiLipschitz}
\left|\frac{\partial \Psi}{\partial \vell}(y,\vell,m)\right| \lesssim \Psi(y,\vell,m)
\end{equation}
whenever $1 \leq m \leq \vell$.
Moreover the length of $[\sqrt{m^2+i^2},\sqrt{m^2+(i+1)^2}]$ is bounded above and below uniformly by constants, since $m \lesssim i$. By Lemma \ref{lem:sumintegral}, we can then estimate the sum in $\vell$ by the corresponding integral:
\[\begin{split}
&\sum_{\substack{m \in \NN +1 \\ m \simeq iy \\ m < i \bar y}} \sum_{\substack{\vell \in \NN+1/2 \\ \vell \in [\sqrt{m^2+i^2},\sqrt{m^2+(i+1)^2}]}} \Psi(y,\vell,m) \\
&\lesssim \sum_{\substack{m \in \NN \\ m \simeq iy \\ m < i \bar y}} \int_{\substack{\vell \in [\sqrt{m^2+i^2},\sqrt{m^2+(i+1)^2}]}} \Psi(y,\vell,m) \,d\vell .
\end{split}\]
Note further that
\[
\begin{split}
\Psi(y,\vell,m)
&\leq \left|y^2-\frac{m^2}{\vell^2}\right|^{-1/2}  \\
&= (1-y^2)^{-1/2} \frac{\vell}{m} \left|\bar y^2 \frac{\vell^2-m^2}{m^2}-1\right|^{-1/2}
\lesssim \frac{i}{\bar y} \frac{\bar y^2 \vell}{m^2} \left|\bar y^2 \frac{\vell^2-m^2}{m^2}-1\right|^{-1/2}
\end{split}
\]
because $m \simeq i \bar y$.
The change of variables $u = \bar y^2 (\vell^2 -m^2)/m^2$ then gives
\[\begin{split}
&\sum_{\substack{m \in \NN+1 \\ m \simeq iy \\ m < i \bar y}} \sum_{\substack{\vell \in \NN +1/2 \\ \vell \in [\sqrt{m^2+i^2},\sqrt{m^2+(i+1)^2}]}} \Psi(y,\vell,m) \\
&\lesssim \frac{i}{\bar y} \sum_{\substack{m \in \NN +1\\ m \simeq iy \\ m < i\bar y}} \int_{\substack{u \in [\bar y^2 i^2/m^2,\bar y^2 (i+1)^2/m^2]}} \left|u-1\right|^{-1/2} \,du .
\end{split}\]
The condition $m < i\bar y$ implies that $u>1$ in the domain of integration, hence
\[\begin{split}
&\int_{\substack{u \in [\bar y^2 i^2/m^2,\bar y^2 (i+1)^2/m^2]}} \left|u-1\right|^{-1/2} \,du \\
&= 2 [(\bar y^2 (i+1)^2/m^2 -1)^{1/2} - (\bar y^2 i^2/m^2 -1)^{1/2} ] \\
&\lesssim i^{-1} (1-m^2/(\bar y^2 i^2))^{-1/2}
\end{split}\]
where the fact that $m \simeq i\bar y$ was used. Therefore
\[\begin{split}
&\sum_{\substack{m \in \NN +1 \\ m \simeq iy \\ m < i \bar y}} \sum_{\substack{\vell \in \NN +1/2 \\ \vell \in [\sqrt{m^2+i^2},\sqrt{m^2+(i+1)^2}]}} \Psi(y,\vell,m) \\
&\lesssim \bar y^{-1} \sum_{\substack{m \in \NN+1 \\ m \simeq iy \\ m < i\bar y}} (1-m^2/(\bar y^2 i^2))^{-1/2}  \\
&\lesssim i \int_{\substack{m \simeq iy \\ m < i\bar y}} (1-m^2/(\bar y^2 i^2))^{-1/2} \,\frac{dm}{i\bar y}
\lesssim i .
\end{split}\]

A similar estimate can be obtained in the part of the sum where $m > (i+1) \bar y$. Finally, in the part where $i \bar y \leq m \leq (i+1) \bar y$, the number of summands in $m$ is bounded by a constant, hence we obtain the bound
\[\begin{split}
&\sum_{\substack{m \in \NN +1 \\ i \bar y \leq m \leq (i+1) \bar y}} \sum_{\substack{\vell \in \NN +1/2 \\ \vell \in [\sqrt{m^2+i^2},\sqrt{m^2+(i+1)^2}]}} \Psi(y,\vell,m)  \\
&\lesssim \sum_{\substack{m \in \NN+1 \\ i \bar y \leq m \leq (i+1) \bar y}} \sum_{\substack{\vell \in \NN +1/2 \\ \vell \in [\sqrt{m^2+i^2},\sqrt{m^2+(i+1)^2}]}} \vell/m^{1/2} \\
&\lesssim \frac{i}{\sqrt{i \bar y}} \lesssim i
\end{split}\]
where we used the fact that
$i\bar y \simeq m \gtrsim 1$.
\end{proof}

The previous estimates allow us to prove a ``weighted Plancherel-type estimate'' for the Grushin operator $\opL$.  For all $r \in (0,\infty)$, define the weight $\weight_r : \sfera \times \sfera \to [0,\infty)$ by
\begin{equation}\label{eq:weight}
\weight_r(\spnt{\theta}{\varphi},\spnt{\theta'}{\varphi'}) = \frac{|\theta|}{\max\{r,|\theta'|\}}.
\end{equation}

Similarly as in \cite{CoS,DOS}, for all $N \in \NN \setminus \{0\}$ and $F : \RR \to \CC$ supported in $[0,1]$, let the norm $\|F\|_{N,2}$ be defined by
\[
\|F\|_{N,2} = \left(\frac{1}{N} \sum_{i=1}^N \sup_{\lambda \in [(i-1)/N,i/N]} |F(\lambda)|^2\right)^{1/2}.
\]
Moreover, similarly as in \cite{M}, for all $r \in (0,\infty)$, $\alpha,\beta \in [0,\infty)$, $p \in [1,\infty]$ and $K : \sfera \times \sfera \to \CC$, let the norm $\vvvert K \vvvert_{p,\beta,\alpha,r}$ be defined by
\[
\vvvert K \vvvert_{p,\beta,\alpha,r}
= \esssup_{z' \in \sfera} V(z',r)^{1/p'} \| (1+\dist(\cdot,z')/r)^\beta \, (1+\weight_r(\cdot,z'))^\alpha \, K(\cdot,z') \|_{L^p(\sfera)},
\]
where $p'=p/(p-1)$ is the conjugate exponent to $p$.

\begin{proposition}\label{prp:weighted_plancherel}
Let $\alpha \in [0,1/2)$ and $N \in \NN \setminus \{0\}$. For all Borel functions $F : \RR \to \CC$ supported in $[0,N]$,
\[
\vvvert \Kern_{F(\sqrt{\opL})} \vvvert_{2,0,\alpha,N^{-1}}
\leq C_\alpha \|F(N \cdot)\|_{N,2}
\]
\end{proposition}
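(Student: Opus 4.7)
The plan is to bound, uniformly in $z' \in \sfera$, the quantity
\[
V(z',N^{-1}) \int_\sfera (1+\weight_{N^{-1}}(z,z'))^{2\alpha} |\Kern_{F(\sqrt\opL)}(z,z')|^2 \, d\meas(z)
\]
by $C_\alpha^2 \|F(N\cdot)\|_{N,2}^2$. The elementary inequality $(1+t)^{2\alpha} \lesssim 1+t^{2\alpha}$ together with the volume estimate $V(z',N^{-1}) \simeq N^{-2}\max\{N^{-1},|\theta'|\}$ from Proposition \ref{prp:subriemannian} reduce the task to controlling the unweighted integral $J_0(z') := \int_\sfera |\Kern_{F(\sqrt\opL)}(z,z')|^2 \,d\meas(z)$ and the $|\theta|^{2\alpha}$-weighted integral $J_\alpha(z') := \int_\sfera |\theta|^{2\alpha} |\Kern_{F(\sqrt\opL)}(z,z')|^2 \,d\meas(z)$.

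For $J_0$, formula \eqref{eq:nucleoquadrato} expands it as a spectral sum over $(\ell,m) \in \IS$. I would dyadically decompose this sum by $\lambda_{\ell,m} \in [i^2,(i+1)^2]$ for $i \in \NN$, bound $|F(\sqrt{\lambda_{\ell,m}})|^2$ by $\sup_{\mu \in [i,i+1]}|F(\mu)|^2$ on each block, and combine the high-index estimate of Proposition \ref{prp:indici_alti} with $\alpha = 0$ and the low-index estimate of Proposition \ref{prp:indici_bassi}, obtaining $\sum_{\lambda_{\ell,m} \in [i^2,(i+1)^2]} |\tSpH_{\ell,m}(\sin\theta')|^2 \lesssim i\bigl(1 + \max\{1/i,|\sin\theta'|\}^{-1}\bigr)$. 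A straightforward case analysis distinguishing whether $|\theta'|$ lies above or below $N^{-1}$ and whether $|\sin\theta'|$ lies above or below $1/i$ then shows that $V(z',N^{-1})$ multiplied by this per-block bound is $\lesssim 1/N$ uniformly for $i \leq N$; summing over $i$ gives $V(z',N^{-1}) J_0(z') \lesssim \|F(N\cdot)\|_{N,2}^2$.

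For $J_\alpha$ I would use the kernel decomposition $\Kern_{F(\sqrt\opL)}(\cdot,z') = K_0 + K_L + K_H$ into zonal, low-nonzero-index, and high-index sectors, i.e.\ the summations over $m = 0$, $0 < |m| < \epsilon[\ell]$ and $|m| \geq \epsilon[\ell]$, for a fixed $\epsilon \in (0,1)$. For $K_0$ and $K_L$, I would crudely bound $|\theta|^{2\alpha} \lesssim 1$ and reduce to the unweighted integrals, which are controlled respectively by Proposition \ref{prp:stime-classiche}\ref{en:stime_unif} (recall that there is essentially one zonal harmonic per dyadic block) and Proposition \ref{prp:indici_bassi}; the extra prefactor $\max\{N^{-1},|\theta'|\}^{-2\alpha}$ arising from the weight is absorbed because the condition $\alpha < 1/2$ yields $\max\{N^{-1},|\theta'|\}^{1-2\alpha} \lesssim 1$. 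The subtle piece is $K_H$: since all its terms have $m \neq 0$, $K_H(\cdot,z')$ is orthogonal to every zonal harmonic $\SpH_{\ell,0}$, so Lemma \ref{lem:weighted_est} applies, and combined with the elementary inequality $|\theta| \leq |\tan\theta|$ on $[-\pi/2,\pi/2]$ it yields
\[
\int_\sfera |\theta|^{2\alpha} |K_H|^2 \, d\meas \leq \sum_{(\ell,m):\, |m| \geq \epsilon[\ell]} \lambda_{\ell,m}^\alpha |m|^{-2\alpha} |F(\sqrt{\lambda_{\ell,m}})|^2 |\tSpH_{\ell,m}(\sin\theta')|^2,
\]
whose dyadic blocks are precisely the ones estimated in Proposition \ref{prp:indici_alti}, producing a per-block bound of $\lesssim i \max\{1/i,|\sin\theta'|\}^{2\alpha-1}$.

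The main obstacle is then verifying that the product of the prefactor $N^{-2}\max\{N^{-1},|\theta'|\}^{1-2\alpha}$ with this $K_H$-block bound is $\lesssim 1/N$ uniformly in $i \leq N$. This requires essentially the same four-case analysis used for $J_0$: in the regime $|\theta'| \geq N^{-1}$ and $|\sin\theta'| \geq 1/i$, the factors $|\theta'|^{1-2\alpha}$ and $|\sin\theta'|^{2\alpha-1}$ cancel exactly and leave $N^{-2} i \lesssim 1/N$, while in the complementary regimes the exponents on $N$, $|\theta'|$ and $i$ recombine to the same bound, crucially using $\alpha < 1/2$ so that $1-2\alpha > 0$ and $\max\{N^{-1},|\theta'|\}^{1-2\alpha}$ remains bounded as $|\theta'|$ shrinks to zero. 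Summing the resulting $\lesssim 1/N$ per block over $i \leq N$ and combining the $K_0$, $K_L$ and $K_H$ contributions yields the desired Plancherel-type estimate.
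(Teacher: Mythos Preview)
Your proposal is correct and follows essentially the same route as the paper. The paper's proof splits the kernel into only two pieces $K_1$ (low indices $|m|\le [\ell]/2$) and $K_2$ (high indices), applies Lemma~\ref{lem:weighted_est} to $K_2$ exactly as you do to $K_H$, and then invokes Propositions~\ref{prp:indici_alti} and~\ref{prp:indici_bassi}; your separate treatment of the zonal piece $K_0$ is harmless but unnecessary (Proposition~\ref{prp:indici_bassi} already covers $m=0$), and the four-case analysis you describe is precisely the content of the reduction to the two displayed inequalities that the paper writes down before citing those propositions.
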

\begin{proof}
Note that it is enough to prove that
\begin{equation}\label{eq:weighted_reduced}
\| \weight_{N^{-1}}(\cdot,z')^\alpha \, \Kern_{F(\sqrt{\opL})}(\cdot,z') \|_{L^2(\sfera)} \leq C_\alpha V(z',N^{-1})^{-1/2} \|F(N \cdot)\|_{N,2}
\end{equation}
for all $z' \in \sfera$.
Indeed Proposition \ref{prp:weighted_plancherel} follows by combining the estimate \eqref{eq:weighted_reduced} with the analogous one where $\alpha = 0$.

We now decompose
\[\begin{split}
\Kern_{F(\sqrt{\opL})}(z,z')
&= \sum_{(\ell,m) \in \IS} F(\sqrt{\lambda_{\ell,m}}) \,  \SpH_{\ell,m}  (z) \overline{\SpH_{\ell,m} (z')} \\
&= \sum_{\substack{(\ell,m) \in \IS \\ |m| \leq [\ell]/2}} + \sum_{\substack{(\ell,m) \in \IS \\ |m| > [\ell]/2}} =: K_1(z,z') +K_2(z,z'),
\end{split}\]
and observe that
\[
K_2(\cdot,z') = \opL^{-\alpha/2} |T|^\alpha K_{2,\alpha} (\cdot,z'),
\]
where
\[
K_{2,\alpha}(z,z') = \sum_{\substack{(\ell,m) \in \IS \\ |m| > [\ell]/2}} \lambda_{\ell,m}^{\alpha/2} \, |m|^{-\alpha} \, F(\sqrt{\lambda_{\ell,m}}) \,  \SpH_{\ell,m}(z) \overline{\SpH_{\ell,m}(z')}.
\]

Hence
\[\begin{split}
&\| \weight_{N^{-1}}(\cdot,\spnt{\theta'}{\varphi'})^\alpha \Kern_{F(\sqrt{\opL})}(\cdot,\spnt{\theta'}{\varphi'}) \|_{L^2(\sfera)} \\
&\leq
\| \weight_{N^{-1}}(\cdot,\spnt{\theta'}{\varphi'})^\alpha K_1(\cdot,\spnt{\theta'}{\varphi'}) \|_{L^2(\sfera)} +
\| \weight_{N^{-1}}(\cdot,\spnt{\theta'}{\varphi'})^\alpha K_2(\cdot,\spnt{\theta'}{\varphi'}) \|_{L^2(\sfera)} \\
&\lesssim
\min\{N,|\theta'|^{-1}\}^\alpha \left[ \| K_1(\cdot,\spnt{\theta'}{\varphi'}) \|_{L^2(\sfera)} +
\| \mathrm{t}^\alpha K_2(\cdot,\spnt{\theta'}{\varphi'}) \|_{L^2(\sfera)} \right] \\
&\leq
\min\{N,|\theta'|^{-1}\}^\alpha \left[ \| K_1(\cdot,\spnt{\theta'}{\varphi'}) \|_{L^2(\sfera)} +
\| K_{2,\alpha}(\cdot,\spnt{\theta'}{\varphi'}) \|_{L^2(\sfera)} \right]
\end{split}\]
where $\mathrm{t}(\spnt{\theta}{\varphi}) = |\tan\theta|$ and in the last step Lemma \ref{lem:weighted_est} was used.
By \eqref{eq:volume}, the desired estimate \eqref{eq:weighted_reduced} is then reduced to proving that
\begin{align}
\| K_1(\cdot,\spnt{\theta'}{\varphi'}) \|_{L^2(\sfera)}^2 &\leq C_\alpha N^2 \min\{ N, |\theta'|^{-1} \}^{1-2\alpha} \|F(N\cdot)\|_{N,2}^2, \label{eq:est_bassi_orig}\\
\| K_{2,\alpha}(\cdot,\spnt{\theta'}{\varphi'}) \|_{L^2(\sfera)}^2 &\leq C_\alpha N^2 \min\{ N, |\theta'|^{-1} \}^{1-2\alpha} \|F(N\cdot)\|_{N,2}^2 \label{eq:est_alti},
\end{align}
and actually, instead of \eqref{eq:est_bassi_orig}, we shall prove the stronger estimate
\begin{equation}\label{eq:est_bassi}
\| K_1(\cdot,\spnt{\theta'}{\varphi'}) \|_{L^2(\sfera)}^2 \leq C N^2 \|F(N\cdot)\|_{N,2}^2.
\end{equation}

By the orthonormality of the spherical harmonics $\SpH_{\ell,m}$, the estimates \eqref{eq:est_alti} and \eqref{eq:est_bassi} can be rewritten as
\begin{multline*}
\sum_{\substack{(\ell,m) \in \IS \\ |m| > [\ell]/2}} \lambda_{\ell,m}^{\alpha} |m|^{-2\alpha} |F(\sqrt{\lambda_{\ell,m}})|^2 |\SpH_{\ell,m}(\spnt{\theta'}{\varphi'})|^2 \\
\leq C_\alpha N \min\{ N, |\theta'|^{-1} \}^{1-2\alpha} \sum_{i=1}^N \sup_{\lambda \in [i-1,i]} |F(\lambda)|^2
\end{multline*}
and
\[
\sum_{\substack{(\ell,m) \in \IS \\ |m| \leq [\ell]/2}} |F(\sqrt{\lambda_{\ell,m}})|^2 |\SpH_{\ell,m}(\spnt{\theta'}{\varphi'})|^2 \leq C N \sum_{i=1}^N \sup_{\lambda \in [i-1,i]} |F(\lambda)|^2.
\]
So we are reduced to proving that
\[
\sum_{\substack{(\ell,m) \in \IS \\ |m| > [\ell]/2 \\ \lambda_{\ell,m} \in [(i-1)^2,i^2]}} \lambda_{\ell,m}^{\alpha} \, |m|^{-2\alpha} \, |\SpH_{\ell,m}(\spnt{\theta'}{\varphi'})|^2 \leq C_\alpha N \min\{ N, |\theta'|^{-1} \}^{1-2\alpha}
\]
and
\[
\sum_{\substack{(\ell,m) \in \IS \\ |m| \leq [\ell]/2 \\ \lambda_{\ell,m} \in [(i-1)^2,i^2]}} |\SpH_{\ell,m}(\spnt{\theta'}{\varphi'})|^2 \leq C N
\]
for $i=1,\dots,N$. For $i=1$, the above estimates are immediately verified (the sums have at most one summand corresponding to $(\ell,m) = (0,0)$ or $(\ell,m) = (1,1)$ and the spherical harmonics $\SpH_{\ell,m}$ are bounded functions). For $i=2,\dots,N$, these estimates follow from Propositions \ref{prp:indici_alti} and \ref{prp:indici_bassi}.
\end{proof}

As an immediate consequence of the weighted Plancherel-type estimate, we obtain the following on-diagonal bound for the heat propagator associated to $\opL$.

\begin{corollary}\label{cor:heatkernel}
For all $r \in (0,\infty)$,
\[
\vvvert \Kern_{\exp(-r^2 \opL)} \vvvert_{2,0,0,r} \leq C.
\]
\end{corollary}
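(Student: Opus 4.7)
The plan is to apply Proposition \ref{prp:weighted_plancherel} (specialised to $\alpha=0$) combined with a dyadic decomposition of the multiplier $F(\lambda) = \exp(-r^2 \lambda^2)$. Unpacking the definitions, $\alpha=\beta=0$ and $p=p'=2$ reduce the claim to showing
\[
V(z',r)^{1/2} \, \|\Kern_{\exp(-r^2\opL)}(\cdot,z')\|_{L^2(\sfera)} \leq C
\]
uniformly in $z' \in \sfera$ and $r \in (0,\infty)$. Fix a smooth partition of unity $1 = \eta_0(\lambda) + \sum_{k\geq 1} \eta(2^{-k}\lambda)$ on $[0,\infty)$ with $\supp \eta_0 \subseteq [0,2]$ and $\supp\eta \subseteq [1/2,2]$, and write $F = \sum_{k\geq 0} F_k$ where $F_k(\lambda) = F(\lambda)\eta_k(\lambda)$ is supported in $I_k := [0,2]$ if $k=0$ and $I_k := [2^{k-1},2^{k+1}]$ if $k\geq 1$.

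The key idea is that for each $k$ one is free to apply Proposition \ref{prp:weighted_plancherel} with any integer $N_k$ such that $I_k \subseteq [0,N_k]$, so I choose
\[
N_k := \max\{\lceil 1/r \rceil,\, 2^{k+1}\}.
\]
This gives the dyadic bound
\[
V(z',N_k^{-1})^{1/2} \|\Kern_{F_k(\sqrt{\opL})}(\cdot,z')\|_{L^2} \leq C \|F_k(N_k\cdot)\|_{N_k,2},
\]
and by the triangle inequality
\[
V(z',r)^{1/2} \|\Kern_{F(\sqrt{\opL})}(\cdot,z')\|_{L^2} \leq C \sum_{k\geq 0} \frac{V(z',r)^{1/2}}{V(z',N_k^{-1})^{1/2}} \|F_k(N_k\cdot)\|_{N_k,2}.
\]
I then estimate each of the two factors in the summands separately, distinguishing the regimes $2^{k+1} \leq 1/r$ (``low frequencies'') and $2^{k+1} > 1/r$ (``high frequencies'').

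For the volume ratio, the explicit formula \eqref{eq:volume} makes $V(z',\cdot)$ a doubling function with doubling exponent at most $3$, so $V(z',r)/V(z',N_k^{-1}) \leq C (rN_k)^3$ whenever $N_k^{-1}\leq r$ and $\leq 1$ otherwise. For the spectral factor, I bound
\[
\|F_k(N_k\cdot)\|_{N_k,2}^2 = \frac{1}{N_k}\sum_{i=1}^{N_k} \sup_{\mu\in[i-1,i]} |F_k(\mu)|^2
\]
by noting that only $O(2^k)$ of the integer subintervals meet $I_k$ and on each of them $|F_k(\mu)|^2 \leq \exp(-2r^2 \mu^2)$. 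In the high-frequency regime ($N_k = 2^{k+1}$, $r 2^k \gtrsim 1$) the Gaussian factor gives $\|F_k(N_k\cdot)\|_{N_k,2} \lesssim \exp(-c(r 2^k)^2)$, and combined with the volume ratio $(r N_k)^{3/2}$ the corresponding tail is summable via $\sum_{s\in r 2^{\NN}} s^{3/2} e^{-cs^2} < \infty$. In the low-frequency regime ($N_k = \lceil 1/r\rceil$), we have $V(z',r) \simeq V(z',N_k^{-1})$ and, using only $|F_k|\leq 1$, one obtains $\|F_k(N_k\cdot)\|_{N_k,2}^2 \lesssim 2^k/N_k \simeq r\, 2^k$, whose square root sums geometrically up to $k \simeq \log_2(1/r)$ to give a uniform bound.

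The main technical point to be careful about is precisely this last step: if one naively takes $N_k = 2^{k+1}$ for every $k$, then for $r 2^k \ll 1$ the estimate $\|F_k(N_k\cdot)\|_{N_k,2} \lesssim 1$ combined with $\simeq \log_2(1/r)$ terms produces a spurious logarithmic divergence. The choice $N_k = \max\{\lceil 1/r\rceil,2^{k+1}\}$, exploiting the freedom in Proposition \ref{prp:weighted_plancherel}, is what removes this obstruction and yields the uniform bound.
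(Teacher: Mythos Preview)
Your approach is essentially the paper's: a dyadic decomposition of the Gaussian multiplier, Proposition~\ref{prp:weighted_plancherel} applied piece by piece, and the doubling property of the volume to pass between scales. The paper lumps all frequencies below $\sim 1/r$ into a single piece $F_0$ supported on $[0,2^{k_0})$ with $2^{k_0}\simeq 1/r$, applying Proposition~\ref{prp:weighted_plancherel} once at that scale; your device of keeping the low-frequency pieces separate but choosing $N_k=\lceil 1/r\rceil$ for each of them achieves the same thing and is a nice observation.

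There is one small slip worth noting: your Gaussian bound $\|F_k(N_k\cdot)\|_{N_k,2}\lesssim\exp(-c(r2^k)^2)$ fails for $k=0$, since $\supp F_0=[0,2]$ contains $0$ and $|F_0(0)|=1$. This only matters when $r>1/2$ (so that $k=0$ falls in your ``high-frequency'' regime), and combined with your doubling estimate $(rN_0)^{3/2}=(2r)^{3/2}$ for the volume ratio it would give an unbounded contribution as $r\to\infty$. The fix is immediate by compactness: $V(z',r)\leq\meas(\sfera)$ while $V(z',1/2)$ is bounded below uniformly in $z'$ by \eqref{eq:volume}, so the $k=0$ term is $O(1)$ regardless. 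The paper handles this boundary case by treating $k_0=0$ (i.e.\ $r\geq 1$) separately, observing that the only eigenvalue of $\sqrt{\opL}$ in $[0,1)$ is $0$.
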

\begin{proof}
Let $k_0 = \min \{k \in \NN \tc r \geq 2^{-k}\}$.
Let $F_0(\lambda) = \exp(-r^2\lambda^2) \chi_{[0,2^{k_0})}(\lambda)$ and $F_k(\lambda) = \exp(-r^2\lambda^2) \, \chi_{[2^{k_0+k-1},2^{k_0+k})}(\lambda)$ for $k > 0$.
Note that $\supp F_k \subseteq [0,2^{k_0+k}]$. Hence, by Proposition \ref{prp:weighted_plancherel}, for all $k>0$,
\[
\vvvert \Kern_{F_k(\sqrt{\opL})} \vvvert_{2,0,0,2^{-(k_0+k)}} \leq C \| F_k \|_\infty = C \exp(-2^{2(k_0+k-1)} r^2).
\]
So, by the doubling condition,
\[\begin{split}
\vvvert \Kern_{\exp(-t^2 \opL)} \vvvert_{2,0,0,r}
&\leq \sum_{k \in \NN} \vvvert \Kern_{F_k(\sqrt{\opL})} \vvvert_{2,0,0,r} \\
&\leq \vvvert \Kern_{F_0(\sqrt{\opL})} \vvvert_{2,0,0,r} + C \sum_{k > 0} (r 2^{k_0+k})^{Q/2} \vvvert \Kern_{F_k(\sqrt{\opL})} \vvvert_{2,0,0,2^{-(k_0+k)}} \\
&\leq \vvvert \Kern_{F_0(\sqrt{\opL})} \vvvert_{2,0,0,r} + C \sum_{k >0} (r 2^{k_0+k})^{Q/2} \exp(-2^{2(k_0+k-1)} r^2),
\end{split}\]
where $Q = 3$ is the homogeneous dimension of $\sfera$ with the given sub-Riemannian structure. Since $r 2^{k_0} \geq 1$, it is easily seen that the last sum in $k$ is bounded uniformly in $r$, so it remains to control the term  $\vvvert \Kern_{F_0(\sqrt{\opL})} \vvvert_{2,0,0,r}$.

If $k_0 > 0$, then similarly as before it is seen that
\[
\vvvert \Kern_{F_0(\sqrt{\opL})} \vvvert_{2,0,0,r} \leq C (r2^{k_0})^{Q/2} \vvvert \Kern_{F_0(\sqrt{\opL})} \vvvert_{2,0,0,2^{-k_0}} \leq C (r2^{k_0})^{Q/2} \leq C 2^{Q/2},
\]
since $r 2^{k_0-1} \leq 1$. If $k_0 = 0$, then the only eigenvalue of $\sqrt{\opL}$ in $[0,2^{k_0})$ is $0$, hence
\[
\Kern_{F_0(\sqrt{\opL})}(z,z') = \SpH_{0,0}(z) \overline{\SpH_{0,0}(z')} = 1
\]
and trivially $\vvvert \Kern_{F_0(\sqrt{\opL})} \vvvert_{2,0,0,r} \leq \meas(\sfera)^{1/2} \sup_{z' \in \sfera} \| \Kern_{F_0(\sqrt{\opL})}(\cdot,z')\|_{L^2(\sfera)} \leq C$.
\end{proof}

\section{The multiplier theorem}\label{s:multiplier}

We shall need some properties of the weight $\weight_r$ defined in \eqref{eq:weight}. We refer to \cite[Lemma 12]{MSi} and \cite[Lemma 4.1]{M} for similar results.

\begin{lemma}\label{lem:weight}
For all $r>0$ and $\alpha,\beta \geq 0$ such that $\alpha+\beta > 3$ and $\alpha < 1$, and for all $z' \in \sfera$,
\begin{equation}\label{eq:weightsint}
\int_{\sfera} (1+\dist(z,z')/r)^{-\beta} (1+\weight_r(z,z'))^{-\alpha} \,d\meas(z) \leq C_{\alpha,\beta} V(z',r).
\end{equation}
Moreover
\begin{equation}\label{eq:weightsineq}
1+ \weight_r(z,z') \leq C (1+\dist(z,z')/r)
\end{equation}
for all $r>0$ and $z,z' \in \sfera$.
\end{lemma}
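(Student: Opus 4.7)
My plan is to prove the pointwise inequality \eqref{eq:weightsineq} first and then to exploit it, together with a dyadic decomposition centred at $z'$, to establish the integral estimate \eqref{eq:weightsint}.

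For \eqref{eq:weightsineq}, I will write $z = \spnt{\theta}{\varphi}$, $z' = \spnt{\theta'}{\varphi'}$ and set $R = \max\{r,|\theta'|\}$, so that $\weight_r(z,z') = |\theta|/R$. If $|\theta| \leq 2R$ the estimate is immediate. Otherwise $|\theta| > 2R \geq 2|\theta'|$, which forces $|\theta-\theta'| \geq |\theta|/2$; Proposition \ref{prp:subriemannian} then yields $\dist(z,z') \gtrsim |\theta-\theta'| \geq |\theta|/2 \geq r\,\weight_r(z,z')/2$, as required.

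For \eqref{eq:weightsint}, I will assume $V(z',r) \ll 1$ (otherwise the integral is trivially bounded by $\meas(\sfera) \simeq 1 \simeq V(z',r)$), and introduce the sub-Riemannian balls $D_k = \{z : \dist(z,z') \leq 2^k r\}$, $k \geq 0$. Since $(1+\dist/r)^{-\beta} \lesssim 2^{-k\beta}$ on $D_k \setminus D_{k-1}$, the integral in \eqref{eq:weightsint} is dominated by $\sum_{k \geq 0} 2^{-k\beta} \int_{D_k} (1+\weight_r)^{-\alpha}\,d\meas$, so the whole proof reduces to the local estimate
\[
\int_{D_k} (1+\weight_r(z,z'))^{-\alpha} \,d\meas(z) \lesssim V(z',r) \,2^{k(3-\alpha)} \qquad (k \geq 0),
\]
because the resulting geometric series $\sum_k 2^{k(3-\alpha-\beta)}$ converges precisely under the hypothesis $\alpha+\beta>3$.

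To prove this local estimate I will use Fubini in the coordinates $(\theta,\varphi)$. Proposition \ref{prp:subriemannian} yields that, at each fixed $\theta$, the $\varphi$-cross-section of $D_k$ has arc-length $\lesssim \min\{1, 2^k r \max\{2^k r,|\theta|,|\theta'|\}\}$, and that $D_k$ is empty for $|\theta-\theta'| \gtrsim 2^k r$. One is thus left with a one-variable integral of $(1+|\theta|/R)^{-\alpha}$ in $\theta$ over an interval of length $\lesssim 2^k r$; the condition $\alpha<1$ is precisely what guarantees that this integral is of order $R(2^k r/R)^{1-\alpha}$ once $2^k r \gtrsim R$. A short case analysis on whether $|\theta'| \gtrless r$ and $2^k r \gtrless R$, compared with $V(z',r) \simeq r^2 R$, then produces the bound $V(z',r)\,2^{k(3-\alpha)}$ in each configuration. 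The main obstacle is purely bookkeeping: verifying that the exponent $3-\alpha$—coming from the volume-growth rate $3$ of the balls $D_k$ tempered by the gain $-\alpha$ in the weighted $\theta$-integral—appears uniformly in every configuration.
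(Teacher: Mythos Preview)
Your argument is correct and follows a genuinely different route from the paper's for the integral estimate \eqref{eq:weightsint}. The proof of \eqref{eq:weightsineq} is essentially the same in both cases.

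For \eqref{eq:weightsint}, the paper does \emph{not} perform a dyadic decomposition in $\dist(\cdot,z')$. Instead, after reducing to $r<1$, it splits $\sfera$ into three regions $\cS_1,\cS_2,\cS_3$ according to which branch of the $\min$ in the distance formula of Proposition~\ref{prp:subriemannian} is active and whether $|\theta'|\leq |\theta|/2$ or not. On $\cS_1$ it uses \eqref{eq:weightsineq} to trade the factor $(1+\weight_r)^{-\alpha}$ for an extra power of $(1+\dist/r)^{-1}$ and then integrates the resulting product directly in the coordinates $(\theta,\varphi)$; on $\cS_2$ and $\cS_3$ it simply drops $(1+\weight_r)^{-\alpha}$ and integrates $(1+\dist/r)^{-\beta}$ directly, exploiting that on those regions the $\varphi$-contribution to the distance is $\simeq |\varphi-\varphi'|/\max\{|\tan\theta|,|\tan\theta'|\}$.

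Your approach packages the same information differently: the dyadic annuli localise the distance, and the single local estimate $\int_{D_k}(1+\weight_r)^{-\alpha}\,d\meas\lesssim V(z',r)\,2^{k(3-\alpha)}$ absorbs all the geometry, after which the conclusion follows from summing a geometric series. This is slightly more systematic and makes the role of the threshold $\alpha+\beta>3$ transparent as the convergence condition for $\sum_k 2^{k(3-\alpha-\beta)}$. The paper's decomposition, on the other hand, avoids the case analysis on $2^k r\gtrless R$ and $|\theta'|\gtrless r$ by choosing regions adapted to the distance formula from the start; it also sidesteps the minor issue that your cross-section bound uses $\max\{|\theta|,|\theta'|\}$ rather than $\max\{|\tan\theta|,|\tan\theta'|\}$, which only coincide away from the poles (your argument still goes through there because the trivial bound on the cross-section and the Riemannian volume growth take over, but this deserves a sentence).
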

\begin{proof}

Due to the compactness of $\sfera$, both
\eqref{eq:weightsint} and
 \eqref{eq:weightsineq}
are trivial for $r\ge 1$.
Thus we  assume from now on that $r<1$.

First, we observe that, as a consequence of \eqref{eq:rho-dist},
\[
\frac{|\theta|}{\max\{r,|\theta'|\}}
\leq 1 + \frac{|\theta-\theta'|}{\max\{r,|\theta'|\}}
\leq 1 + \dist(\spnt{\theta}{\varphi},\spnt{\theta'}{\varphi'})/r,
\]
proving \eqref{eq:weightsineq}.

In order to prove \eqref{eq:weightsint}, we fix $z' = \spnt{\theta'}{\varphi'} \in \sfera$ and split the integral in the left-hand side of \eqref{eq:weightsint}
into the sum $\sum_{j=1}^3 \cI_j$, where
\[
\cI_j=
\int_{\cS_j} (1+\dist(z,z')/r)^{-\beta} (1+\weight_r(z,z'))^{-\alpha} \,d\meas(z)
\]
and
\begin{align*}
\cS_1&=\left\{\spnt{\theta}{\varphi} \in \sfera \tc  {|\varphi-\varphi'|}^{1/2} \leq \frac{|\varphi-\varphi'|}{\max\{|\tan \theta|,|\tan \theta'|\}}\right\},\\
\cS_2&=\left\{\spnt{\theta}{\varphi}\in \sfera \setminus \cS_1 \tc |\theta'| \leq |\theta|/2 \right\},\\
\cS_3&=\left\{\spnt{\theta}{\varphi}\in \sfera \setminus \cS_1 \tc |\theta|/2 < |\theta'| \right\}.
\end{align*}

In order to estimate
$\cI_1$, we
decompose $\beta=\beta_1+\beta_2$, with $\beta_1>1-\alpha$ and $\beta_2>2$.
Then, by \eqref{eq:rho-dist},
\begin{align*}
\cI_1
&\leq
\int_{\cS_1} (1+\dist(z,z')/r)^{-\beta_2} (1+\weight_r(z,z'))^{-\alpha-\beta_1} \,d\meas(z) \\
&\leq
\int_{0}^{2\pi} (1+|\varphi-\varphi'|^{1/2}/r)^{-\beta_2} \,d\varphi
\int_{-\pi/2}^{\pi/2} \left(1+\frac{|\theta|}{\max\{r,|\theta'|\}}\right)^{-\alpha-\beta_1} \,d\theta \\
&\lesssim r^{2}\, \max\{r,|\theta'|\}
\simeq V(z',r).
\end{align*}

As for $\cI_2$, we write $\beta=\tilde\beta_1+ \tilde\beta_2$, with both $\tilde\beta_1$ and $\tilde\beta_2$ larger than $1$, so, again by \eqref{eq:rho-dist},
\[\begin{split}
\cI_2
&\lesssim \int_{\cS_2} \left(1+\frac{|\theta-\theta'|}{r}+\frac{|\varphi-\varphi'|}{r |\tan \theta|}\right)^{-\beta} \,d\meas(\spnt{\theta}{\varphi}) \\
&\lesssim \int_{-\pi/2}^{\pi/2} \left(1+\frac{|\theta|}{r}\right)^{-\tilde\beta_1}\int_0^{2\pi} \left(1+\frac{|\varphi-\varphi'|}{r |\tan \theta|}\right)^{-\tilde\beta_2} \,d\varphi \,\cos\theta \,d\theta \\
&\lesssim r \int_{-\pi/2}^{\pi/2}  \left(1+\frac{|\theta|}{r}\right)^{-\tilde\beta_1}  \,|\sin\theta| \,d\theta\\
&\lesssim r \int_{-\pi/2}^{\pi/2}  \left(1+\frac{|\theta|}{r}\right)^{-\tilde\beta_1}  \, |\theta| \,d\theta\lesssim r^3  \lesssim V(z',r).
\end{split}\]
where we used the fact that $|\tan \theta| \geq |\tan \theta'|$ and $|\theta-\theta'| \simeq |\theta|$, since $|\theta'| \leq |\theta|/2$.

To estimate $\cI_3$, in the case $|\theta'|\leq \pi/4$ we decompose $\beta$  as above and get
\[\begin{split}
\cI_3
&\lesssim \int_{\cS_3}  \left(1+\frac{|\theta-\theta'|}{r}\right)^{-\tilde\beta_1}  \left(1+ \frac{|\varphi-\varphi'|}{r |\tan \theta'|}\right)^{-\tilde\beta_2} \,d\meas(\spnt{\theta}{\varphi}) \\
&\lesssim r |\tan \theta'| \int_{-\pi/2}^{\pi/2} \left(1+\frac{|\theta-\theta'|}{r}\right)^{-\tilde\beta_1}  \,d\theta\\
&\simeq r^2 | \theta'| \lesssim V(z',r),
\end{split}\]
where we used the fact that $|\tan \theta| \lesssim |\tan \theta'| \simeq |\theta'|$, since $|\theta|/2 \leq |\theta'| \leq \pi/4$.

Finally, we are left with the estimate of $\cI_3$ in the case $|\theta'|>\pi/4$.
Here we can use \eqref{eq:rho-dist-far} to conclude that
\[
\cI_3  \lesssim \int_{\cS_3} (1+\dist_R(z,z')/r)^{-\beta} \,d\meas(z) \lesssim r^2 \simeq V(z',r),
\]
since $r< 1$ and $\beta>2$ (cf.\ \cite[Lemma 4.4]{DOS}).
\end{proof}

We now have  all the ingredients to prove our main result, Theorem \ref{thm:mainmain}.
Actually, given the above estimates, the proof reduces to standard arguments, that can be found in several places in the literature (see, in particular, \cite{He,CoS,DOS,MSi}). On the other hand, due to the particular combination of features of the manifold and operator under consideration (e.g., $\opL$ has a discrete spectrum, it is not group-invariant and the associated topological and homogeneous dimensions differ), there seems to be no existing result that can be immediately applied to $\opL$. Therefore, for the reader's convenience, we give a sketch of the proof, mainly following the scheme described in \cite[Sections 3 and 4]{M}, to which we refer for additional details.

Note that the $L^p$-boundedness in Theorem \ref{thm:mainmain}\ref{en:mainmain_mh} is clearly a consequence of the weak type $(1,1)$ bound by Marcinkiewicz interpolation and taking adjoints (indeed the right-hand side of \eqref{eq:mainmain_mh_wt11} is invariant under conjugation of $F$ and majorizes $\sup_{\lambda \geq 0} |F(\lambda)|$ by Sobolev's embedding). The change of variables $\lambda \mapsto \sqrt{\lambda}$ on the spectral side then shows that Theorem \ref{thm:mainmain} reduces to the following statement.

\begin{theorem}\label{thm:main}
Let $d = 2$ be the topological dimension of $\sfera$. Let $s > d/2$.
\begin{enumerate}[label=(\roman*)]
\item\label{en:main_compact} For all continuous functions $F : \RR \to \CC$ supported in $[1/2,1]$,
\[
\sup_{t>0} \| F(t \sqrt{\opL}) \|_{1 \to 1} \leq C_s \, \|F\|_{\Sob{2}{s}}.
\]
\item\label{en:main_mh} For all bounded Borel functions $F : \RR \to \CC$ such that $F|_{(0,\infty)}$ is continuous,
\begin{equation}\label{eq:main_mh_wt11}
\|F(\sqrt{\opL}) \|_{L^1 \to L^{1,\infty}} \leq C_s \, \sup_{t \geq 0} \| F(t \cdot) \, \eta \|_{\Sob{2}{s}}.
\end{equation}
\item\label{en:main_br} Let $p \in [1,\infty]$. If $\delta > (d-1)|1/2-1/p|$, then the Bochner--Riesz means $(1-t\opL)_+^\delta$ of order $\delta$ are bounded on $L^p(\sfera)$ uniformly in $t \in (0,\infty)$.
\end{enumerate}
\end{theorem}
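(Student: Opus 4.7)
The plan is to establish part \ref{en:main_compact} first, from which parts \ref{en:main_mh} and \ref{en:main_br} will follow by the now-standard machinery for sharp multiplier theorems on spaces of homogeneous type developed in \cite{He,CoS,DOS,MSi,M}. For part \ref{en:main_mh}, one decomposes a bounded Borel $F$ with finite right-hand side in \eqref{eq:main_mh_wt11} as $F(0) + \sum_j F_j$, where $F_j = F \cdot \eta(2^{-j}\cdot)$ is supported in a dyadic annulus, applies part \ref{en:main_compact} to each $F_j$, and combines the resulting dyadic $L^1 \to L^1$ bounds with Calder\'on--Zygmund theory on $(\sfera, \dist, \meas)$: the necessary heat regularisation and on-diagonal kernel control are supplied by Corollary \ref{cor:heatkernel}. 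Part \ref{en:main_br} follows by splitting the Bochner--Riesz multiplier dyadically around its singularity at $\lambda = 1$ to reduce it to part \ref{en:main_mh} for $\delta > (d-1)/2$, and then interpolating via complex interpolation with the trivial $L^2$-bound at $\delta = 0$ to recover the full range of $p$.

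\textbf{Proof of part \ref{en:main_compact}.} Let $F$ be continuous, supported in $[1/2,1]$, with $\|F\|_{\Sob{2}{s}} < \infty$ for some $s > d/2 = 1$. The goal is to bound $\sup_{z' \in \sfera} \|\Kern_{F(t\sqrt{\opL})}(\cdot, z')\|_{L^1(\sfera)}$ by $C_s \|F\|_{\Sob{2}{s}}$ uniformly in $t > 0$. The three ingredients I will combine are: (a) a Littlewood--Paley decomposition $F = \sum_{k \geq 0} F_k$ in the Fourier variable, with $\widehat{F_k}$ supported in $\{|\xi| \simeq 2^k\}$ for $k \geq 1$, which by Bernstein-type arguments yields the $L^\infty$-control $\|F_k\|_\infty \leq C\, 2^{-k(s-1/2)} \|F\|_{\Sob{2}{s}}$; (b) finite propagation speed for $\cos(\xi\sqrt{\opL})$ (Section \ref{ss:sfera}), which via Fourier inversion forces the kernel of $F_k(t\sqrt{\opL})$ to be supported in $\{(z,z') : \dist(z,z') \lesssim t \cdot 2^k\}$; (c) the weighted Plancherel-type estimate of Proposition \ref{prp:weighted_plancherel}, used in tandem with the weight integrability from Lemma \ref{lem:weight}. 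Concretely, for each $k$, Cauchy--Schwarz combined with Lemma \ref{lem:weight} (taking $\alpha \in (0,1/2)$ and $\beta > 3/2 - \alpha$) reduces the $L^1$-norm of the $k$-th kernel piece to a weighted $L^2$-norm of the form $\vvvert \Kern_{F_k(t\sqrt{\opL})} \vvvert_{2,\beta,\alpha,r_k}$ at a suitable scale $r_k$; Proposition \ref{prp:weighted_plancherel}, combined with doubling of $\meas$ and a truncation argument compensating for the noncompact spectral support of $F_k$, then controls this quantity in terms of $\|F_k\|_\infty$. Summing the geometric series in $k$ yields the required bound provided $s > 1$.

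\textbf{Main obstacle.} The crucial technical point is to ensure that the threshold extracted from this scheme is the topological-dimensional $s > d/2 = 1$, rather than the easier homogeneous-dimensional $s > Q/2 = 3/2$ that a naive Plancherel argument with $\alpha = 0$ would give. The improvement depends on the fact that $\alpha$ is allowed to range in $(0,1/2)$ in both Proposition \ref{prp:weighted_plancherel} and Lemma \ref{lem:weight}, reflecting the one-codimensional nature of the sub-elliptic singular set of $\opL$: the weight $\weight_r$, sensitive to distance from the equator $\equat$, efficiently encodes the gap between the topological and homogeneous dimensions. Striking the correct balance between the parameters $\alpha$ and $\beta$, the scale $r_k$, the doubling factors $V(z', r_k)/V(z', t)$, and the frequency-concentration gain $2^{-k(s-1/2)}$ of the Fourier-localised pieces $F_k$, so as to produce a summable series precisely for $s > d/2$, is where the full strength of the refined pointwise bounds for spherical harmonics proved in Section \ref{s:bounds} — the backbone of Proposition \ref{prp:weighted_plancherel} — is actually exploited.
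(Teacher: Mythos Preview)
Your overall architecture is sound, and parts \ref{en:main_mh} and \ref{en:main_br} are handled correctly in spirit. However, there is a genuine gap in your argument for part \ref{en:main_compact}: as written, it only yields $s > Q/2 = 3/2$, not the sharp $s > d/2 = 1$.

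Here is the accounting. With $\widehat{F_k}$ supported at $|\xi| \simeq 2^k$, finite propagation speed gives kernel support in $\{\dist \lesssim t\,2^k\}$, so passing from $\vvvert \cdot \vvvert_{2,0,\alpha,t}$ to $\vvvert \cdot \vvvert_{2,\beta,\alpha,t}$ costs $2^{k\beta}$. Proposition \ref{prp:weighted_plancherel} then bounds $\vvvert \Kern_{F_k(t\sqrt\opL)} \vvvert_{2,0,\alpha,t}$ by (after truncation) $\|F_k\|_\infty \lesssim 2^{-k(s-1/2)}\|F\|_{\Sob{2}{s}}$, exactly as you state. The Cauchy--Schwarz step with Lemma \ref{lem:weight} requires $2\alpha+2\beta>3$ with $2\alpha<1$, i.e.\ $\beta>1$. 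The resulting series $\sum_k 2^{k\beta}\,2^{-k(s-1/2)}$ converges only for $s>\beta+1/2>3/2$. The half-derivative loss comes precisely from estimating $\|F_k(N\cdot)\|_{N,2}$ by $\|F_k\|_\infty$ rather than by (essentially) $\|F_k\|_{L^2}$; Bernstein costs you $2^{k/2}$ there.

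The paper closes this gap differently: instead of a Fourier-side Littlewood--Paley decomposition of $F$, it splits $F = (\xi*F) + (F-\xi*F)$ with a mollifier $\xi$ adapted to the spectral mesh $1/N$. For the mollified part, it first proves an ``easy'' estimate $\vvvert \Kern \vvvert_{2,\beta,\alpha,N^{-1}} \lesssim \|F(N\cdot)\|_{\Sob{2}{\beta+\alpha+1/2+\epsilon}}$ from heat-kernel bounds alone, then \emph{interpolates} it against the weighted Plancherel estimate $\vvvert \Kern \vvvert_{2,0,\alpha,N^{-1}} \lesssim \|F(N\cdot)\|_{L^2}$ from Proposition \ref{prp:weighted_plancherel}. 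The interpolation yields $\vvvert \Kern \vvvert_{2,\beta,\alpha,N^{-1}} \lesssim \|F(N\cdot)\|_{\Sob{2}{\beta+\epsilon}}$: the $\alpha$-weight is now free on the right-hand side, and one needs only $\beta<s$ with $\alpha+\beta>Q/2$, which is achievable for any $s>1$. The remainder $F-\xi*F$ is handled separately via \cite[Proposition 4.6]{DOS}, exploiting that the mollifier removes Fourier modes beyond the spectral resolution. If you wish to salvage your decomposition, you must control $\|F_k(N\cdot)\|_{N,2}$ by $\|F_k\|_{L^2}$ rather than $\|F_k\|_\infty$, which forces you to confront the interplay between the band-limit $2^k$ and the mesh $N\simeq 1/t$---essentially reinventing the mollifier step.
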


\begin{proof}
By Corollary \ref{cor:heatkernel} and \cite[Theorems 1 and 4]{S}, the heat kernel $\Kern_{\exp(-t\opL)}$ has Gaussian-type heat kernel bounds: there exists $b \in (0,\infty)$ such that
\[
|\Kern_{\exp(-t\opL)}(z,z')| \leq C V(z',t^{1/2})^{-1} \exp(-b\dist(z,z')^2/t)
\]
for all $t \in (0,\infty)$ and $z,z' \in \sfera$.

Hence we can apply \cite[Theorem 6.1]{M} to obtain that, for all $\epsilon > 0$, all $\beta \geq 0$, all $R \in (0,\infty)$ and all $F : \RR \to \CC$ supported in $[-R^2,R^2]$,
\begin{align}
\vvvert \Kern_{F(\opL)} \vvvert_{2,\beta,0,R^{-1}} &\leq C_{\beta,\epsilon} \|F(R^2 \cdot)\|_{\Sob{\infty}{\beta+\epsilon}}, \label{eq:nswl2bd} \\
\| F(\opL) \|_{L^1(\sfera) \to L^1(\sfera)} &\leq C_{\epsilon} \|F(R^2 \cdot)\|_{\Sob{\infty}{Q/2+\epsilon}}  \label{eq:nsl1bd},
\end{align}

Set $A_t = \exp(-t^2\opL)$ if $t \in [0,\infty)$ and $A_t = 0$ if $t = \infty$. From \eqref{eq:nswl2bd} we deduce that, for all $t \in [0,\infty]$, all $\epsilon > 0$, all $\beta \geq 0$, all $R \in (0,\infty)$ and all $F : \RR \to \CC$ supported in $[R/16,R]$,
\[
\vvvert \Kern_{F(\sqrt{\opL}) (1-A_t)} \vvvert_{2,\beta,0,R^{-1}} \leq C_{\beta,\epsilon} \|F(R \cdot)\|_{\Sob{\infty}{\beta+\epsilon}} \min \{1,(Rt)^2 \}.
\]
Hence, if $\xi \in C_c((-1/16,1/16))$ is the mollifier defined as in \cite[eq.\ (18)]{M}, by Young's inequality we obtain that, for all $t \in [0,\infty]$, all $\epsilon > 0$, all $\beta \geq 0$, all $R \in (0,\infty)$ and all $F : \RR \to \CC$ supported in $[R/8,7R/8]$,
\[
\vvvert \Kern_{(\xi* F)(\sqrt{\opL})(1-A_t)} \vvvert_{2,\beta,0,R^{-1}} \leq C_{\beta,\epsilon} \|F(R \cdot)\|_{\Sob{\infty}{\beta+\epsilon}} \min \{1,(Rt)^2 \}.
\]
In particular, by \eqref{eq:weightsineq} and Sobolev's embedding, for all $t \in [0,\infty]$, all $\epsilon > 0$, all $\alpha,\beta \geq 0$, all $N \in \NN \setminus \{0\}$ and all $F : \RR \to \CC$ supported in $[N/8,7N/8]$,
\begin{equation}\label{eq:we1}
\vvvert \Kern_{(\xi*F)(\sqrt{\opL})(1-A_t)} \vvvert_{2,\beta,\alpha,N^{-1}} \leq C_{\alpha,\beta,\epsilon} \|F(N \cdot)\|_{\Sob{2}{\beta+\alpha+1/2+\epsilon}} \min \{1,(Nt)^2 \}.
\end{equation}

On the other hand, by Proposition \ref{prp:weighted_plancherel}, for all $t \in [0,\infty]$, all $\alpha \in [0,1/2)$, all $N \in \NN \setminus \{0\}$ and all $F : \RR \to \CC$ supported in $[N/16,N]$,
\[
\vvvert \Kern_{F(\sqrt{\opL}) (1-A_t)} \vvvert_{2,0,\alpha,N^{-1}} \leq C_{\alpha} \|F(N \cdot)\|_{N,2} \min \{1,(Nt)^2 \}.
\]
Hence, by \cite[eq.\ (4.9)]{DOS}, for all $t \in [0,\infty]$, all $\alpha \in [0,1/2)$, all $N \in \NN \setminus \{0\}$ and all $F : \RR \to \CC$ supported in $[N/8,7N/8]$,
\begin{equation}\label{eq:we2}
\vvvert \Kern_{(\xi*F)(\sqrt{\opL})(1-A_t)} \vvvert_{2,0,\alpha,N^{-1}} \leq C_{\alpha} \|F(N \cdot)\|_{L^2} \min \{1,(Nt)^2 \}.
\end{equation}

Interpolation of \eqref{eq:we1} and \eqref{eq:we2} gives that, for all $t \in [0,\infty]$, all $\epsilon > 0$, all $\alpha \in [0,1/2)$, all $\beta \geq 0$, all $N \in \NN \setminus \{0\}$ and all $F : \RR \to \CC$ supported in $[N/4,3N/4]$,
\[
\vvvert \Kern_{(\xi*F)(\sqrt{\opL})(1-A_t)} \vvvert_{2,\beta,\alpha,N^{-1}} \leq C_{\alpha,\beta,\epsilon} \|F(N \cdot)\|_{\Sob{2}{\beta+\epsilon}} \min \{1,(Nt)^2 \}.
\]
By \eqref{eq:weightsint} and H\"older's inequality we then deduce that, for all $r \in [0,\infty)$, all $t \in [0,\infty]$, all $s > d/2$, all $\epsilon \in [0,s-d/2)$,  all $N \in \NN \setminus \{0\}$ and all $F : \RR \to \CC$ supported in $[N/4,3N/4]$,
\begin{equation}\label{eq:mollified_l1}
\begin{split}
\esssup_{z' \in \sfera} &\int_{\sfera \setminus B(z',r)} |\Kern_{(\xi*F)(\sqrt{\opL})(1-A_t)}(z,z')| \,d\meas(z) \\
 &\leq (1+Nr)^{-\epsilon} \vvvert \Kern_{(\xi*F)(\sqrt{\opL})(1-A_t)} \vvvert_{1,\epsilon,0,N^{-1}} \\
 &\leq C_{s,\epsilon} (1+Nr)^{-\epsilon} \vvvert \Kern_{(\xi*F)(\sqrt{\opL})(1-A_t)} \vvvert_{2,\beta,\alpha,N^{-1}} \\
 &\leq C_{s,\epsilon} (1+Nr)^{-\epsilon} \|F(N \cdot)\|_{\Sob{2}{s}} \min \{1,(Nt)^2 \},
\end{split}
\end{equation}
where $\alpha \in [0,1/2)$ and $\beta \in [0,\infty)$ were chosen so that $\beta < s$ and $\alpha +\beta -\epsilon > Q/2$.

On the other hand, if $D$ is the $\dist$-diameter of $\sfera$, by \eqref{eq:weightsint}, H\"older's inequality, Proposition \ref{prp:weighted_plancherel} and \cite[Proposition 4.6]{DOS},  for all $s > d/2$, all $\epsilon \in [0,\min\{s-d/2,1/2\})$, all  $N \in \NN \setminus \{0\}$ and all $F : \RR \to \CC$ supported in $[N/4,3N/4]$,
\begin{equation}\label{eq:nomollified_l1}
\begin{split}
\| (F-\xi*F)(\sqrt{\opL}) \|_{1 \to 1} &= \vvvert \Kern_{(F-\xi*F)(\sqrt{\opL})} \vvvert_{1,0,0,N^{-1}} \\
&\leq C_{s,\epsilon} \vvvert \Kern_{(F-\xi*F)(\sqrt{\opL})} \vvvert_{2,\beta,\alpha,N^{-1}} \\
&\leq C_{s,\epsilon} (1+ND)^\beta \vvvert \Kern_{(F-\xi*F)(\sqrt{\opL})} \vvvert_{2,0,\alpha,N^{-1}} \\
&\leq C_{s,\epsilon} N^\beta \| (F-\xi*F)(N \cdot) \|_{N,2} \\
&\leq C_{s,\epsilon} N^{-\epsilon} \|F(N \cdot)\|_{\Sob{2}{\epsilon+\beta}} \\
&\leq C_{s,\epsilon} N^{-\epsilon} \|F(N \cdot)\|_{\Sob{2}{s}},
\end{split}
\end{equation}
where $\alpha \in [0,1/2)$ and $\beta \in (d/2,\infty)$ were chosen so that $\epsilon + \beta < \min\{Q/2,s\}$ and $\alpha +\beta > Q/2$.

Finally, observe that the only eigenvalue of $\sqrt{\opL}$ lying in $(-\infty,1)$ is $0$. Hence, for all $F : \RR \to \CC$ supported in $(-\infty,1)$,
\begin{equation}\label{eq:trivial_l1}
\|\Kern_{F(\sqrt{\opL})}\|_{1 \to 1} = C |F(0)|.
\end{equation}

Combining \eqref{eq:mollified_l1} (applied with $t=\infty$, and $\epsilon = r=0$) and \eqref{eq:nomollified_l1} (applied with $\epsilon=0$) gives in particular that, for all $s > d/2$, all  $N \in \NN \setminus \{0\}$ and all $F : \RR \to \CC$ supported in $[N/4,3N/4]$,
\begin{equation}\label{eq:l1_compact}
\| F(\sqrt{\opL}) \|_{1 \to 1} \leq C_s \| F(N \cdot) \|_{\Sob{2}{s}}.
\end{equation}
This estimate, together with \eqref{eq:trivial_l1}, easily gives part \ref{en:main_compact}.

As for part \ref{en:main_mh},
since the right-hand side of \eqref{eq:main_mh_wt11} is essentially independent of the cut-off function $\eta$, we may assume that $\supp \eta \subseteq (1/4,1)$ and $\sum_{k \in \ZZ} \eta(2^k \cdot) = 1$ on $(0,\infty)$.
Then, by the use of the dyadic decomposition $F = \sum_{k \in \NN} \eta(2^{-k} \cdot) F $ and an application of \cite[Theorem 1]{DMc}, from \eqref{eq:mollified_l1} and \eqref{eq:nomollified_l1} we obtain that, for all $F : \RR \to \CC$ supported in $[1/2,\infty)$,
\begin{equation}\label{eq:l1weak}
\| F(\sqrt{\opL}) \|_{L^1 \to L^{1,\infty}} \leq C_s \sup_{k \in \NN} \| \eta \, F(2^k \cdot) \|_{\Sob{2}{s}}.
\end{equation}
Via a partition of unity subordinated to $\{(1/2,\infty),(-\infty,1)\}$, we can now combine \eqref{eq:l1weak} and \eqref{eq:trivial_l1} and obtain part \ref{en:main_mh}.

It remains to prove part \ref{en:main_br}. Define $\phi_w : \RR \to \CC$ by $\phi_w(\lambda) = (1-\lambda)_+^w$, and note that $\sup_{\lambda \in [0,\infty)} |\phi_w(\lambda)| \leq 1$ whenever $w \in \CC$, $\Re w \geq 0$. Hence $\phi_w(t\opL) = (1-t\opL)_+^w$ is bounded on $L^2(\sfera)$ with at most unit norm for all $t \in (0,\infty)$ and $w \in \CC$, $\Re w \geq 0$. Therefore, by complex interpolation \cite[Theorem V.4.1]{Stein-Weiss}, it is enough to prove that $\phi_w(t\opL)$ is bounded on $L^1(\sfera)$ uniformly in $t \in (0,\infty)$ whenever $w \in \CC$ and $\Re w > (d-1)/2$, with a bound that grows at most polynomially in $\Im w$ for any fixed value of $\Re w$.

This is easily obtained by splitting $\phi_w = \phi_w^{-} + \phi^0_w + \phi^+_w$ through a smooth partition of unity subordinated to $\{(-\infty,-1/2),(-1,1),(1/2,\infty)\}$. Indeed $\phi_w^-(t\opL) = 0$, since $\opL$ has nonnegative spectrum. Moreover $\phi_0^w \in C^\infty_c(\RR)$, with derivatives bounded polynomially in $|w|$, so the required bound for $\phi_w^0(t\opL)$ follows from \eqref{eq:nsl1bd}. Finally, $\phi_w^+ \in \Sob{2}{s}(\RR)$ whenever $\Re w > s-1/2$, so the required bound for $\phi_w^+(t\opL)$ follows from part \ref{en:main_compact}.
\end{proof}

\acknowledgments


\end{document}